\definecolor{mygray}{gray}{0.85}
\renewcommand{\leq}{\leqslant}
\renewcommand{\geq}{\geqslant}
\newcommand{\CC}{\mathbb{C}}
\newcommand{\Cscr}{\mathcal C}
\newcommand{\Dscr}{\mathcal D}
\newcommand{\Gscr}{\mathcal G}
\newcommand{\Sscr}{\mathcal S}
\newcommand{\Uscr}{\mathcal U}
\def\icl{\rm icl}
\def\dcl{\rm dcl}
\def\cl{\rm cl}
\def\acl{\rm acl}
\def\diag{\rm diag}
\DeclareMathOperator{\ddcl}{cl^d}
\DeclareMathOperator{\ddclM}{cl^d_M}
\def\subsection{\@startsection{subsection}{3}%
  \z@{.5\linespacing\@plus.7\linespacing}{.3\linespacing}%
  {\bfseries\centering}}
\def\subsubsection{\@startsection{subsubsection}{3}%
  \z@{.5\linespacing\@plus.7\linespacing}{.3\linespacing}%
  {\centering}}
\def\myfnt{\ifx\protect\@typeset@protect\expandafter\footnote\else\expandafter\@gobble\fi}
\newtheorem{theorem}{Theorem}[section]
\newtheorem{corollary}[theorem]{Corollary}
\newtheorem{definition}[theorem]{Definition}
\newtheorem{lemma}[theorem]{Lemma}
\newtheorem{claim}[theorem]{Claim}
\newtheorem{proposition}[theorem]{Proposition}
\newtheorem{example}[theorem]{Example}
\newtheorem{question}[theorem]{Question}
\newtheorem{fact}[theorem]{Fact}
\newtheorem{conclusion}[theorem]{Conclusion}
\newtheorem{remark}[theorem]{Remark}
\newtheorem{notation}[theorem]{Notation}
\newtheorem{definition/fact}[theorem]{Definition/Fact}
\def\bK{\mbox{\boldmath $K$}}
\def\bL{\mbox{\boldmath $L$}}
\def\Fscr{\mathcal F}
\def\Lscr{\mathcal L}
\newcommand{\pureindep}[1][]{%
  \mathrel{
    \mathop{
      \vcenter{
        \hbox{\oalign{\noalign{\kern-.3ex}\hfil$\vert$\hfil\cr
              \noalign{\kern-.7ex}
              $\smile$\cr\noalign{\kern-.3ex}}}
      }
    }\displaylimits_{#1}
  }
}
\newcommand{\indep}[2]{%
  \mathrel{
    \mathop{
      \vcenter{
        \hbox{%
\oalign{
\noalign{\kern-.3ex}\hfil$\vert$\hfil\cr
              \noalign{\kern-.7ex}
              $\smile$\cr\noalign{\kern-.3ex}
}
}
      }
}^{\!\!\!\!\!#2}_{\!\!\hspace{-0.1em}#1}
  }
}
\begin{document}

\begin{abstract}
A linear space is a system of points and lines such that any two distinct points determine a {\em unique} line; a Steiner $k$-system (for $k \geq 2$) is a linear space such that each line has size exactly $k$. Clearly, as a two-sorted structure, no linear space can be strongly minimal.  We formulate linear spaces in a (bi-interpretable) vocabulary $\tau$ with a single ternary relation $R$. We prove that for every integer $k$ there exist $2^{\aleph_0}$-many integer valued functions $\mu$ such that each $\mu$ determines a distinct strongly minimal Steiner $k$-system $\mathcal{G}_\mu$, whose algebraic closure geometry has all the properties of the {\em ab initio} Hrushovski construction. Thus each is a \mbox{counterexample to the Zilber Trichotomy Conjecture.}
\end{abstract}

\title{Strongly Minimal Steiner Systems I: Existence}
\thanks{The first author was partially supported (during a 2018 visit to Jerusalem) by European Research Council grant 338821 and by Simons
travel grant G5402. The second author was partially supported by European Research Council grant 338821.}

\author{John Baldwin}
\address{Department of Mathematics, Statistics, and Computer Science, University of Illinois at Chicago, Chicago, USA.}
\email{jbaldwin@uic.edu}

\author{Gianluca Paolini}
\address{Department of Mathematics ``Giuseppe Peano'', University of Torino, Via Carlo Alberto 10, 10123, Italy.}
\email{gianluca.paolini@unito.it}

\date{\today}
\maketitle



%

\section{Introduction}



Zilber conjectured  that every strongly minimal set was  (essentially) bi-\hspace{0.0000001cm}interpretable  either with a strongly minimal set whose associated $\acl$-geometry  was trivial or locally modular, or with an algebraically closed field.
Hrushovski
\cite{Hrustrongmin} refuted that conjecture by a seminal extension of the Fra{\" \i}ss\'{e}
construction of $\aleph_0$-categorical theories as `limits' of finite structures to construct strongly minimal (and so $\aleph_1$-categorical) theories. In this paper we modify Hrushovski's method to construct $2^{\aleph_0}$-many strongly minimal
Steiner systems that also violate Zilber's conjecture. The examples arising from Hrushovski's construction have been seen as pathological, and there has been little work exploring the actual theories. The new examples that we construct here are infinite analogs of concepts that have
been central to  combinatorics for 150 years. But most of these investigations (e.g.\! \cite{BatBe, ColRosa, ReidRosa}) focus on finite systems.

Our construction of strongly minimal linear spaces via a Hrushovski construction might lead in
 two directions: (i) explore infinite Steiner systems investigating
 combinatorial notions appearing in such papers  as \cite{Cameronlinsp, CameronWebb,GanWer,Steinpnas}; (ii) search for further mathematically interesting
  strongly minimal sets with exotic geometries.
 This paper is an essential prerequisite for the sequel
  \cite{BaldwinsmssII}, where we address both issues by showing for $k>3$ that the examples here are {\em essentially unary\footnote{There is no parameter-free definable function depending on more than one variable.}}, expand the techniques used here to construct strongly minimal quasigroups, and extend the combinatorial analysis of \cite{CameronWebb}
  to those quasigroups.
%

Our construction combines methods from the theory of linear spaces/\hspace{0.0000001cm}combinatorics and model theory. 
 A linear space (Definition~\ref{linspace}) is a system of points and lines such that any two points determine a {\em unique} line. A Steiner $k$-system is a linear space  such that all lines have size $k$.
 We explain strong minimality below, and explore  its connection with Steiner systems in Section~\ref{sm}. 


The key ingredient of our construction is the development in \cite{Paoliniwstb} of a new model theoretic rank
function inspired by Mason's $\alpha$-function \cite{mason}, which arose in matroid theory. Using this new rank to produce a strongly minimal set requires a variant on the Hrushovski
construction \cite{Hrustrongmin} with several new features.

This is the first of a series of papers exploring these examples.
Here are the main results of this paper; they depend on definitions explained below.

\begin{itemize}
\item {\bf Theorem~\ref{bint}}: The one-sorted (Definition~\ref{taulin}) and two-sorted (Definition~\ref{linspace}) notions of linear space are bi-\hspace{0.0000001cm}interpretable.
\item {\bf Theorem~\ref{designcon}(2)}: For each  $k$, with $3 \leq k < \omega$, there are  $2^{\aleph_0}$-many strongly minimal theories $T_\mu$ (depending\footnote{The theory of course depends
   on the line length $k$; $k$ is coded by $\mu$ so we suppress the~$k$.} on an integer valued function $\mu$) of infinite linear spaces in the one-sorted vocabulary $\tau$ that are Steiner $k$-systems.
    \item {\bf Conclusion~\ref{getflat}}:
     Each theory $T_\mu$ admits weak\footnote{In view of Lemma~\ref{nlf} and Fact~\ref{thehammer}  our argument may, in very special cases, require naming finitely many constants to guarantee that $\acl(\emptyset)$ is infinite.} elimination of imaginaries, its geometry   is not locally modular, but it is CM-trivial and so it does not interpret a field. Thus, it violates Zilber's conjecture.
 \end{itemize}

 The last two results make sense only in the one-sorted vocabulary~$\tau$ (see below for a more detailed explanation of this).  This phenomena is symptomatic of the interplay among model theory, finite geometries and matroid theory.  Notions in these areas are `almost' the same. Sometimes `almost' is good enough and sometimes not.  The same intuitive structures are formalized in different vocabularies and in different logics depending on the field. Thus, the first task of this paper is to explain this interaction.
The first main result addresses this issue; further refinements on bi-interpretability appear in Section~\ref{12sort} and even more in \cite{BaldwinsmssII}.

We investigate here a new case where the structures have classical roots. Much of the current research on strongly minimal theories (as opposed for example to the strongly minimal sets discovered in differentially closed fields) focuses on classifying the attached $\acl$-geometry. Work of Evans, Ferreira,  Hasson, and Mermelstein \cite{EvansFerI,EvansFerII,MermelHas,Mermelthesis} suggests that up to arity
or more precisely, purity, (and modulo some apparently natural conditions\footnote{In \cite{EvansFerI,EvansFerI},  the class of finite structures is restricted only  by the dimension function and properties of $\mu$, that satisfy several technical conditions, which don't hold in some constructions in \cite{BaldwinsmssII}, as opposed to such axioms as `two points determine a line' here or the existence of a quasigroup structure in \cite{BaldwinsmssII}.})
any two $\acl$-geometries associated with strongly minimal Hrushovski constructions are
locally isomorphic.    This analysis is orthogonal to our program, which focuses on the particular strongly minimal theories constructed.

The naive observation that a plane has Morley rank 
$2$ motivated the construction in \cite{Baldwinasmpp} of an $\aleph_1$-categorical non-Desarguesian {\em projective plane} of Morley rank exactly $2$. The novelty of that result is the failure of the Desarguesian axiom; while the projective plane over $\CC$  has Morley rank $2$,  it is  `field-like' and so Desarguesian.
The result here complements  that example, weakening `projective plane' to `plane' (a linear space which admits the structure of a simple rank 3 matroid) while strengthening Morley rank $2$ to strongly minimal
(i.e.\! Morley rank $1$ and Morley degree $1$). And the examples
turn out to be Steiner systems.

%


A key difference from the finite situation is that   $k$-Steiner systems of finite cardinality $v$ occur only under strict number theoretic conditions on $v$ and $k$.  In contrast, for every $k$, we construct theories with countably many models in $\aleph_0$ and one in each uncountable power that are all Steiner $k$-systems.
But the number theory reappears when we attempt to find algebraic structures associated with these geometries. One goal is to coordinatize the Steiner systems by nicely behaved algebras. A substantial literature \cite{MR0094404,Steinpnas,GanWer,GanWer2} builds a correspondence between $k$-Steiner systems and certain varieties of universal algebras. But while this correspondence is  a
bi-interpretation for $k =3$, it does not rise to that level in general.  Indeed, for $k>3$, we show  \cite{BaldwinsmssII} that none of the strongly minimal Steiner systems constructed here interpret a quasi-group\footnote{A quasigroup is a structure $(A,*)$ such  that specification of any  two of $x, y, z$ in the equation $x * y = z$ determines
the third uniquely. This roughly corresponds to the current usage of groupoid. But, in the literature mentioned in the paragraph a groupoid is an algebra with a single binary function. }. We  also prove there  that for $q$ a prime power, and $V$ an appropriate variety, for each of our theories $T_\mu$ there is a theory $T_{\mu,V}$ of a strongly minimal quasigroup  in $V$ that interprets a $q$-strongly minimal Steiner system.
%
%

As already mentioned, most of the literature on linear spaces focuses on finite structures, but Cameron \cite{Cameronlinsp} asserts:

 \begin{quote}
 There is no theory of infinite linear spaces comparable to the enormous amount
known about finite linear spaces. This is due to two contrasting factors. First,
techniques which are crucial in the finite case (notably counting) are not available.
Second, infinite linear spaces are too easy to construct; instead of having to force our
configurations to ‘close up’, we just continue adding points and lines infinitely often!
The result is a proliferation of examples without any set of tools to deal with them.
\end{quote}

%
We import non-trivial constructions from model theory to build interesting linear spaces. Since we are interested in finding theories rather than structures, we construct families of similar Steiner systems that are similar both combinatorially and model theoretically. Perhaps this technique could become a tool in
studying infinite linear spaces as stability has already influenced graph
theory \cite{MalShreg}.

Studying the $(a,b)$-cycle graph \cite{CameronWebb} associated with Steiner triple systems (Definition~\ref{abgraph}), already yielded a perspicuous proof that we have constructed continuum many theories (Corollary~\ref{contmu}). In \cite{BaldwinsmssII} we extend the notion of graph cycle from Steiner triple systems to Steiner $q$-systems, for $q$ a prime power, and
produce examples of $T_\mu$ that have only finite cycles (called paths in the more general situation) in the prime model but infinite cycles in all others. 
 By cutting away
from the class of finite models some that have low $\delta$-rank, it is fairly
easy to guarantee that all models of $T_\mu$ are $2$-transitive.  By making
a relatively large such cut Hrushovski \cite[Example 5.2]{Hrustrongmin} produced
an example, which as a side effect, is a Steiner triple system. But this construction does not generalize uniformly, as ours does, to get Steiner
$k$-systems for larger $k$. With less extreme surgery we find in \cite{BaldwinsmssII} theories of $q$-Steiner system such that every model is $2$-transitive and thus the path graph is uniform in a sense inspired by \cite{CameronWebb}.

There have been a number of papers that use model theoretic techniques and,
in at least one case, the Hrushovski construction, to investigate linear spaces and Steiner systems. Our approach differs by invoking a predimension function
inspired by Mason's $\alpha$-function, and focusing on the combinatorial
{\em consequences of strong minimality} by investigating  the family of similar
(elementarily equivalent) structures of arbitrary cardinality arising from a
 particular strongly minimal $k$-Steiner system.  
  In contrast, Evans \cite{Evanssteiner} constructs
 Steiner triple systems using a variant of the Hrushovski construction without discussing their
  stability class.  At the opposite end of the stability spectra from
  our result, Barbina and Casanovas \cite{BarbinaCasa} find existentially
  closed Steiner triple systems that are $TP2$ and $NSOP_1$ by a traditional Fra{\" \i}ss\'{e} construction. Remark~\ref{onbc} compares their example with ours in more detail.
   Between these
  extremes, Hytinnen and Paolini \cite{HyttinenPaolinifree} show that the Hall construction of free projective planes yields a strictly stable theory.   Conant and Kruckman \cite{ConKr} find  an existentially closed
projective plane and prove it is $NSOP_1$ but not simple.  Their construction involves a generalized Fra{\" \i}ss\'{e} construction for the existential completeness as well as the Hall construction.

 Thus, there are four techniques that construct infinite linear spaces in a range of stability classes:  taking all extensions in a given universal class but insisting on finite amalgamation  in a standard Fra{\" \i}ss\'{e} construction \cite{BarbinaCasa}, building one chain of models
  carefully \cite{HyttinenPaolinifree}, combining these two methods but allowing the amalgam of finite structures to be countable \cite{ConKr},   and, as here, restricting the amalgamation class to guarantee a
   well-behaved $\acl$-geometry.
%
%


Section~\ref{sm} provides background on strong minimality and linear spaces, and proves the bi-interpretablity between the one and two-sorted approach. Sections~\ref{context} and \ref{primgood} lay out the distinctions in the basic theory between the general Hrushovski approach and the specific dimension function for linear spaces studied here. In Section 5 we prove the main existence theorem for strongly minimal Steiner systems and in Section~\ref{fcon} we discuss  the connection with recent work on the model theory of Steiner systems and expound the underlying properties which show that our examples have the usual `geometric' properties of Hrushovski constructions. We thank the referee for a very helpful report.

\section{Strong Minimality, Linear Spaces, Matroids and planes}\label{smaetal}



	The goals of this paper and the sequel are to construct  strongly minimal linear spaces, in fact, Steiner systems and to investigate some of the relevant connections between model theory and combinatorics.
%
	 In this section we describe strong minimality on the one hand, and the combinatorial notions of linear space, matroid, and some notions from design theory on the other.  The sophisticated study of strongly minimal sets depends on the general framework of one-sorted first-order logic; linear systems are usually studied in a two-sorted first-order logic, while matroids are rarely formalized (See Section~\ref{sec_matroids}.).
 We  explore here the role of and translations between these various  `formalisms'.   Most of our work takes place in the following context:


\begin{definition}[Linear Spaces in $\tau$]\label{taulin} Let $\tau$ contain a single ternary relation symbol $R$ which holds of  sets  of $3$ distinct elements in any order. $\bK^*$, the class of {\em linear  spaces},  consists  of the $\tau$-structures that satisfy:
any two distinct points determine a unique line when $R$ is interpreted as collinearity. That is, $R(x,y,z) \wedge R(x,y,w) \rightarrow R(x,w,z)$.
Each pair of elements is  regarded as lying on a (trivial) line; each non-trivial line is a maximal $R$-clique.
\newline
$\bK^*_0$ denotes the collection of finite structures in $\bK^*$.
\end{definition}

The switch from a $2$-sorted to a $1$-sorted formalism leads to some peculiar notation. In the two-sorted world, a line in $(M; P^M, L^M)$ can gain points when $M$ is extended.  In the one-sorted context a  line is a subset of the universe which is definable from any two points lying on it. But this definition is non-uniform. If the line is trivial (only two points) the definition is $x=a \vee x = b$; if the line is non-trivial  the definition is $R(a,b,x)$.
As a model $M$ is extended, not only may a line gain points, but the correct such definition can change.

\subsection{Strongly Minimal Theories}\label{sm}

A complete theory $T$ is strongly minimal if every model of $T$ is a strongly
minimal structure.   Prototypic examples of strongly minimal theories include
completions  of the pure theory of equality, vector spaces, and algebraically
 closed fields.

We define the {\em model theoretic  algebraic closure} of a set $A \subseteq M$ to be\footnote{$(\exists^{k!} x) \varphi(x,\overline{a})$ means that there are exactly
$k$-solutions of $\varphi(x,\overline{a})$; we similarly use $(\exists {}^{>k} x)$. These
 are abbreviations of first-order formulas.}:
$$\mathrm{acl}_M(A) =\{ b \in M : M \models \varphi(b,\overline{a}) \, \wedge \, \text{ for some $k$ } (\exists^{k!} x) \varphi(x,\overline{a})\},$$
where the $\varphi(x,\overline{a})$ vary over all formulas with parameters from $A$.
In any  strongly minimal structure $M$, the operator $\mathrm{acl}$ induces a matroid (pre-geometry) on
the subsets of $M$ (see e.g. \cite{BaldwinLachlansm}). This pre-geometry is infinite dimensional if $M$ is saturated.
  If $\mathrm{acl}_M(a) = \{a\}$, for every $a \in M$, then $(M, \mathrm{acl}_M)$ is a simple matroid (a combinatorial geometry).
Strong minimality imposes significant restrictions on the structure $M$ due to the following:

\begin{fact} \label{eliminf} If $M$ is strongly minimal, then for every formula
$\varphi(x,\overline{y})$, there is an integer $k = k_\varphi$ such that for any
 $\overline{a} \in M$, $(\exists {}^{>k_\varphi} x) \varphi(x,\overline{a})$ implies that there are
 infinitely many solutions of $\varphi(x,\overline{a})$, and thus finitely many solutions of $\neg\varphi(x,\overline{a})$.
 \end{fact}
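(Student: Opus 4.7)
The plan is a standard compactness argument that extracts a uniform bound from the ``finite or cofinite'' dichotomy built into strong minimality, by propagating a hypothetical unbounded sequence of finite-size witnesses into a single bad realization in an elementary extension.

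First I would contrapose: suppose no such bound $k_\varphi$ exists. Then for each $k \in \omega$ there is $\overline{a}_k \in M$ for which $\varphi(x,\overline{a}_k)$ has strictly more than $k$ solutions in $M$ but only finitely many (say exactly $n_k$, where $n_k > k$). Because $M$ is strongly minimal and $\varphi(M,\overline{a}_k)$ is finite, its complement $\neg\varphi(M,\overline{a}_k)$ is cofinite and hence (assuming $M$ infinite, which is the relevant case) infinite; so for every $k$ the same parameter $\overline{a}_k$ also witnesses $(\exists^{>k} x)\neg\varphi(x,\overline{a}_k)$.

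Next I would invoke compactness. Consider the partial type in the tuple of variables $\overline{y}$:
\[
p(\overline{y}) \;=\; \{(\exists^{>k} x)\,\varphi(x,\overline{y}) : k \in \omega\}\;\cup\;\{(\exists^{>k} x)\,\neg\varphi(x,\overline{y}) : k \in \omega\}.
\]
Every finite subset of $p$ is contained in $\{(\exists^{>K} x)\varphi(x,\overline{y}),(\exists^{>K} x)\neg\varphi(x,\overline{y})\}$ for some single $K$, and is realized in $M$ by $\overline{a}_K$ by the previous paragraph. Hence $p$ is finitely consistent with $\mathrm{Th}(M)$, so in a sufficiently saturated elementary extension $U \succ M$ there is a realization $\overline{a}^* \in U$ of $p$.

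Finally I would derive the contradiction from strong minimality of $U$. Since $\overline{a}^*$ realizes $p$, both $\varphi(U,\overline{a}^*)$ and $\neg\varphi(U,\overline{a}^*)$ contain more than $k$ elements for every $k$, hence both are infinite subsets of $U$. But $U \models \mathrm{Th}(M)$ and $T$ is strongly minimal, so the definable set $\varphi(U,\overline{a}^*)$ must be finite or cofinite in $U$; one of $\varphi(U,\overline{a}^*)$, $\neg\varphi(U,\overline{a}^*)$ is therefore finite, contradicting what we just established. Thus the required integer $k_\varphi$ exists, and the ``thus finitely many solutions of $\neg\varphi(x,\overline{a})$'' clause then follows immediately by applying strong minimality in $M$ to the cofinite set $\varphi(M,\overline{a})$. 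The only delicate step is the compactness set-up — one must be careful to put $\varphi$ \emph{and} $\neg\varphi$ into $p$ simultaneously, since a single realization with infinitely many $\varphi$-solutions is not yet a contradiction; pairing the two sides is what converts the failure of the uniform bound into a direct violation of the finite-or-cofinite dichotomy in the extension.
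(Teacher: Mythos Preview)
Your argument is correct and is essentially the same as the paper's: contrapose, apply compactness to the type $\{(\exists^{>k} x)\varphi(x,\overline{y}) \wedge (\exists^{>k} x)\neg\varphi(x,\overline{y}) : k<\omega\}$, realize it by some $\overline{a}^*$ in an elementary extension, and contradict strong minimality there. The paper gives only a one-sentence sketch, but your expanded version matches it step for step, including the key observation that both $\varphi$ and $\neg\varphi$ must be included in the type.
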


 This is an easy consequence of the compactness theorem: if the conclusion fails the collection of
 sentences $\{(\exists {}^{>k} x) \varphi(x,\overline{y}) \wedge  (\exists {}^{>k} x) \neg\varphi(x,\overline{y})\colon k< \omega\} $
 is finitely satisfiable and so
 realized by some $\overline{a}^*$ in an elementary extension $N$ of $M$, which contradicts {\em strong minimality}. This result allows us, by suppressing the dependence of $k$ on $\varphi$, to introduce the abbreviation $(\exists {}^{\infty} x) \varphi(x,\overline{a})$ for
$(\exists {}^{>k_\varphi} x) \varphi(x,{\overline{a}})$.
As, in our context, the second assertion implies the first, which is usually not first-order.

 Fact~\ref{eliminf} has an immediate consequence for any strongly minimal linear space, $(M,R)\in \bK^*$ (cf. Definition~\ref{taulin}),
 where all lines have at least 3 points:
 there can be no infinite lines. Suppose $\ell$ is an infinite line. Choose $A$ not on $\ell$. For each $B_i, B_j$ on $\ell$ the lines $AB_i$  and $AB_j$ intersect only in $A$. But each line $B_i$ has a point not on $\ell$ and not equal to $A$. Thus
 $\ell$ has an infinite definable complement, contradicting strong minimality.
More strongly, we observe:

\begin{fact}\label{bndlen} If $(M,R)$ is a strongly minimal linear space, then there exists an integer $k$ such that all lines have length at most $k$.
\end{fact}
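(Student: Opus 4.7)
The plan is to combine Fact~\ref{eliminf} with the observation just made (that no line in a strongly minimal linear space can be infinite) to extract a uniform bound.

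First, I would apply Fact~\ref{eliminf} to the specific formula $\varphi(x; y, z) := R(x, y, z)$, viewed as a formula in the single variable $x$ with parameter pair $\overline{a} = (y, z)$. This yields a single integer $k_\varphi$, depending only on $\varphi$ (and hence only on the vocabulary $\tau$ and the theory of $(M,R)$), such that for any parameters $a, b \in M$, if the set of solutions of $R(x, a, b)$ in $M$ has more than $k_\varphi$ elements, then it has infinitely many solutions.

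Next, I would invoke the preceding paragraph: in a strongly minimal linear space no line is infinite. Applied here, this says that for any distinct $a, b \in M$ the solution set of $R(x, a, b)$ (which, by Definition~\ref{taulin}, is exactly the non-trivial line through $a$ and $b$ minus $\{a,b\}$ if one exists, and is empty otherwise) must be finite. Combining this with the previous step, the solution set of $R(x, a, b)$ must in fact have at most $k_\varphi$ elements for every choice of parameters. Hence every non-trivial line has at most $k_\varphi + 2$ points (the two defining points together with the at most $k_\varphi$ other collinear points), while every trivial line has exactly $2$ points. Setting $k := k_\varphi + 2$ gives the desired uniform bound.

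I do not expect any real obstacle: the statement is essentially an immediate corollary of Fact~\ref{eliminf} once one has the non-existence of infinite lines. The only subtlety to be careful about is ensuring that the choice of $\varphi$ really does capture the notion of ``point on the non-trivial line through $a$ and $b$,'' which is handled by the axiom in Definition~\ref{taulin} guaranteeing uniqueness of the line through two points: if $a,b$ lie on a non-trivial line $\ell$, then $c \in \ell$ iff $R(a,b,c)$ or $c \in \{a,b\}$.
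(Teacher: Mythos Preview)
Your proposal is correct and follows essentially the same argument as the paper: apply Fact~\ref{eliminf} to the formula $R(x,y,z)$ and combine with the already-established absence of infinite lines. The paper's version is slightly terser (it does not spell out the $+2$ adjustment for the two base points), but the idea is identical.
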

   As, $R(x,y,z)$ means\footnote{We require any triple satisfying $R$ to be of distinct points.} $x,y,z$ are collinear, i.e.\! $x$ is on the
  line determined by $y,z$, applying Fact~\ref{eliminf} we see that there is
  $k = k_R$ such that $(\exists {}^{>k_R} x) R(x,a,b )$ implies the
  line through $a,b$ is infinite, which contradicts the preceding paragraph.  In particular, there can be no strongly minimal affine or projective plane, since in such planes the number  points on a line
  must equal the number of lines through a point ($+1$ in the finite affine case).
%



%



\subsection{(Families of) Linear Spaces} \label{linsp}

We begin with the  notion of linear space as expounded in \cite{BatBe}. We formalize this notion in the usual first-order two-sorted way. In  Definition~\ref{taulin} we provided a one-sorted formalization of linear spaces, and in Theorem~\ref{bint} we will prove that the two definitions are bi-interpretable.


\begin{definition}[Linear Spaces in $\tau^+$]\label{linspace}
A {\em linear space} is a structure $S$ for a vocabulary $\tau^+$ with unary predicates $P$ (points)  and $L$ (lines) and a binary relation $I$ (incidence) satisfying the following properties:
\begin{enumerate}[(A)]
\item any two distinct points lie on at exactly one line;
\item each line contains at least two points.
\end{enumerate}
$\bK^+$ denotes the collection of $\tau^+$-structures that are linear spaces.
\end{definition}


\begin{remark} We omit in Definition~\ref{linspace} the usual non-trivality condition that there are at least three points not on a common line. It will of course be true of the infinite structures that we construct, but allowing even the empty structure is
technically convenient.
\end{remark}



%
%


 While \cite{BatBe} deals almost exclusively with {\em finite} linear spaces, the definition extends (as the authors noted) to allow infinite spaces.
We pause to describe several different descriptions of linear spaces, most notably {\em pairwise balanced designs} (as defined in \cite{WilsonI}):
%

\begin{definition}[PBD]\label{pbddef} A finite design is a pair $(X,\Lscr)$ where $X$ is a finite set and $\Lscr$ is a family $\{B_i: i\in I\}$ of (not necessarily distinct) subsets (blocks) of $X$.
\begin{enumerate}[(1)]
\item For $v\geq 0$ and $\lambda > 0$ integers, and $K$ a set of positive integers, a design $(X,\Lscr)$ is a $(v,K,\lambda)$-PBD, {\em Pairwise Balanced  Design}, if and only if:
\begin{enumerate}[(a)]
\item $|X| = v$;
\item $|B_i| \in K$;
\item\label{(iii)} every two element subset of $X$ is contained in exactly $\lambda$ blocks $B_i$.
    \end{enumerate}
    \item A Pairwise Balanced Design is said to be a Steiner system if $\lambda = 1$ and $|K| = 1$ (i.e.\! all blocks have the same size).

            \end{enumerate}

    An infinite PBD is obtained by omitting the requirement that $v$ is finite.
   \end{definition}

%
%
 If $K = \{k\}$, we adopt the standard notation of
        Steiner $k$-system.

   Condition (1)(c) is read as asserting that the design is {\em pairwise balanced} with index $\lambda$.
    Any  finite linear space is a $(v,K,\lambda)$-PBD for some $K$ and with $\lambda = 1$.
    Fact~\ref{bndlen} gives (1) of the next theorem; (2) is a consequence of our main construction.


    \begin{theorem}\label{designcon} \begin{enumerate}[(1)]
    \item
    A strongly minimal infinite linear space in the vocabulary $\tau$ (cf. Definition~\ref{taulin}) is a $(v,K,1)$-PBD for some finite set of integers $K$.
    \item For each $3 \leq k < \omega$, we construct continuum-many strongly minimal infinite linear spaces in the vocabulary $\tau$ that are Steiner $k$-systems.

        \end{enumerate}
        \end{theorem}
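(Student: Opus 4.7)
\textbf{Part (1)} is nearly immediate from Fact~\ref{bndlen}. Let $(M,R) \in \bK^*$ be a strongly minimal infinite linear space. By Fact~\ref{bndlen} there is an integer $k$ bounding all line lengths, so the set $K$ of line sizes that actually occur is a finite subset of $\{2,\dots,k\}$. Taking $v = |M|$, $\lambda = 1$ (forced by the linear-space axiom that any two distinct points lie on a unique line), and letting the blocks be the non-trivial $R$-lines together with the two-element subsets, presents $M$ as an infinite $(v,K,1)$-PBD in the sense of Definition~\ref{pbddef}.

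\textbf{Part (2)} is the substantive content, and is the goal of the remaining sections of the paper. Fix $3 \leq k < \omega$. The plan is to run a Hrushovski amalgamation inside the subclass $\bK^*_{0,k} \subseteq \bK^*_0$ of finite linear spaces whose lines all have length at most $k$, equipped with the Mason-inspired predimension function $\delta$ of \cite{Paoliniwstb}. First I would verify that $(\bK^*_{0,k},\leq)$ is an amalgamation class, where $A \leq B$ means $\delta(A') \geq \delta(A)$ for every intermediate $A \subseteq A' \subseteq B$. The key combinatorial point is that given $A \leq B_0,B_1$, the \emph{free amalgam} (disjoint union over $A$, with no new collinearities outside $B_0 \cup B_1$) is still a linear space: no new pair of points is created whose line could conflict with an existing one, and the upper bound $k$ on line length prevents any forced merging. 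Next I would choose a ``good'' control function $\mu$ bounding the number of realizations of each minimal algebraic extension, restrict to the subclass $\bK^*_\mu$ respecting these bounds, and take the Fra\"{\i}ss\'{e}--Hrushovski generic limit $M_\mu$, with theory $T_\mu$.

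To prove $T_\mu$ is strongly minimal I would invoke the now-standard Hrushovski argument: definable subsets of $M_\mu$ are controlled by the finitely many $\delta$-types of minimal algebraic extensions, and the $\mu$-bounds cap their realizations. To force $M_\mu$ to be a Steiner $k$-system rather than merely a linear space with lines of size $\leq k$, I would arrange that every pair of points in $M_\mu$ lies on a complete $k$-line, by including in the class all one-step line-completion extensions (adding the missing points of a partial line). This extension is $\delta$-neutral, since the added points are algebraic over any two points on the line, so it is compatible with the strong-substructure framework and should not be penalized by $\mu$. Finally, to produce $2^{\aleph_0}$-many distinct theories one varies $\mu$; the $(a,b)$-cycle graph invariant of \cite{CameronWebb}, used in the announced Corollary~\ref{contmu}, supplies continuum-many elementary invariants to separate the $T_\mu$.

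The main obstacle, I expect, lies in calibrating $\delta$ and the control function $\mu$ so that the two demands are met simultaneously: strong amalgamation inside $\bK^*_\mu$ must genuinely produce a linear space (lines cannot merge and the axiom $\lambda = 1$ is a \emph{global} constraint, so amalgamation has to proceed along full lines rather than pointwise), while the line-completion moves that promote each pair to a full $k$-line must remain free, i.e.\ not counted against $\mu$, and must not destroy amalgamation. Identifying the correct form of $\delta$ so that line-completion counts as a ``white'' extension --- which is precisely the role played by a Mason-style coefficient --- will be the technical heart of the argument, and everything else (strong minimality, the continuum-many-theories count, elimination-of-imaginaries) should then follow along established Hrushovski-construction lines.
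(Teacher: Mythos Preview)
Your treatment of Part~(1) is fine and matches the paper exactly: the paper simply cites Fact~\ref{bndlen}.

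For Part~(2) you have the right architecture (Hrushovski construction with the predimension $\delta$ of Definition~\ref{defdelrank}, a control function $\mu$, and the cycle invariants of Lemma~\ref{2prim} to separate continuum-many theories), but the mechanism you propose for fixing line length is inverted, and this is not a cosmetic difference.

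You suggest restricting the base class to $\bK^*_{0,k}$ (lines of length $\leq k$) and then treating line-completion as a ``white'' extension exempt from $\mu$. Two things go wrong. First, your description of the free amalgam is not a linear space: if $b_0\in B_0\setminus A$ and $b_1\in B_1\setminus A$ both lie on a line $\ell$ based in $A$, the linear-space axiom \emph{forces} $R(b_0,b_1,a)$ for $a\in\ell\cap A$, so the amalgam must extend $\ell$ by points from both sides (this is exactly Definition~\ref{defcanam}). Hence the free amalgam of two structures with lines of length $\leq k$ can have a line of length $>k$, and $\bK^*_{0,k}$ is not closed under free amalgamation. Second, and more fundamentally, line-completion is a $0$-primitive extension---indeed it is \emph{the} prototypical good pair $\boldsymbol{\alpha}=(\{b_1,b_2\},a)$ of Notation~\ref{linelength}---and $0$-primitive extensions are precisely what $\mu$ must bound to obtain strong minimality. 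Declaring it ``not penalized by $\mu$'' removes the only handle on line length.

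The paper does the opposite: it does \emph{not} restrict $\bK_0$ by line length, and instead sets $\mu(\boldsymbol{\alpha})=k-2$. This single value simultaneously (i) bounds every line to length $k$ via $\Sigma^1_\mu$, (ii) forces identification rather than free amalgamation when a line would overflow (Case~A of Lemma~\ref{isoqe} and Case~1 of Conclusion~\ref{conclude}), and (iii) via $d$-closedness and $\Sigma^3_\mu$, pushes every line in a model of $T_\mu$ up to length exactly $k$ (Corollary~\ref{getmax} applied to $\boldsymbol{\alpha}$). The non-uniqueness of bases for $\boldsymbol{\alpha}$ (Example~\ref{example_bases}) is what makes this delicate and drives the case analysis in Lemma~\ref{primchar} and Lemma~\ref{isoqe}; your outline does not see this issue because it tries to route around $\mu(\boldsymbol{\alpha})$ entirely.
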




%

\subsection{One and Two-Sorted Formalization}\label{12sort}

We explore the historical connections between the one and two-sorted approach to combinatorial geometry and indicate that while our formalizations are bi-\hspace{0.0000000001cm}interpretable in the usual sense of model theory they differ in important ways. In particular, as mentioned in Section~\ref{sm}, the one-sorted version can be strongly minimal while the two-sorted one cannot.
Hilbert's axiomatization of geometry is naturally formulated as a first-order two-sorted incidence geometry\footnote{Although he includes two non-first-order axioms; all the properly geometric work is first-order axiomatized \cite{BaldwinconI, BaldwinconII}.} and this framework is developed  in, e.g., Hall \cite{Hall_proj}. This tradition is continued with Definition~\ref{linspace} of linear spaces as two-sorted structures in a vocabulary $\tau^+$ for first-order logic.
  Tarski aimed for a first-order foundation for Euclidean geometry and pioneered a single-sorted approach to geometry summarised in \cite{GivantTarski}.  Here the fundamental relation is a ternary predicate
  interpreted as `betweenness' or more generally as `collinearity'. In order to apply standard model theoretic tools, we provide a first-order single-sorted framework in a vocabulary $\tau$ that is equivalent (for our purposes; recall, however, that Morley rank is not preserved) \mbox{to the study of linear spaces.}


In the next definitions, we regard a  linear space in the vocabulary $\tau^+$ (cf. Definition~\ref{linspace}) as a $\tau$-structure (cf. Definition~\ref{taulin});
 this is easily done.  Given a $\tau^+$-structure $B$ as in Definition~\ref{linspace}, define a $\tau$-structure $A$ by letting $A$ be $P(B)$, the points of
$B$, and defining $R(a,b,c)$ if and only there is line $\ell$ in $B$ such \mbox{that each of
$a,b,c$ is on $\ell$.}

\begin{remark}\label{def_plane}
{\rm We now show that the class $\bK^*$ (Definition~\ref{taulin}) of single-sorted  linear spaces is bi-interpretable with
 the class $\bK^+$ of linear spaces in the two-sorted vocabulary $\tau^+$ (cf. Definition~\ref{linspace}).
  Notice that conditions (A) and (B) of  Definition~\ref{linspace} imply that every pair of distinct lines intersects in at most
  one point.
Also, recall that we allow models with  no points or  lines.}
\end{remark}



%





%

We define a pair of mutually inverse bijections from  the models of a class of $\tau$-structures to a class of $\tau^+$-structures and back that are uniformly definable, respect isomorphism, and preserve substructure.
 The notion that `bi-interpretability' means `same' requires some clarification.  On the one hand, we have already mentioned that the transformation here does not preserve Morley rank/degree.  This is because the lines of the $\tau^+$ structure are interpreted as imaginary elements (equivalence classes) of  the associated $\tau$-structure (More concretely; this is a $2$-dimensional interpretation \cite[212]{Hodgesbook}.).
On the other hand, such properties as decidability, $\aleph_1$-categoricity, and $\lambda$-stability are preserved by  first-order bi-interpretability.

 While the next theorem explicitly gives
 an isomorphism of categories (with embeddings as  morphisms), by changing notation we could construct a bi-interpretation in the classical sense of  \cite[Section 5.3]{Hodgesbook} between $\bK^+$ and $\bK^*$. For example, the domain of the interpretation of $\bK^+$ into $\bK^*$ in part (1), which Hodges would label $\partial_F$, is:  $\Delta(A^2) \cup (A^2- \Delta(A^2))/E$. Our formulation is awkward for the usual applications to decidability but natural for our `equivalence' between structures. Such a reformulation is a real strengthening since  bi-interpretability  of $A$ and $B$ is equivalent to their endomorphism rings being continuously isomorphic \cite{AhlbrandtZiegler} while mere isomorphism of  those monoids gives equivalent categories of models as in \cite{Lascarcat}. But 
 \cite{Evansetal} shows that there are $\aleph_0$-categorical structures which have isomorphic but not continuously isomorphic endomorphism monoids.

%

%
%
%
%
%
%
%

%

	\begin{theorem}\label{bint}
	\begin{enumerate}[(1)]
	\item
There is an interpretation $F$ of $\mathbf{K}^+$ into $\mathbf{K}^*$.
That is, for every $A \in \mathbf{K}^*$  there
  is a  $\tau^+$-structure  $F(A) \in \bK^+$ definable without parameters in $A$.
  %
	\item
There is an interpretation $G$ of  $\mathbf{K}^*$ into  $\mathbf{K}^+$.
That is, for every $B\in \mathbf{K}^+$  there
 is a  $\tau$-structure  $G(B) \in \bK^*$ definable without parameters in $B$.

\item For any $A\in \bK^*$, $G(F(A))$ is definably isomorphic to $A$ and for any $B\in\bK^+$, $F(G(B))$ is definably isomorphic to $B$.  Thus we have a bi-interpretation.


\end{enumerate}


\end{theorem}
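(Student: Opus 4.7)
\textbf{Proof plan for Theorem~\ref{bint}.} The strategy is to construct $F$ and $G$ essentially by the recipes sketched informally in the paragraph above Remark~\ref{def_plane}, then verify that both compositions are definable isomorphisms. The main obstacle is that, in $\tau$, lines are not first-class objects and the definition of a line from two points on it is non-uniform (trivial lines of size $2$ versus non-trivial lines of size $\geq 3$); so the bulk of the verification amounts to showing this ambiguity can be absorbed into a single parameter-free formula, and that all the resulting definitions descend to the quotient.

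For part (1), given $A\in\bK^*$, I will interpret $F(A)$ on $A\sqcup(A^2\setminus\Delta)$ by letting the point sort be $A$ itself and letting the line sort be $(A^2\setminus\Delta)/E$, where
\[
(a,b)\,E\,(c,d)\ \Longleftrightarrow\ \bigl(c\in\ell_A(a,b)\bigr)\ \wedge\ \bigl(d\in\ell_A(a,b)\bigr),
\]
with the auxiliary definable set $\ell_A(a,b):=\{a,b\}\cup\{x:R(a,b,x)\}$. The formula ``$x\in\ell_A(a,b)$'' is $(x=a)\vee(x=b)\vee R(a,b,x)$, which absorbs the trivial/non-trivial dichotomy into a single disjunction. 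I would check that $E$ is a parameter-free definable equivalence relation, using the axiom $R(x,y,z)\wedge R(x,y,w)\to R(x,w,z)$ together with the fact that $R$ is assumed to hold of unordered triples of distinct points; symmetry of $R$ plus this transitivity axiom implies $\ell_A(a,b)=\ell_A(c,d)$ whenever $\{c,d\}\subseteq\ell_A(a,b)$, yielding symmetry and transitivity of $E$. Incidence is defined by $I(p,[a,b]_E):\Leftrightarrow p\in\ell_A(a,b)$, which is well-defined on $E$-classes. Linear-space axiom (B) is immediate since $a,b\in\ell_A(a,b)$, and axiom (A) reduces to: for any two distinct points $p,q$, the class $[p,q]_E$ is the unique $E$-class with both $p$ and $q$ incident, again by the $\tau$-axiom.

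For part (2), given $B\in\bK^+$, let $G(B)$ have universe $P(B)$ and define
\[
R(a,b,c)\ \Longleftrightarrow\ a,b,c\text{ pairwise distinct}\ \wedge\ (\exists\ell)\bigl(I(a,\ell)\wedge I(b,\ell)\wedge I(c,\ell)\bigr).
\]
The linear-space axiom of Definition~\ref{taulin} is then a direct consequence of the uniqueness of the line through any two distinct points in $B$: if $R(a,b,c)$ and $R(a,b,d)$ hold then $a,b,c,d$ all lie on the unique line through $a,b$, so $R(a,w,z)$ holds for any two of $\{c,d\}$ assigned to $w,z$. Parameter-freeness of $G$ is clear.

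For part (3), in $G(F(A))$ the universe is the point sort of $F(A)$, i.e.\ $A$ itself, and unpacking the definitions of $R$ in $G(F(A))$ one gets, for distinct $a,b,c\in A$, that $R^{G(F(A))}(a,b,c)$ holds iff some $\ell_A(a',b')$ contains $\{a,b,c\}$, which by the $\tau$-axiom occurs iff $R^A(a,b,c)$; so the identity is a $\emptyset$-definable isomorphism. In $F(G(B))$, the point sort is again $P(B)$, and the line sort is $(P(B)^2\setminus\Delta)/E$ where $(a,b)E(c,d)$ iff the unique $B$-line through $a,b$ also passes through $c,d$; the map $[a,b]_E\mapsto$ ``the unique $B$-line through $a,b$'' is a $\emptyset$-definable bijection (surjective because every line in $B$ has at least two points, by (B) of Definition~\ref{linspace}) that commutes with incidence. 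This produces the definable isomorphism $F(G(B))\cong B$, completing the bi-interpretation.
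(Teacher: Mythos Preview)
Your proposal is correct and follows essentially the same approach as the paper: the paper also takes points to be (a copy of) $A$, defines lines as $(A^2\setminus\Delta)/E$ with $E$ phrased as ``$\{a,b\}=\{c,d\}$ or $\{a,b\}\cup\{c,d\}$ is an $R$-clique'' (equivalent to your $c,d\in\ell_A(a,b)$), and defines $G$ exactly as you do. Your use of the uniform formula $\ell_A(a,b)=\{a,b\}\cup\{x:R(a,b,x)\}$ is a slightly cleaner packaging of the same idea, and your verification of the two compositions in part~(3) matches the paper's.
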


	\begin{proof}  We prove (1). Let $A \in \mathbf{K}^*$.
Set $P = \{ (a, a) \colon  a \in A \}$ as the set of points of the $\tau^+$-structure $F(A)$. Towards describing the lines, define the following equivalence relation $E$ on $A^2 - P$ by declaring $(a, b) E (c, d)$ if and only if the following condition is met:
	\begin{equation}\tag{$\star$}
 \{ a, b \} = \{ c, d \} \text{ or } \{ a, b \} \cup \{ c, d \} \text{ is an $R$-clique}.
\end{equation}
%
%
%
We verify that $E$ is  transitive. To this end, suppose that
$(a, b) E (c, d)$ and $(c, d) E (e, f)$,  $e \neq f$,
 $\{ a, b \} \neq \{ c, d \}$ and $\{ c, d \} \neq \{ e, f \}$.
  Since each pair is of distinct elements  both $\{a,b,c,d\}$ and $\{c,d,e,f\}$ are $R$-cliques and since two points determine a line
  $\{a,b,c,d,e,f\}$ is an $R$-clique and transitivity is established.
 Now, let
$$L = \{ [(a, b )]_E : (a, b) \in A^2 \text{ such that } a \neq b \}$$
be the set of lines of $F(A)$. For $(p,p) \in P$ and $[(a, b)]_E \in L$ define the following point-line incidence relation:
$$(p,p) I [(a, b)]_E \Leftrightarrow \exists (c, d) \in [(a, b)]_E \text{ such that } p \in \{ c, d \}.$$
Clearly, $F(A)$ is definable in the $\tau$-structure $(A,R)$.  We show that $F(A)\in\bK^+$, i.e.\! Definition \ref{linspace} is satisfied.
 Obviously, Axiom (B) is satisfied.  We prove axiom (A). Towards this goal, let $\ell_1$ and $\ell_2$ be two distinct lines of $F(A)$  that intersect (via the definition of $I$) in two distinct points $(b_1,b_1)$ and $(b_2,b_2)$.
By hypothesis $\ell_1 \neq \ell_2$ and so, we can assume $\ell_1 = [(b_1,b_2)]_E$ and there is $(c, d) \in A^2$ such that $c \neq d$, $\neg E((b_1, b_2),  (c, d))$ and $(c, d) \in \ell_2$.
Note that any $E$-equivalence class of element with more than 3 elements consists of an $R$-clique and distinct $R$-cliques can intersect in only one point; so, we finish.

\smallskip
\noindent We prove (2). Let $B\in \bK^+$.  Define  the $\tau$-structure $G(B) =(A,R)$ by letting $A$ be the points of
$B$ and defining $R(a,b,c)$ if and only if $a,b,c$ are distinct and there is
a line $\ell$ in $B$ such that each of
$a,b,c$ is on $\ell$.  Since $B$ is a linear space the axioms of $\bK^*$ are immediate.

\smallskip
\noindent We prove (3) by showing that up to definable isomorphism $G$ is $F^{-1}$.  Fix $A$ and $F(A)$ from (1).
We analyze the composition $G(F(A))$ and show the image is definably
isomorphic to $A$. The set of points, $P^{F(A)}$, is the diagonal
$\Delta(A^2)$ of $A^2$. Map $(a,a)$ to $a$. The set of lines  of $F(A)$ is
$L^{F(A)}=(A^2- \Delta(A^2))/E$.  Let $m \in L^{F(A)}$ and suppose
 $ (a_0,a_0), (a_1,a_1), (a_2,a_2)$ are on $m$, where the $a_i$ are distinct.
   By the definition of $I$ in $F(A)$,  for each $i<3$ there exists an $a'_i$ such
that for $i\neq j$, $[(a_i,a'_i)]_E = [(a_j,a'_j)]_E$.
By $(*)$ this implies the $a_i, a'_i$ for $i< 3$ (some may be repeated) form an $R$-clique in $A$.  Thus $G(F(A))$  is definably isomorphic to $A$.
\newline Now we reverse the procedure and show that for $B \in \bK^+$, $F(G(B))$ is
 definably isomorphic to $B$. This is even easier. If $a,b,c$ are collinear in
 $B$, then $G(B) \models R(a,b,c)$ (Note $P^B$ is the domain of $G(B)$). For this, recall  the argument in part (1) showing $F(A) \in \bK^*$ takes
  collinear points of $A$ into a clique composed of elements of the diagonal
   of $G(B)$, which correspond to a clique in $B$.  Applying this argument to
   $G(B)$ completes the proof.
\newline Finally, this shows, in the case at hand, the essential point of \cite{Makgeom}, that $F$ is onto from $\bK^*$ to $\bK^+$.
\end{proof}

%

\subsection{Connections with Matroids}\label{sec_matroids}

The convention in matroid theory is
to regard the rank  as (normal geometrical dimension) + 1. For example, a `plane' is a rank~$3$ matroid.
 By a plane we here mean a model of a first-order single-sorted representation of the class of  simple matroids of rank~$3$.
  In describing this representation we lay out a formal correspondence (i.e.\!\ a bi-interpretation)
 between the matroidal and axiomatic approaches (as incidence structures) to geometry. The functorial correspondence between matroids and certain incidence structures is well-known to experts, but, at the best of our knowledge, the formal correspondence by model-theoretic means in Lemma~\ref{keyobs}  has, like that in Theorem~\ref{bint}, not been made explicit in the literature.


	As is well-known, see e.g. \cite[Chapter 2]{white}, matroids can be defined using many different notions as primitive. Among
 them are the notions of dependent set, circuit, independent set, basis, etc. In this work we will assume as primary the notion of {\em
 dependent set}.   
 In e.g. \cite{Oxley, white}, a collection $\Dscr$ of dependent sets is any collection of non-empty finite sets, closed under superset, and satisfying the well-known Exchange Axiom of Definition \ref{preaxiomatization}(\ref{exchange}). The matroid theorist writes axioms in the fashion of Euclid, Hilbert in 1899 \cite{Hilbertgeom}, or Bourbaki; there is no formal language. In fact, no
 standard logic can directly express these axioms, since the collection of dependent sets contains finite set of various cardinalities.
  Notionally, his arguments and definitions can be formalized in ZFC, but this is not an issue to him.  It is however crucial to our enterprize to describe our structures in first-order single-sorted logic.

 For this, as in \cite{Baldwindep}, we first work in
a relational vocabulary $\check \tau = \{ R_n : 1 \leq n < \omega \}$, where $R_n$ is an $n$-ary relation symbol.
Our axioms on $\check \tau$-structures  {\em first require} that each $R_k$ is a {\em uniform-$k$-hypergraph}, that is, $M \models R_n(a_1, ..., a_n)$ implies:
\begin{enumerate}[(1)]
\item $a_i \neq a_j$ for every $ 1 \leq i < j \leq n$;
\item $M \models R_n(a_{\sigma{(1)}}, ..., a_{\sigma{(n)}})$, for each $\sigma \in Sym(\{ 1, ..., n\})$.
\end{enumerate}

Consequently, if $X \subseteq M$, $(x_1,..., x_n)$ is an injective enumeration of $X$ and $M \models R_n(x_1, ..., x_n)$, then  we can write $M
\models R_n(X)$. Given a $\check \tau $-structure $M$ and $D
\subseteq_{\omega} M$ we say that a set $D$ is dependent if $M \models
R_{|D|}(D)$.   The  {\em further axioms}  in Definition~\ref{preaxiomatization} require  the $R_n$ to code in
 this way dependent sets of size $n$, for $1 \leq n < \omega$. For emphasis, we write the first and third axioms as $\check \tau$ sentences but we use the abbreviations introduced above to make the exchange axiom easier to read.

 \begin{definition}[Planes in $\check \tau$]\label{preaxiomatization} Following \cite{white} (see, in particular, \cite[Proposition 2.2.3 and Theorem 2.2.6]{white}) the class $\mathbf{K}^{\check \tau}$ of simple matroids of rank $\leq 3$ can be defined as the class of $\check \tau $-structures $M$ such that each $R_k$ is a uniform-$k$-hypergraph and satisfy the following further axioms:
\begin{enumerate}[(1)]
\item \label{small} $(\forall x)\neg R_1(x)$, $(\forall x,y)[ x \neq y \rightarrow \neg R_2(x,y)]$;
\item \label{exchange} if $D_1, D_2 \subseteq_{\omega} M$ are dependent and $D_1 \cap D_2$ is not dependent, then for every $a \in M$ we have that $D_1 \cup D_2 - \{ a \}$ is dependent;
\item  for all $n\geq 4$, $\forall x_1, \ldots x_n R_n(x_1, \ldots x_n)$.

\end{enumerate}
We call $\bK^{\check \tau}$ the class of planes.
\end{definition}

In matroid parlance, condition (1) asserts  that  we consider only {\em simple}
 matroids; in the language of combinatorial geometry it asserts that the structure
is a geometry, not merely a pre-geometry.
The more usual requirement for  a matroid that a superset of a dependent set is dependent
follows immediately from (1) and (3). When dealing with simple matroids of rank $3$ we can replace $\check \tau$ by the vocabulary with a single ternary relation symbol $R = R_3$  (see Definition~\ref{taulin} and Theorem~\ref{bint}).

The formulation in the last paragraphs deliberately smudges the transition between the informal and the formal first-order viewpoint.  In the further development we will have to remind ourselves of the conditions we put on the $R_n$ for $n> 3$.


Table~\ref{tab2.4} may help in navigating among the various choices of vocabulary (language) for first-order axiomatizations of linear spaces and matroids.

 \begin{table}[h]
$$\begin{array}{|c|c|c|}
\hline
\text{Language} & \text{Class} & \text{Context} \\
\hline
	\tau & \bf K^* & \text{One-sorted linear spaces (cf. Definition~\ref{taulin})} \\
\hline
	\tau^+ & \bf K^+ & \text{Two-sorted linear spaces (cf. Definition~\ref{linspace})} \\
\hline
	\check \tau & \mathbf{K}^{\check \tau} & \text{Matroids of rank $3$ (cf. Definition~\ref{preaxiomatization})} \\
\hline
\tau & \check{\bf K} & \text{Matroids of rank $3$ as $\tau$-structures (cf. Definition~\ref{axiomatization})} \\
\hline
\end{array}
$$\caption{The various contexts/languages of Section~\ref{smaetal}.\label{tab2.4}
}
\end{table}

\begin{definition}[Planes in $\tau$] \label{axiomatization} Let $\tau$ contain a single ternary relation symbol $R$. And, let
$\psi_n(x_1, \ldots x_{n}) $ assert that the $x_i$ are distinct. $\check \bK$ is the class of $\tau$-structures that satisfy
Definition~\ref{preaxiomatization} when $x \neq x $ is substituted for $R_1(x)$, $x =y$  for $R_2(x,y)$, $R(x,y,z)$ for  $R_3(x,y,z)$, and
$\psi_n$ for $R_n$ when $n \geq 4$.  
\newline $\check \bK_0$ denotes the collection of finite structures in $\check \bK$.
\end{definition}

Definition~\ref{axiomatization} is motivated by the following result, showing that the objects we create are planes in the matroid sense (cf. Definition~\ref{preaxiomatization}).
%

\begin{lemma}\label{keyobs}
The axiom schema of Definition~\ref{axiomatization} determines a rank 3 matroid structure on each member of $\check \bK$.
\end{lemma}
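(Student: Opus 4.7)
The plan is, given $M \in \check \bK$, to define the family of dependent subsets via the substitution rule of Definition~\ref{axiomatization}, then check the axioms of Definition~\ref{preaxiomatization}. Concretely, I would declare a finite nonempty $D \subseteq M$ with $|D| = n$ to be dependent exactly when the translated instance of $R_n$ holds on $D$. Unpacking the substitutions yields an explicit dictionary: no set of cardinality at most $2$ is dependent; a triple of distinct elements $\{a, b, c\}$ is dependent iff $R(a, b, c)$; every finite set of $4$ or more distinct elements is dependent. The symmetry of the $R_n$ as uniform-$k$-hypergraphs passes through the substitution cleanly ($R$ is taken symmetric in distinct arguments, and $\psi_n$ is plainly symmetric), so this is well defined on unordered sets.

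With that dictionary in hand, I would dispatch the easy axioms first. Simplicity (axiom~(\ref{small})) and the universal dependence of $\geq 4$-element sets (axiom~(3)) are immediate from the substitution rules, since $x \neq x$ makes no singleton dependent, $x = y$ makes no pair of distinct elements dependent, and $\psi_n$ forces every $n$-tuple of distinct elements ($n \geq 4$) to be dependent. Closure under superset (which is not itself listed in Definition~\ref{preaxiomatization} but is part of the matroid paradigm) then follows by a trivial case split: either the larger set already has cardinality $3$ (so equals the dependent subset) or cardinality at least $4$ (and is dependent by axiom~(3)). The rank bound $\leq 3$ is a direct consequence of axiom~(3): any four distinct elements already form a dependent set, so no independent set has cardinality exceeding $3$.

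The only substantive check is the exchange axiom~(\ref{exchange}). Here the plan is to split by the size of $D_1 \cup D_2 - \{a\}$. When $|D_1 \cup D_2 - \{a\}| \geq 4$, dependence is automatic from axiom~(3); this dispatches every case in which $|D_1 \cup D_2| \geq 5$ or $a \notin D_1 \cup D_2$. The remaining case reduces, using axiom~(\ref{small}) to exclude dependent sets of size at most $2$, to $|D_1| = |D_2| = 3$ with $|D_1 \cap D_2| = 2$ and $a \in D_1 \cap D_2$, where what must be shown is that the remaining triple $D_1 \cup D_2 - \{a\}$ satisfies $R$. This is precisely the non-trivial instance of the translated axiom schema that defines membership in $\check \bK$, and so it holds by hypothesis. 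The main obstacle I anticipate is the bookkeeping: confirming that the first-order schema in $\tau$ really does capture every non-trivial instance of the original set-indexed exchange axiom once the substitutions are made, which amounts to running through the finitely many critical pairs $(|D_1|, |D_2|)$ and verifying that each is either vacuous (ruled out by simplicity) or subsumed by axiom~(3) or explicitly enforced by an instance of Definition~\ref{axiomatization}.
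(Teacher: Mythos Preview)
Your proposal is correct and follows essentially the same approach as the paper: both verify the exchange axiom by a size case analysis, using that sets of cardinality at least four are automatically dependent to reduce to the small cases covered directly by the translated schema. The paper's proof is terser (it treats only the case $|D_1|,|D_2|\geq 4$ explicitly and leaves the $R_3$-instances to the hypothesis), whereas you spell out the critical $|D_1|=|D_2|=3$, $|D_1\cap D_2|=2$, $a\in D_1\cap D_2$ case; but the underlying argument is the same.
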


\begin{proof} Let the $\tau$-structure $A$ satisfy the axioms from Definition~\ref{axiomatization} concerning $R_3$, in particular, the exchange axiom for $R_3$.
The only obstruction now is checking the exchange axiom under the hypothesis that every four or more element set is declared dependent in $A$. Suppose $D_1$ and $D_2$ are arbitrary sets with at least four elements. By the substitutions for $R_1$ and $R_2$, $|D_1 \cap D_2| \geq 3$ and
so $D_1 \cup D_2$ has at least
seven points and so is dependent. Thus $A$ is a simple rank 3 matroid.
\end{proof}

\begin{remark}\label{distinctions}{\rm
Lemma~\ref{keyobs} could be generalized to any $k$ using only $R_i$ for $3 \leq i \leq k < \omega$.  
Of course, if the space arises as e.g. $F_q^n$, the $n$-space over a $q$-element field,  and $k <n$ the matroid dependence by membership in $\check \bK$ will be stronger than the dependence relation arising from the native linear space.

\smallskip
\noindent
We distinguish among $\bK^*$, $\bK^{\check \tau}$ and $\check \bK$ in order to be able to  axiomatize certain notions in first-order logic.
We often say a structure `is' a matroid, meaning a matroid structure can be imposed. The notion of a matroid is a property expressed in ZFC.
But if we formalize the matroid or linear space notions in one-sorted
 first-order logic we must be more careful. A $\check \tau$-structure which belongs to $\bK^{\check \tau}$ is a matroid if it is a model of the axioms in
 Definition~\ref{preaxiomatization}.  A $\tau$-structure in $\bK^*$ may admit
  matroid structures of any finite rank.  But a $\tau$-structure in $\check \bK$ is a
   matroid of rank $3$ because it satisfies the sentences $\psi_n$ from
   Definition~\ref{axiomatization}.  We are pedantic about the $\psi_n$ in order to
    ensure that the structures at the end of our complicated construction are rank $3$ matroids and so `planes'.
    In view of
    Lemma~\ref{keyobs}, we can regard any linear system as a  rank $3$ matroid (and the limit structures to have any finite rank we please).
 That is, any such linear system admits the structure of a rank $k$ matroid for any finite $k$, and so there was no need to \mbox{restrict to the rank $3$ case.}}
    \end{remark}

%

\section{The Specific Context}\label{context}


	In this section we introduce the specific context in which we will work for the rest of the paper. The main component of this section is the introduction of a new predimension function $\delta$ (cf. Definition~\ref{defdelrank}), which will be the essential ingredient in the construction of our strongly minimal Steiner systems. This predimension function $\delta$ was introduced in \cite{Paoliniwstb} and it is inspired by Mason's $\alpha$-function, a well-known measure of complexity for matroids introduced by Mason in \cite{mason}. We will give an explicit definition of our function $\delta$ without introducing the matroid theoretic machinery needed to define the $\alpha$-function. For the reader interested in this connection we refer to \cite[Section~3]{Paoliniwstb}, where this is carefully explained.

\begin{notation}\label{basicnot}
\begin{enumerate}[(1)]
	\item\label{hat} For any class $\bL_0$ of finite structures for a vocabulary $\sigma$ that is closed under substructure, $\hat \bL_0$ denotes the class of all $\sigma$-structures $M$ such that every finite substructure of $M$ is in $\bL_0$.
	\item\label{K_0} Given an arbitrary class of structures $\bL$ for a vocabulary $\sigma$ we denote by $\bL_0$ the class of finite structures in $\bL$. (For convenience, we allow the empty structure.)
	\item We write $\backsimeq$ for isomorphism, $X\subseteq_\omega Y$ for finite subset, and if $B \subsetneq C$, we may write$\hat C$ for $C-B$.
\end{enumerate}
\end{notation}


We will define below  several classes of structures; in particular $\bK_0$,
$\bK_\mu$, and $\bK^{\mu}_{d}$ (see Table~\ref{tab_classes} for references).
Furthermore, there will be various (closure operators)/(dependence relations) on structures in each class.  Since
each one of them could naturally be called `geometric', we avoid this term and give them each a different tag (see also Table~\ref{tab_dep}):

\begin{notation}[Notions of Dependence]\label{indnot} Let $M \in \mathbf{K}^*$ (cf. Def. \ref{taulin}) and $A \subseteq M$.
\begin{enumerate}[(1)]
\item The intrinsic closure operator (cf. Definition~\ref{K0def}) is denoted by $\mathrm{icl}(A)$.
\item The $d$-closure operator (cf. Definition~\ref{defd-cl}) is denoted by $\ddcl(A)$.
\item The algebraic closure operator is denoted by $\mathrm{acl}(A)$. (We use the standard model theoretic notion for algebraic closure, i.e.\! $a \in \mathrm{acl}(B)$ means that $a$ is in a finite set definable with parameters from $B$.)
   \item The subspace closure $\mathrm{cl}_R(X)$ in $A$, the smallest subset $B$ of $A$ containing $X$  such that if $a\in A$ satsfies $R(b_1,b_2,a)$ with the $b_i \in B$, then $a \in B$.
\end{enumerate}
\end{notation}

A key fact, Lemma~\ref{dclinacl},  asserts that on a $d$-closed structure $M$ (cf. Definition \ref{defd-cl}) in the class $\bK_\mu$,  notions (3) and (4) are equivalent; this is central for proving strong minimality.

Tables~\ref{tab_dep} and \ref{tab_classes} fix the notation introduced in Definition~\ref{indnot} and the classes of models discussed at various places in the text.

 \begin{table}[h]
$$\begin{array}{|c|c|}
\hline
\text{Notation} & \text{Name} \\
\hline
	\mathrm{icl}(A) & \text{intrinsic closure} \\
\hline
	\mathrm{acl}(A) &  \text{algebraic closure} \\
\hline
	\ddcl(A)& \text{$d$-closure} \\
\hline
	\mathrm{cl}_R(X)& \text{subspace closure} \\
\hline
\end{array}
$$\caption{Notions of dependence.\label{tab_dep}
}
\end{table}

\begin{table}[h]
$$\begin{array}{|c|c|}
\hline
\text{Notation}     &  \text{References} \\
\hline
	 \mathbf{K}^*   &  \text{Definition~\ref{taulin}} \\
\hline
	 \mathbf{K}_0^* &  \text{Definitions~\ref{taulin} and \ref{basicnot}(\ref{K_0})} \\
\hline
	 \mathbf{K}_0   &  \text{Definition~\ref{K0def}} \\
\hline
	 \hat \bK_0     &  \text{Definitions~\ref{K0def} and \ref{basicnot}(\ref{hat})} \\
\hline
	 \bK_\mu        &  \text{Definition \ref{Kmu}(\ref{Kmuitem})} \\
\hline
	 \bK^{\mu}_{d}  &  \text{Definition \ref{defd-cl}(\ref{mu-d})} \\
\hline
\end{array}
$$\caption{The classes of structures relevant to our construction.\label{tab_classes}
}
\end{table}

The following notation will clarify the distinction between 2-element lines (a.k.a. trivial lines) which are understood to hold of arbitrary pairs of elements from  models in $\bK^*$ and lines where the relation
symbol $R$ is explicit (cf. Definition~\ref{taulin}).

\begin{definition}\label{def_rank} Let $A \in \mathbf{K}^*$ and $A \subseteq B$ with $B \in \mathbf{K}^*$ (cf. Definition \ref{taulin}).
\begin{enumerate}[(1)]
\item A {\em line} of $A$ is an $R$-closed subset $X$ of $A$ such that all the points from $X$ are collinear.  In particular, if two points $a \neq b \in A$ and there is no $c \in A$ with $R(a,b,c)$, then $\{a,b\}$ is a line.  We call such lines `trivial'.

\item We denote the cardinality of  a line  $\ell\subseteq A$ by $|\ell|$, and, for $B \subseteq A$, we denote by $|\ell|_B$ the cardinality of $\ell \cap B$.
   \item We say that a line $\ell$ contained in $A$ is {\em based in}  $B \subseteq A$ if $|\ell\cap B| \geq 2$, in this case we write $\ell \in L(B)$.
\item	\label{nullity} The {\em nullity of a line} $\ell$ contained in a structure $A \in \mathbf{K}^*$ is:
	$$\mathbf{n}_A(\ell) = |\ell| - 2.$$
\end{enumerate}
\end{definition}

Note that if $ B\subseteq A$ are both in $\mathbf{K}^*$,  and $\ell \subseteq A$ is a line then $\ell \cap B$ may be  in $L(B)$ (if it has at least two points) but may not be $R$-closed in $A$ (i.e.\! if $\ell -B \neq \emptyset$).

With these
notions in hand,  we introduce the new rank $\delta$ that is central to
  this paper\footnote{Mermelstein \cite{Mermelthesis} has independently studied variants on this rank, but only in the infinite rank case so the intricate analyis of primitives in this paper does not arise.}. It has two key features: (i) it is based on the notion of `dimension' of a line; (ii) the associated geometry is flat, and so we get counterexamples to Zilber's conjecture (see Section~\ref{Kmusec} for details.).

\begin{definition}\label{defdelrank} For $A \in \mathbf{K}^{*}_0$ (recall Definitions~\ref{taulin} and \ref{basicnot}(\ref{K_0})), let:
	$$\delta(A) = |A| - \sum_{\ell \in L(A)} \mathbf{n}_A(\ell).$$
\end{definition}

\begin{proposition}\label{comp_prop} Let $A$ and $B$ disjoint subsets of a structure $C \in \mathbf{K}^*_0$. Then:
	\begin{enumerate}[(1)]
	\item if $\ell \in L(AB)$ and $\ell \in L(B)$, then $\mathbf{n}_{AB}(\ell) - \mathbf{n}_{B}(\ell) = |\ell|_A$;
	\item $\delta(A/B) := \delta(AB) - \delta(B)$ is equal to:
	$$|A| - \displaystyle \sum_{\substack{\ell \in L(AB) \\ \ell \in L(A) \\ \ell \not\in L(B)}} \mathbf{n}_{AB}(\ell) - \sum_{\substack{\ell \in L(AB) \\ \ell \in L(A) \\ \ell \in L(B)}} |\ell|_A - \sum_{\substack{\ell \in L(AB) \\ \ell \not\in L(A) \\ \ell \in L(B) }} |\ell|_A.$$
\end{enumerate}
\end{proposition}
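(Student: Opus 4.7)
My plan is to prove part (1) by direct computation from the definition, and then derive part (2) by expanding $\delta(A/B)=\delta(AB)-\delta(B)$ and repeatedly partitioning the sums over lines, using (1) as the key bridge. For (1), note that for a line $\ell$ with $\ell\in L(X)$ we have $\mathbf{n}_X(\ell)=|\ell\cap X|-2$; since $A$ and $B$ are disjoint, $|\ell\cap AB|=|\ell\cap A|+|\ell\cap B|$, so
$$\mathbf{n}_{AB}(\ell)-\mathbf{n}_B(\ell)=(|\ell\cap AB|-2)-(|\ell\cap B|-2)=|\ell|_A,$$
which is the claim.

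For (2), I would first make the trivial observation that $L(B)\subseteq L(AB)$: if $|\ell\cap B|\geq 2$ then $|\ell\cap AB|\geq 2$. Starting from
$$\delta(A/B)=|A|-\sum_{\ell\in L(AB)}\mathbf{n}_{AB}(\ell)+\sum_{\ell\in L(B)}\mathbf{n}_B(\ell),$$
I would split $L(AB)=L(B)\sqcup(L(AB)\setminus L(B))$ and regroup the two sums indexed by $L(B)$ into $\sum_{\ell\in L(B)}\bigl(\mathbf{n}_{AB}(\ell)-\mathbf{n}_B(\ell)\bigr)$, which by part (1) equals $\sum_{\ell\in L(B)}|\ell|_A$. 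A further split of $L(B)$ by whether or not $\ell\in L(A)$ produces exactly the second and third sums stated in the claim.

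It remains to handle the leftover piece $-\sum_{\ell\in L(AB)\setminus L(B)}\mathbf{n}_{AB}(\ell)$. The crux of the argument, and really the only step with any content, is the following observation: a line $\ell$ with $\ell\in L(AB)$, $\ell\notin L(B)$, $\ell\notin L(A)$ satisfies $|\ell\cap A|\leq 1$ and $|\ell\cap B|\leq 1$, so by disjointness $|\ell\cap AB|\leq 2$; together with $\ell\in L(AB)$ this forces $|\ell\cap AB|=2$, hence $\mathbf{n}_{AB}(\ell)=0$. Therefore lines in $L(AB)\setminus L(B)$ that are not in $L(A)$ make no contribution, and the sum collapses to the first sum of the claim.

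Assembling the three pieces yields the formula. I do not expect any real obstacle: the proof is essentially bookkeeping around a single cancellation (part (1)) plus the one combinatorial observation that a line with fewer than two points in each of $A,B$ can carry no nullity in $AB$. The main risk is notational: one must be careful that $L(AB)$ and $L(B)$ refer to lines \emph{based in} a set rather than contained in it, so that the inclusion $L(B)\subseteq L(AB)$ is automatic and the application of (1) to the $L(B)$-summand is legitimate.
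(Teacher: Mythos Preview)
Your argument is correct. The paper itself does not supply a proof of this proposition; it is stated without proof (the sentence immediately following, which cites \cite{Paoliniwstb}, pertains to the subsequent material on $\bK_0$ and submodularity rather than to this computation). Your verification is the natural one: part~(1) is immediate from disjointness of $A$ and $B$, and for part~(2) the decomposition of $L(AB)$ according to membership in $L(A)$ and $L(B)$, together with the observation that a line in $L(AB)\setminus(L(A)\cup L(B))$ necessarily has $|\ell\cap AB|=2$ and hence nullity zero, accounts for every term. Your cautionary remark about the meaning of $L(X)$ (``based in'' rather than ``contained in'') is exactly the point one needs to get right, and you have.
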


We rely on Proposition~3.6 and Lemma~3.7 of \cite{Paoliniwstb}, which assert (the content of Lemma~\ref{delta_lemma} is also known as ``submodularity'' of the $\delta$ function):

  \begin{definition}\label{K0def}
	\begin{enumerate}[(1)] \item  Let:
	$$\bK_0 = \{ A \in \mathbf{K}^{*}_0 \text{ such that for any } A' \subseteq A,\delta(A') \geq 0\}, $$
and $(\bK_0, \leq)$ be as in \cite[Definition 3.11]{BaldwinShiJapan}, i.e.\! we let $A \leq B$ if and only if:
	$$A \subseteq B \wedge \forall X (A \subseteq X \subseteq B \Rightarrow \delta(X) \geq \delta(A)).$$
\item We write $A < B$ to mean that $A \leq B$ and $A$ is a proper subset of $B$.
\item For any $X$, the least subset of $A$ containing $X$ that is strong in $A$ is called the {\em intrinsic or self-sufficient closure} of $X$ in $A$ and denoted by $\mathrm{icl}_A(X)$  or $\overline{X}$. 
\end{enumerate}
\end{definition}

Since in the current situation we are dealing with integer coefficients for $\delta$  the intrinsic closure of every finite set is finite.

%


 \begin{remark}{\rm Note that $\bK_0$ has many fewer structures that $\bK^*_0$. In particular, no projective plane (except the Fano plane, Example~\ref{fano}) or space $A$ over a finite field is in  $\bK_0$;
as, for each such $A$, $\delta(A)<0$.}
\end{remark}

 We give a general conceptual analysis for submodularity\footnote{This result is proved by computation in \cite{Paoliniwstb}.} and flatness of $\delta$ that  clarifies the proofs of Lemmas~ \ref{delta_lemma} and \ref{getflat} (flatness of $d$). 

%


\begin{definition}\label{allflatdef}
\begin{enumerate}

 \item \label{flatdelta}  For a sequence
 $F_1, . . . , F_s$ of elements of $\bK_0$. For
$\emptyset \subsetneq S \subseteq \{1, . . . , s\}$, we let $F_S = \bigcap_{i\in s} F_i$ and $F_{\emptyset} = \bigcup_{1\leq i \leq s} F_i$.
 We say that $f$ is {\em flat} if for all
such $F_1, . . . , F_s$ we have:
$$ (*) \hspace{.1in} f(\bigcup_{1\leq i \leq s} F_i) \leq \sum_{\emptyset \neq S} (-1)^{|S|+1} f(F_S).$$
\item\label{defflat}  Suppose $(A, \cl)$ is a pregeometry on a structure $M$ with dimension function
$d$ and $F_1, . . . , F_s$ are finite-dimensional $d$-closed subsets of $A$.  Then $(A, \cl)$ is {\em flat} if $d$ satisfies equation $(*)$.

\end{enumerate}
\end{definition}

%

In the basic Hrushovski case, $\delta$ is  flat because it is the difference between two functions, the cardinality of each set, which satisfies inclusion-exclusion, and  counting  the number of occurrences of $R$ in  each set, which undercounts.
We now note our $\delta$ is similarly represented and that $\delta$ is modular
on the appropriate notion of free amalgam: $A \oplus_C B$ in $\bK_0$.

\begin{definition}\label{defcanam} Let $A \cap B =C$ with $A,B,C \in \bK_0$. We define $D:= A \oplus_{C} B$ as follows:

\begin{enumerate}[(1)]
	\item the domain of $D$ is $A \cup B$;
	\item
a pair of points  $a \in A - C$  and $b \in B - C$ are on a non-trivial line $\ell'$ in $D$ if and only if there is a line $\ell$ based in $C$ such that $a \in \ell$ (in $A$) and $b \in \ell$ (in $B$). Thus, in this case, $\ell'=\ell$ (in $D$).
\end{enumerate}
\end{definition}


Lemma~\ref{delta_lemma}.3 does not follow from submodularity but depends on the particular choice of free amalgam which is driven by `two points determine a line'.

	\begin{lemma}\label{delta_lemma} 
\begin{enumerate}
\item $\delta$ is flat (Definition~\ref{allflatdef}~\ref{flatdelta}).

\item  Let $A, B, C \subseteq D \in \mathbf{K}^{*}_0$, with $A \cap C = B$. Then: $$\delta(A/B) \geq \delta(A/C),$$
which an easy calculation shows is equivalent to submodularity: $$\delta(A\cup C) = \delta(A) + \delta(C) - \delta(B).$$
 \item \label{canext} If $E\cap F = D$, $D\leq E$ and $E,F, D \in \bK_0$ then
$G = F \oplus_D  E$ is in $
\bK_0$.  Moreover, $\delta( A \oplus_{C} B)= \delta(A) +\delta(B) - \delta(C)$ and any $D$ with $C \subseteq D \subseteq A \oplus_{C} B$ is also free.  Thus, $F \leq G$.
\end{enumerate}
\end{lemma}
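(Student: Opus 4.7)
The plan is to exploit the decomposition $\delta(A) = |A| - m(A)$ with $m(A) := \sum_{\ell \in L(A)} \mathbf{n}_A(\ell)$, in direct analogy with the basic Hrushovski case $\delta(A)=|A|-e(A)$. Since cardinality satisfies inclusion--exclusion with equality, to prove (1) it suffices to establish the reverse inclusion--exclusion inequality for $m$. Because every line of each $F_S$ arises, by ``two points determine a line'', as the restriction to $F_S$ of a unique line of $B := \bigcup_i F_i$, the sum can be reorganized line-by-line and the claim reduces to
$$|\ell|-2 \;\geq\; \sum_{\emptyset\neq S}(-1)^{|S|+1}\max\bigl(|\ell\cap F_S|-2,\,0\bigr)$$
for each non-trivial line $\ell$ of $B$. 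Setting $P_i:=\ell\cap F_i$ and decomposing $\max(x-2,0)=(x-2)+\max(2-x,0)$, the first summand contributes exactly $|\ell|-2$ by standard inclusion--exclusion on cardinality; the residual term is controlled by tracking how the points of $\ell$ distribute among the $P_i$.

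Part (2) is the $s=2$ instance of (1) rewritten: the inequality there is submodularity $\delta(A\cup C)\leq \delta(A)+\delta(C)-\delta(A\cap C)$, and a one-line algebraic rearrangement using $A\cap C=B$ yields $\delta(A/B)\geq\delta(A/C)$.

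Part (3) is where the real work lies, and, as the authors flag, it does not follow formally from submodularity. The point is that the amalgam $F\oplus_D E$ is designed so that a pair $(f,e)\in (F-D)\times(E-D)$ lies on a non-trivial line of $G$ only when that line is already based in $D$. Enumerating $L(G)$ as the disjoint union of lines wholly contained in $F$, lines wholly contained in $E$, and lines based in $D$ that extend across both sides, a direct count yields the exact additivity $\delta(G)=\delta(F)+\delta(E)-\delta(D)$. For the second clause, I would iterate this for any intermediate $D'$ with $D\subseteq D'\subseteq G$: the natural decomposition $D'=(D'\cap F)\oplus_{D'\cap D}(D'\cap E)$ again gives additivity, from which $G\in\bK_0$ (no negative-$\delta$ subset arises) and $F\leq G$ both follow, using $D\leq E$.

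The main obstacle is part (3): the additivity of $\delta$ across the amalgam is precisely where the ``two points determine a line'' axiom does the heavy lifting, since replacing $\oplus_D$ with a generic free amalgam would permit new, uncounted lines and break both the modular identity and the strong embedding.
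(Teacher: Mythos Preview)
Your approach is essentially the paper's, with slightly different packaging. For (1), both you and the paper decompose $\delta=|\cdot|-m$, note that cardinality satisfies inclusion--exclusion exactly, and argue that the nullity term under-counts on the right-hand side because only lines \emph{based} in $F_S$ contribute to $m(F_S)$; your line-by-line reduction via $\max(x-2,0)=(x-2)+\max(2-x,0)$ is just a more explicit bookkeeping of the same under-count the paper describes by example. For (2), both proofs simply observe this is the two-set case of (1). For (3), you count lines directly to get additivity, while the paper first gets $\leq$ from submodularity and then observes the free amalgam introduces no ``missed'' line contributions, hence equality; these are the same computation read in opposite directions.

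One genuine omission in your part (3): before you can speak of $G\in\bK_0$, you must verify $G\in\bK^*_0$, i.e.\ that two points of $G$ determine a \emph{unique} line. The paper checks this explicitly as its first step: the danger is that a point $a\in F-D$ and a point $b\in E-D$ might lie on two distinct lines of $G$, and one must invoke Definition~\ref{defcanam}(2) to see that any non-trivial line through $a,b$ is forced to be the unique line through the (unique) pair in $D$ supporting both. Your trichotomy ``lines in $F$, lines in $E$, lines based in $D$ crossing over'' tacitly assumes this uniqueness; without it, $L(G)$ is not well-defined and the direct count cannot get started. This is easy to patch, but it is exactly the point at which the linear-space axiom is doing the work you flag in your final paragraph, so it deserves to be stated.
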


\begin{proof}  1) Recall $\delta(A) = |A| - \Sigma_{\ell \subseteq A} (|\ell| -2$). Observe that if $A,B$  are
sets and $\ell$ is a line in $A\cup B$, then:

$$|\ell|=|\ell \cap A| + |\ell \cap B| - |\ell \cap (A\cap B)|.$$
But in computing $\delta(\bigcup_{1\leq i \leq s} F_i)$
on the right hand of (*)
 one must sum  for each $S$ only over those lines based in $F_S$.  Thus for example, in the case of two sets $A,B$, if a line is based in $A-B$ and has a single point in $C-B$ (and none in $B$) that point will not be counted on the right-hand-side but will be on the left. So the subtracted term of $\delta(F_S)$ is under-counted and $\delta(F_S)$ is over-counted. This is not corrected at the next step because no $\ell$ is based there. Thus, $\delta$ is flat.


2) For such combinations of counting functions, submodularity is just the notion of flat for two sets.

3) We need to check that each pair of points $a_0,a_1$
determine a unique line in $G$. Without loss of generality, one is in $F-D$ and the other in $E$. Suppose for contradiction there are two distinct lines on which both of $a_0,a_1$ are incident. If both lines are contained in $F$, the claim is obvious. But, if not, Definition~\ref{defcanam} guarantees   that both of $a_0,a_1$  are on a unique line based in $D$.

By the general submodularity argument, $\delta( A \oplus_{C} B)\leq \delta(A) +\delta(B) - \delta(C)$. But the definition of the free amalgamation guarantees that each line that intersects $A-B$ and $C-B$
in based on two points in $B$. There is no undercount as  there may be in 2) so we have equality.
\end{proof}

Reference \cite{BaldwinShiJapan} provides a set of axioms for {\em strong substructure}.
 These axioms can be seen to hold in our situation using Lemma~\ref{delta_lemma}.

 \begin{fact}\label{conclusion_ax} $(\bK_0, \leq)$ satisfies Axiom A1-A6 from \cite[Axioms Group A]{BaldwinShiJapan}, i.e.\!:
\begin{enumerate}[(1)]
	\item if $A \in \bK_0$, then $A \leq A$;
	\item if $A \leq B \in \bK_0$, then $A$ is a substructure of $B$;
	\item if $A, B, C \in \bK_0$ and $A \leq B \leq C$, then $A \leq C$;
	\item if $A, B, C \in \bK_0$, $A \leq C$, $B$ is a substructure of $C$, and $A$ is a substructure of $B$, then $A \leq B$;
	\item $\emptyset \in \bK_0$ and $\emptyset \leq A$, for all $A \in \bK_0$;
	\item\label{item6} if $A, B, C \in \bK_0$, $A \leq B$, and $C$ is a substructure of $B$, then $A \cap C \leq C$.
\end{enumerate}
\end{fact}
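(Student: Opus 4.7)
The plan is to verify the six axioms in order, leveraging Lemma~\ref{delta_lemma}(2) (submodularity of $\delta$) for the two substantive ones, namely transitivity (3) and restriction (6). Axioms (1), (2), (5) will drop out of the very definition of $\leq$ and the fact that every member of $\bK_0$ has $\delta \geq 0$ on each finite subset; in particular the empty structure has $\delta(\emptyset)=0$, so for any $X \subseteq A$ with $A \in \bK_0$ we have $\delta(X) \geq 0 = \delta(\emptyset)$, giving $\emptyset \leq A$. Axiom (4) is immediate since if $A \leq C$ and $A \subseteq B \subseteq C$, then any $X$ with $A \subseteq X \subseteq B$ already satisfies $A \subseteq X \subseteq C$, so $\delta(X) \geq \delta(A)$ by $A \leq C$.

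For Axiom (3) (transitivity), suppose $A \leq B \leq C$ and fix $X$ with $A \subseteq X \subseteq C$. I would apply submodularity to the pair $X, B$ inside $C$:
\[
\delta(X) + \delta(B) \;\geq\; \delta(X \cap B) + \delta(X \cup B).
\]
Since $A \subseteq X \cap B \subseteq B$ and $A \leq B$, we have $\delta(X \cap B) \geq \delta(A)$; since $B \subseteq X \cup B \subseteq C$ and $B \leq C$, we have $\delta(X \cup B) \geq \delta(B)$. Combining these two inequalities and cancelling $\delta(B)$ gives $\delta(X) \geq \delta(A)$, which is what is needed.

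Axiom (6) is the key step, and is where I expect to have to be most careful. Assume $A \leq B$ with $C$ a substructure of $B$, and let $X$ satisfy $A \cap C \subseteq X \subseteq C$. The clever move is to pair $X$ with $A$ rather than with $C$: since $X \subseteq C \subseteq B$, we have $A \cup X \subseteq B$, and since $X \subseteq C$ together with $A \cap C \subseteq X$ forces $A \cap X = A \cap C$. Submodularity applied to $A$ and $X$ inside $B$ yields
\[
\delta(X) + \delta(A) \;\geq\; \delta(A \cap X) + \delta(A \cup X) \;=\; \delta(A \cap C) + \delta(A \cup X).
\]
Because $A \leq B$ and $A \subseteq A \cup X \subseteq B$, we have $\delta(A \cup X) \geq \delta(A)$, so cancelling $\delta(A)$ on both sides gives $\delta(X) \geq \delta(A \cap C)$, as required. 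The only subtlety — and the main obstacle to watch — is ensuring that the equality $A \cap X = A \cap C$ really holds and that one applies submodularity with the correct pair; once that is in place, the rest is a two-line manipulation.
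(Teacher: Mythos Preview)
Your proof is correct and is precisely the approach the paper has in mind: the paper does not actually write out a proof of this Fact, but merely remarks that the axioms ``can be seen to hold in our situation using Lemma~\ref{delta_lemma}'' (submodularity). Your verification of (3) and (6) via the submodular inequality, together with the trivial checks for (1), (2), (4), (5), supplies exactly the details the paper leaves to the reader.
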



	We use the following notion of genericity:

\begin{definition}\label{defgen}
The countable
model $M\in \hat \bK_0$ is {\em $({\bK_0}, \leq)$-generic} when:
\begin{enumerate}[(1)]
\item
     if $A\leq M, A\leq B\in \bK_{0}$, then there exists $B'\leq M $
such that $ B\backsimeq_{A} B'$;
\item
 $M$ is a union of finite substructures.
\end{enumerate}
\end{definition}

\section{Primitive Extensions and Good Pairs}\label{primgood}

Using only the $\delta$ function one can build up models in $\bK_0$ from well-defined building blocks: primitive extensions and good pairs (Definition~\ref{prealgebraic}). This section
is  an analysis of these foundations. In the next section we use them to study the complete theories we are constructing.

	\begin{definition}\label{prealgebraic} Let $A, B \in \bK_0$ with $A \cap B = \emptyset$ and $A \neq \emptyset$.
	\begin{enumerate}[(1)]

\item
We say that $A$ is a {\em  primitive extension} of $B$ if $B \leq A$ and there is no $A_0$ with $B \subsetneq A_0 \subsetneq A$ such that $B \leq A_0 \leq A$.
 Equivalently, we describe a primitive pair as $(B,A)$ where $B$ and $A$ are disjoint (and so $BA$ is the set in the initial description).
\item  If $\delta(A/B) =0$, we write $0$-primitive. We stress that in this definition while $B$ \mbox{may be empty, $A$ cannot be.}
	\item  We say that the $0$-primitive pair $A/B$ is {\em good} if there is no $B' \subsetneq B$ such that  $(A/B')$ is $0$-primitive.
When discussing good pairs, usually $A$ and $B$ are disjoint; for ease of notation, sometimes $A$ is confused with $A \cup B$.

	\item If $A$ is $0$-primitive over $B$ and $B' \subseteq B$ is such that we have that $A/B'$ is good, then we say that $B'$ is a {\em base} for $A$ (or sometimes for $AB$).  
	\item If the pair $A/B$ is good, then we also write $(B,A)$ is a {\em good pair}.
\item  We sometimes use the notation $\hat C$: if $B \subsetneq C$, then $\hat C = C - B$.
\end{enumerate}	
\end{definition}

	\begin{remark}\label{rmk_gp} {\rm Note that if $C$ is primitive over the empty set then the unique base for $C$ is $\emptyset$. For, if there is
$B\neq \emptyset$ with $B \subsetneq C$ with $C$ based on $B$, then $\emptyset \leq B$ and $B \subsetneq C$ contradicting that $C$ is primitive over the empty set.
\newline This does not forbid the existence of $C \in \bK_0$ such that $\delta(C/\emptyset) = 0$ but $C$ is not primitive over $\emptyset$; on this see Lemma~\ref{nlf}.}
\end{remark}


%


%
%

\begin{example}\label{fano}{\rm Some sets are based on the empty set.  In particular, if $C$ is the $\tau$-structure representing the unique 7 point projective plane (often called the Fano plane), then
$\delta(C)=
0$.  And it is easy to see $(\emptyset,C)$ is a good pair.}
\end{example}


	In earlier variants of the Hrushovski's construction one was able to prove the existence of a {\em unique} base $B'$ for {\em any} given $0$-primitive extension $A/B$. Unfortunately, this assertion is {\em false} in the current situation, cf. Example \ref{example_bases}.  We will make up for this with a careful examination of the structure of good pairs that almost regains uniqueness.

	\begin{example}\label{example_bases} {\rm For $A \in \bK_0$ containing $m + 2$ points $p_1, ..., p_{m+2}$ on a line $\ell$ and for some $c$ such that $c \not\in \{ p_1, ..., p_{m+2} \}$ but  $c$ is on $\ell$ in $A \cup \{c\}$; we have that $c$ is $0$-primitive over $A$, and any  pair of points in $\ell \cap A$ constitutes a base for $c/A$.}
 \end{example}

 The following preparatory results allow us to characterize primitive extensions and eventually prove amalgamation for $(\mathbf{K}_{\mu}, \leq)$ (cf. Conclusion~\ref{conclude}).

 	\begin{proposition}\label{oddpointout} Let $B \in \bK_0$ and $b \in B$ such that $b$ does not occur in any $R$-tuple from $B$, then $\delta(B) = \delta(B - \{ b \}) + 1$.
\end{proposition}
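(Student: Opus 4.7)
The plan is to unwind the definition of $\delta$ and argue that removing $b$ decreases the cardinality term by $1$ while leaving the nullity sum unchanged.

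First I would recall that
$$\delta(B) = |B| - \sum_{\ell \in L(B)} \mathbf{n}_B(\ell),$$
where $\mathbf{n}_B(\ell) = |\ell| - 2$, and note that trivial lines (those with exactly two points in $B$) contribute $0$ to the sum. So only non-trivial lines of $B$, i.e.\! those $\ell \in L(B)$ with $|\ell| \geq 3$, matter.

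The key observation is that under the hypothesis, $b$ cannot lie on any non-trivial line of $B$: if $\ell \in L(B)$ contained $b$ and had $|\ell| \geq 3$, then $\ell \cap B$ would contain $b$ together with at least two other points $x,y \in B$, and by collinearity we would have $R(b,x,y)$, contradicting that $b$ appears in no $R$-tuple of $B$. Consequently, every non-trivial line of $L(B)$ is actually contained in $B - \{b\}$.

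From this I would conclude that the non-trivial lines of $B$ are exactly the non-trivial lines of $B - \{b\}$, with the same number of points, so
$$\sum_{\ell \in L(B)} \mathbf{n}_B(\ell) = \sum_{\ell \in L(B-\{b\})} \mathbf{n}_{B-\{b\}}(\ell).$$
Since $|B| = |B - \{b\}| + 1$, combining yields
$$\delta(B) = |B-\{b\}| + 1 - \sum_{\ell \in L(B-\{b\})} \mathbf{n}_{B-\{b\}}(\ell) = \delta(B - \{b\}) + 1.$$
There is no real obstacle here; the only subtle point is verifying that no non-trivial line based in $B$ can pass through the isolated point $b$, which is immediate from the definition of collinearity and the hypothesis on $b$.
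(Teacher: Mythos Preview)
Your proof is correct and follows the same approach as the paper, which simply notes that $b$ is on no line based in $B-\{b\}$ and appeals to Definitions~\ref{def_rank} and~\ref{defdelrank}. You have merely spelled out the computation in more detail.
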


	\begin{proof} As $b$ is on no line based in $B - \{ b \}$ this follows \mbox{from Definitions \ref{def_rank} and \ref{defdelrank}.}
\end{proof}

Using the above proposition, we can see:
	\begin{proposition}\label{uniqueness_base} Let $A, B \in \bK_0$ with $A \cap B = \emptyset$, $AB \in \bK_0$ and $B \leq AB$. Then:
	\begin{enumerate}[(1)]
	\item if there exists $b \in B$ such that $b$ does not occur in any $R$-tuple from $AB$, and  $B'$ denotes $B - \{ b \}$, then $\delta(A/B) = \delta(A/B')$.

	\item if the $0$-primitive pair $A/B$ is good (cf. Definition \ref{prealgebraic}(2)), then for every $b \in B$ we have that $b$ occurs in an $R$-tuple from $AB$.

	\end{enumerate}
\end{proposition}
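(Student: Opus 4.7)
For part (1), the plan is to apply Proposition~\ref{oddpointout} twice: once to $B$ itself (noting that since $b$ occurs in no $R$-tuple of the larger structure $AB$, it a fortiori occurs in no $R$-tuple of $B$, giving $\delta(B) = \delta(B') + 1$) and once to $AB$ (giving $\delta(AB) = \delta(AB - \{b\}) + 1 = \delta(AB') + 1$, since $b \in B$ means $AB - \{b\} = AB'$). Subtracting, the two $+1$'s cancel and we get $\delta(A/B) = \delta(AB) - \delta(B) = \delta(AB') - \delta(B') = \delta(A/B')$.

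For part (2), I will argue the contrapositive. Suppose some $b \in B$ occurs in no $R$-tuple of $AB$, and set $B' = B - \{b\}$. The goal is to produce a strictly smaller base, by showing that $(A/B')$ is itself $0$-primitive, directly contradicting goodness of $(A/B)$. The equality $\delta(A/B') = \delta(A/B) = 0$ is immediate from part~(1), and $B', AB' \in \bK_0$ since both are substructures of $B$ and $AB$ respectively. The nontrivial content is to verify (i) $B' \leq AB'$ and (ii) $A$ is a primitive extension of $B'$.

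For (i), given $X$ with $B' \subseteq X \subseteq AB'$, set $X^+ = X \cup \{b\}$ so that $B \subseteq X^+ \subseteq AB$. Since $b$ is isolated in $AB$ it is isolated in $X^+$, so Proposition~\ref{oddpointout} gives $\delta(X^+) = \delta(X) + 1$; combining with $\delta(X^+) \geq \delta(B) = \delta(B')+1$ yields $\delta(X) \geq \delta(B')$. For (ii), suppose toward contradiction there exists $A_0$ with $B' \subsetneq A_0 \subsetneq AB'$ and $B' \leq A_0 \leq AB'$. Consider $A_0^+ := A_0 \cup \{b\}$. One checks that $B \subsetneq A_0^+ \subsetneq AB$ (using $A_0 \subsetneq AB'$ and $A_0 \supsetneq B'$, so $A_0^+$ is strictly between $B$ and $AB$, whether or not $b \in A_0$ already), and then the same ``add/remove the isolated $b$'' trick promotes the $\leq$-relations: for $X$ with $A_0^+ \subseteq X \subseteq AB$ we compare $X - \{b\}$ with $A_0$ inside $AB'$, and for $Y$ with $B \subseteq Y \subseteq A_0^+$ we compare $Y - \{b\}$ with $B'$ inside $A_0$. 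Each time the $+1$ contributed by the isolated $b$ is present on both sides and cancels, so $B \leq A_0^+ \leq AB$. This contradicts the primitivity of $A$ over $B$, finishing (ii).

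The main obstacle is the bookkeeping in (ii): one must take care of the case distinction on whether $b$ already belongs to $A_0$ (which I handle uniformly by passing to $A_0^+ = A_0 \cup \{b\}$) and verify that in every relevant inclusion the isolated status of $b$ is preserved so that Proposition~\ref{oddpointout} applies. Once that lemma is in place, both parts reduce to simple arithmetic with the $\delta$-function.
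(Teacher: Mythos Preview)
Your proof is correct and follows the paper's approach: your argument for (1) is verbatim the paper's, and for (2) the paper simply writes ``it suffices to prove (1)'' and leaves implicit the verification that $(A/B')$ is again $0$-primitive, which you have carefully spelled out. One minor simplification: since $A_0 \subseteq AB' = AB - \{b\}$, the point $b$ can never lie in $A_0$, so no case distinction is needed when forming $A_0^+$.
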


	\begin{proof} It suffices to prove (1), and (1) is clear  by applying Proposition~\ref{oddpointout} to $AB$  as follows:
	$$\delta(A/B) = \delta(AB) - \delta(B) = (\delta(AB') + 1) - (\delta(B') + 1) = \delta(AB') - \delta(B').$$
\end{proof}
%

We use the following technical lemma to prove Lemma~\ref{primchar}, which characterizes good pairs.

\begin{lemma} \label{techforZiegler} Suppose $C$ is a  primitive extension of $B$ such that $|(C-B)| \geq 2$,
  then every non-trivial line $\ell$ with $\ell \cap (C-B) \neq \emptyset$ intersects $B$ in at most one point. Furthermore, if $C$ is $0$-primitive, then any point in $(C-B)$ lies on two lines based in $(C-B)$.
\end{lemma}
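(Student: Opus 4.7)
My plan is to prove both parts by contradiction, in each case producing an intermediate set $A_0$ with $B \subsetneq A_0 \subsetneq C$ and $B \leq A_0 \leq C$, which would violate primitivity. The workhorse is Proposition~\ref{comp_prop}(2): when $A=\{p\}$ is a single point, the first two sums there vanish (since $\ell\in L(\{p\})$ would demand $|\ell\cap\{p\}|\geq 2$), so only the third sum survives and it counts the lines through $p$ based in the other set. Consequently, $\delta(B\cup\{p\}/B)=1-m$ where $m$ is the number of lines based in $B$ through $p$, and analogously $\delta(C/(C-\{p\}))=1-k$ where $k$ counts non-trivial lines through $p$ with at least two further points in $C$.

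For the first part, I would assume for contradiction that some non-trivial line $\ell$ satisfies $\ell\cap(C-B)\neq\emptyset$ and $|\ell\cap B|\geq 2$, so $\ell\in L(B)$. Picking $p\in\ell\cap(C-B)$ and setting $A_0=B\cup\{p\}$, the existence of $\ell$ forces $m\geq 1$ and hence $\delta(A_0/B)\leq 0$. But $B\leq C$ gives $\delta(A_0)\geq\delta(B)$, so in fact $\delta(A_0/B)=0$. Then $B\leq A_0$ holds trivially and $A_0\leq C$ follows from $B\leq C$ combined with $\delta(A_0)=\delta(B)$; since $|C-B|\geq 2$ guarantees $A_0\subsetneq C$, this contradicts the primitivity of $C$ over $B$.

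For the second part, I would fix $p\in C-B$ and let $A_0=C-\{p\}$, which still properly contains $B$ thanks to $|C-B|\geq 2$. The display above yields $\delta(C/A_0)=1-k$. Since $C$ is $0$-primitive and $B\leq C$, we have $\delta(A_0)\geq\delta(B)=\delta(C)$, forcing $k\geq 1$; the case $k=1$ would again produce the forbidden chain $B\leq A_0\leq C$, so $k\geq 2$. Each of the $k$ lines is a non-trivial line through $p\in C-B$, so by the already proved first part each meets $B$ in at most one point; combined with the fact that such a line has at least two further points in $C-\{p\}$, this forces it to have at least two points in $C-B$, i.e.\! to be based in $C-B$. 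The only real obstacle is the bookkeeping required to extract the clean expressions for $\delta(A_0/B)$ and $\delta(C/A_0)$ from Proposition~\ref{comp_prop}(2), which fortunately simplifies drastically because we add or remove only a single point.
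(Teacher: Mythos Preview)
Your proof is correct and follows essentially the same strategy as the paper's: for the first claim you exhibit $B\cup\{p\}$ as a forbidden intermediate strong extension, and for the second you delete $p$ and analyse the change in $\delta$. The only cosmetic difference is that the paper invokes Proposition~\ref{oddpointout} for the case $k=0$ and a direct nullity count for $k=1$, whereas you package both into the single formula $\delta(C/A_0)=1-k$ coming from Proposition~\ref{comp_prop}(2); your treatment is a touch more uniform but the underlying computation is identical.
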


\begin{proof} Let  $\ell$ be a line that intersects $(C-B)$. Then $\ell$ is not based in $B$ since, if so, for any $c\in \ell \cap (C-B)$, $Bc$ would contradict the primitivity of $C$.
 But then, if $C$ is $0$-primitive, any    $c\in (C-B)$ must lie on a line based in $(C-B)$, as otherwise, letting $C' = (C-B)-\{c\}$,  Proposition~\ref{oddpointout} implies $\delta(C'/B) =\delta(C/B)-1 = 0-1 < 0$,
contradicting $B \leq C$.  But, in fact,  $c\in (C-B)$ must lie on two lines based in $(C-B)$.  If it is based on only one, deleting $c$ decrements both the number of points and the sum of the nullities of lines based in $(C-B)$ by $1$. So $\delta(C'/B) = 0$, contradicting that $C$ is $0$-primitive over~$B$. \end{proof}

%
%
%


 The next lemma is the {\em fundamental} tool for our analysis of primitive extensions.

\begin{lemma} \label{primchar}
 Let $B \leq C \in \bK_0$ be a primitive extension. Then there are two cases:
	\begin{enumerate}[(1)]
	\item $\delta(C/B) = 1$ and $C = B \cup \{ c \}$;
	\item $\delta(C/B) = 0$.
\begin{enumerate}[(2.1)]
\item
There is $c \in (C-B)$ incident with a line $\ell$ based in $B$ if and only if $|(C-B)| = 1$. In that case, any $B' \subseteq B$ with $B'\subseteq \ell$ and such that $|B'|=2$ yields a good pair $(B',c)$.  Furthermore, $c$ is in the relation $R$ with an element $b \in B$ if and only if $b$ is on the  unique line based in $B'$.
\end{enumerate}
\begin{enumerate}[(2.2)]
\item  If $|(C-B)| \geq 2$ then there is a unique base $B_0$ in $B$ for $C$.  Moreover, suppose $b \in B$ and $c \in (C-B)$. If $b$ and $c$ lie on a nontrivial line, then $b\in B_0$.  And every $b\in B_0$ lies on such a line, which must be based in $(C-B)$.
 \end{enumerate}
	\end{enumerate}
\end{lemma}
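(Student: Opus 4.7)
The plan is to begin by combining $\delta(C/B)\geq 0$ (from $B\leq C$) with an intrinsic–closure argument to pin $\delta(C/B)\in\{0,1\}$. Pick any $c\in C-B$. By definition $\icl_C(Bc)$ is strong in $C$ and contains $B$, so primitivity forces $\icl_C(Bc)=C$ (either $Bc=C$ when $|C-B|=1$, or else $\icl_C(Bc)$ is a strong intermediate between $B$ and $C$, which is forbidden). Since intrinsic closure cannot raise $\delta$, this yields $\delta(C)\leq\delta(Bc)\leq\delta(B)+1$, the last inequality because $c$ lies on at most one line based in $B$ (two points determine a line).

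For case $(1)$, $\delta(C/B)=1$, I suppose $|C-B|\geq 2$ and seek a contradiction. Pick $c\in C-B$ and split on $\delta(c/B)\in\{0,1\}$. If $\delta(c/B)=0$, then $\delta(Bc)=\delta(B)$ and, since every $X$ with $Bc\subseteq X\subseteq C$ contains $B$ and hence satisfies $\delta(X)\geq\delta(B)=\delta(Bc)$, we get $Bc\leq C$; this gives a proper strong intermediate $B\leq Bc\leq C$, contradicting primitivity. If $\delta(c/B)=1$, then $\delta(Bc)=\delta(C)$, and either $Bc\leq C$ (again contradicting primitivity) or $Bc\not\leq C$ so that $\icl_C(Bc)$ has $\delta$ strictly less than $\delta(Bc)=\delta(B)+1$; combined with $\delta(\icl_C(Bc))\geq\delta(B)$ this forces $\delta(\icl_C(Bc))=\delta(B)\neq\delta(C)$, so $\icl_C(Bc)\neq C$ is again a proper strong intermediate. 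Either sub-case fails, so $|C-B|=1$.

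In case $(2.1)$ the forward direction is immediate from Lemma~\ref{techforZiegler}: a line based in $B$ meets $B$ in $\geq 2$ points, so it can intersect $C-B$ only when $|C-B|<2$. The backward direction reads off from the formula $\delta(c/B)=1-\#\{\ell\in L(B):c\in\ell\}$, which must equal $0$. For the good-pair claim, any $B'=\{b_1,b_2\}\subseteq\ell\cap B$ gives $\delta(c/B')=0$ since $R(b_1,b_2,c)$ persists in $B'c$; every proper $B''\subsetneq B'$ has $|B''|<2$, hence bases no line through $c$, so $\delta(c/B'')=1$. For the last clause, with $|C-B|=1$ any nontrivial line through $c$ consists of $c$ plus $\geq 2$ points of $B$ and is therefore based in $B$; since $\delta(c/B)=0$ admits exactly one such line, that line must be $\ell$, so $c$ is $R$-related to $b\in B$ iff $b\in\ell$.

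The main work is $(2.2)$, which I attack via Proposition~\ref{comp_prop}(2) with $A=C-B$. Lemma~\ref{techforZiegler} forces every line $\ell$ with $\ell\cap A\neq\emptyset$ to satisfy $|\ell\cap B|\leq 1$; hence for any $B_0\subseteq B$ the intersection $L(A)\cap L(B_0)$ is empty and every $\ell\in L(B_0)$ has $|\ell|_A=0$. Substituting, two of the three sums in Proposition~\ref{comp_prop}(2) vanish and I obtain
\[
\delta(A/B_0)\;=\;\delta(A)\;-\;\sum_{\ell\in L(A)}|\ell\cap B_0|.
\]
Let $B_0^{\star}=\{b\in B:b\text{ lies on some }\ell\in L(A)\}$. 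A point $b\in B\setminus B_0^{\star}$ contributes nothing to the sum, so $\delta(A/B_0)$ depends only on $B_0\cap B_0^{\star}$; together with $\delta(A/B)=0$ this gives $\sum_{\ell\in L(A)}|\ell\cap B_0^{\star}|=\delta(A)$, so $\delta(A/B_0^{\star})=0$. Any $B_0\subsetneq B_0^{\star}$ omits some $b\in B_0^{\star}$ lying on at least one line in $L(A)$, strictly shrinking the sum and forcing $\delta(A/B_0)>0$; hence $B_0^{\star}$ is the unique minimal base. The moreover clause falls out: a nontrivial line through $b\in B$ and $c\in A$ has $\geq 2$ points in $A$ by Lemma~\ref{techforZiegler}, so lies in $L(A)$, putting $b$ in $B_0^{\star}$; conversely every $b\in B_0^{\star}$ is by definition on a line in $L(A)$, i.e.\ one based in $C-B$. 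The chief obstacle will be the accurate bookkeeping used to reduce Proposition~\ref{comp_prop}(2) to the displayed linear identity; once that is in hand, both existence and uniqueness of the base collapse to an elementary counting argument.
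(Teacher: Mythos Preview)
Your proof is correct and follows essentially the same strategy as the paper's: both identify the base in Case~(2.2) as the set of points of $B$ lying on lines based in $C-B$, both lean on Lemma~\ref{techforZiegler} to rule out lines based in $B$ meeting $C-B$ when $|C-B|\geq 2$, and both proofs are equally terse about checking that $A$ is actually \emph{primitive} (not merely $\delta=0$) over the candidate base---a step that follows once one observes $\delta(A'/B_0^{\star})=\delta(A'/B)$ for every $A'\subseteq A$.

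The presentational differences are minor but worth noting. For the dichotomy $\delta(C/B)\in\{0,1\}$ and Case~(1) you give a careful intrinsic-closure argument with an explicit case split on $\delta(c/B)$; the paper's Case~1 is a one-sentence sketch that is arguably less transparent than yours. In Case~(2.2) the paper invokes Proposition~\ref{uniqueness_base}(1) (iteratively deleting points of $B$ that appear in no $R$-tuple with $C-B$), whereas you reduce Proposition~\ref{comp_prop}(2) to the clean linear identity $\delta(A/B_0)=\delta(A)-\sum_{\ell\in L(A)}|\ell\cap B_0|$ and read off both existence and uniqueness of $B_0^{\star}$ directly. Your route is slightly more computational but makes the uniqueness (minimality) of the base more visibly a counting fact; the paper's route is slightly more conceptual but relies on an auxiliary proposition. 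Either way the content is the same.
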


\begin{proof} We follow the case distinction of the statement of the lemma:
\newline {\bf Case 1}. Suppose $\delta(C/B) >0$ and there are distinct elements in $(C-B)$  that are not on lines based in $B$, then any one of them gives a proper intermediate strong
extension of $B$ that is strong in $C$. Thus $C$ must add only one element to $B$ yielding Case 1.
\newline {\bf Case 2}. Suppose $\delta(C/B) =0$.
\newline {\bf Case 2.1}. Suppose there is an element $c \in (C-B)$ which is on a line with two points in $B$, say $b_1,b_2$, and $|(C-B)| \geq 2$. Then clearly $Bc$ is a primitive extension of $B$ and $Bc \lneq BC$. Thus, $(C-B)$ must  be $
\{c\}$. Furthermore, $(\{b_1,b_2\},c)$ is a good pair.  So $C$ is based on $\{b_1,b_2\}$ and for any $b \in B$, $b$ is $R$-related to $c$ if and if
$R(b_1,b_2,b)$; otherwise $c$ would be on two lines based in $B$ (contradicting $B \leq C$). Conversely, if $|(C-B)| = 1$ then $c$ must be on a line based in $B$ since $\delta(C/B) = 0$.
\newline{\bf Case 2.2} $|(C-B)| \geq 2$ and $\delta(C/B) =0$.
%
\newline By Lemma~\ref{techforZiegler}, each line $\ell\in L((C-B))$ intersects $B$ in at most one point $b_{\ell}$.
 If there is no such $b_\ell$, then there is no $R$-relation between $(C-B)$ and $B$, so by Proposition~\ref{uniqueness_base}(2), $B = \emptyset$ and $C$ is based on $\emptyset$. As argued in Remark~\ref{rmk_gp}, that base must be unique.
\newline  If there is such a $b_\ell$, let $B_0$ be the collection of all the $b_\ell$, $\ell\in L((C-B))$.  By Lemma~\ref{uniqueness_base}.(1), $\delta(C/B_0)=\delta(C/B)$, and so $(B_0,C)$ is
a good pair. Further $B_0$ is the unique base for $C$ as these are the only elements of $B$ on lines that intersect $(C-B)$.
\end{proof}

 Omer Mermelstein provided us with  an example showing there
	 are infinitely many
primitives based on a single three element set.
 But the study of $(a,b)$ cycles
in \cite{BaldwinsmssII} led to stronger and simpler examples over smaller base sets.
Recall that any linear space with $3$-point lines is an example of Steiner triple system (i.e.\! in Definition~\ref{pbddef} we have $K = \{ 3\}$). The following definition will be used to prove Lemma~\ref{2prim}.


\begin{definition}[\!\!\hspace{-0.0000075cm}\cite{CameronWebb}]\label{abgraph} We define the notion of $(a,b)$-cycle graphs in Steiner triple systems.
 Fix any two points $a,b$ of a Steiner triple system $\Sscr = (P,L)$.  The cycle graph $G(a,b)$ has vertex set $P -\{a,b,c\}$ where $(a,b,c)$ is the unique block (Definition~\ref{pbddef}) containing the points $a$ and $b$. There is
an edge coloured $a$ (resp. $b$) joining $x$ to $y$ if and only if $axy$ is a block (resp. $bxy$ is a block) and the colors alternate.
\end{definition}


\begin{definition}\label{defcycle} Fix any two points $a,b$ of a Steiner $m$-system $\Sscr = (P,L)$.  We can build an $(a,b)$-cycle, $C_k$, $c_1, c_2, \ldots c_{4k}$ of length $4k$ by demanding $R(a,c_{2n+1}, c_{2n+2})$ for $0\leq n \leq 2k$,  $R(b, c_{2n+2},c_{2n+3})$ for $0\leq n < 2k$,  and $R(b, c_{1},c_{4k})$.
\end{definition}

In the Steiner triple system case a triple $a,b,c_1$ with $c_1$ not on $(a,b)$ determines a unique cycle as described in Definition~\ref{defcycle}. For $m$-Steiner systems with $m>3$, we can choose such cycles but not uniquely. Note that the lines determined by the pairs of points $c_n,c_{n+1}$  in Definition~\ref{defcycle} must be distinct.

%
%
%

\begin{lemma}\label{2prim}  There are infinitely many mutually non-embeddable primitives in $\bK_0$ over a two-element set.  In fact, there are infinitely many mutually non-embeddable primitives in $\bK_0$ over the \mbox{empty set and similarly over a $1$-element set.}
\end{lemma}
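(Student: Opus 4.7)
The plan is to take the $(a,b)$-cycles from Definition~\ref{defcycle} as the primitives over the two-element base $\{a,b\}$. For each $k\geq 2$, let $C_k$ denote the $(a,b)$-cycle of length $4k$; it has $4k+2$ points and $4k$ three-point lines, so $\delta(C_k)=2=\delta(\{a,b\})$. The key step is the following cycle-graph analysis. For $\{a,b\}\subseteq X\subseteq C_k$, set $Y=X\setminus\{a,b\}$; the non-trivial lines in $X$ are exactly those $\{a,c_i,c_j\}$ or $\{b,c_i,c_j\}$ with $c_i,c_j\in Y$, and these are in bijection with the edges of the induced subgraph $G[Y]$ of the $4k$-cycle $G$ on $\hat C=\{c_1,\ldots,c_{4k}\}$. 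Since a proper induced subgraph of a cycle is a disjoint union of paths, for $Y \subsetneq \hat C$ one has $|E(G[Y])| = |Y| - c(G[Y])$, so
\[
\delta(X) \;=\; (|Y|+2) - |E(G[Y])| \;=\; 2 + c(G[Y]) \;\geq\; 3,
\]
while $\delta(X)=2$ only at the extremes $X=\{a,b\}$ and $X=C_k$. This simultaneously shows $\{a,b\}\leq C_k$ and that there is no proper intermediate strong extension, so $C_k$ is primitive over $\{a,b\}$; subsets not containing both of $a,b$ are handled by a simpler matching count, establishing $C_k\in\bK_0$.

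For mutual non-embeddability of $\{C_k:k\geq 2\}$, observe that $a,b$ are the only points of $C_k$ lying on more than two lines (they lie on $2k$ lines each, while each $c_i$ lies on exactly two). Hence any embedding $\phi\colon C_k\hookrightarrow C_{k'}$ must send $\{a,b\}$ into $\{a',b'\}$, and therefore restricts to an injective graph homomorphism from the $4k$-cycle $\hat C_k$ into the $4k'$-cycle $\hat C_{k'}$. An injective closed walk of length $4k$ in a $4k'$-cycle is a cycle subgraph of length $4k$, but a cycle graph contains no cycle other than itself, forcing $k=k'$.

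For the one-element and empty base cases we use analogous fully cyclic structures. For each $n\geq 7$, let $L_n$ be the linear space on $\mathbb{Z}/n\mathbb{Z}$ with lines $\{i,i+1,i+3\}$ for $i\in\mathbb{Z}/n\mathbb{Z}$; a routine difference check shows that each pair of points lies on at most one such line. Every point of $L_n$ lies on exactly three lines, so counting incidences (point, line) with point $\in S$ and line $\subseteq S$ in two ways yields $3\,m(S) \leq 3|S|$, hence $\delta(S) = |S|-m(S)\geq 0$ for every $S\subseteq L_n$. Thus $L_n\in\bK_0$ and $\delta(L_n)=0$. If $\delta(S)=0$ with $S\neq\emptyset$, then every $p\in S$ has all three of its lines fully in $S$, so $S$ is closed under the shifts $p\mapsto p\pm 1,\pm 2,\pm 3$; iterating forces $S=L_n$, and $L_n$ is primitive over $\emptyset$. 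For the one-element case, the disjoint union $\{a\}\cup L_n$ (with $a$ on no non-trivial line with any point of $L_n$) has $\delta=1=\delta(\{a\})$ and is primitive over $\{a\}$ by the same no-proper-$\delta=0$-subset argument applied to the $L_n$-component. Mutual non-embeddability in both families follows: any embedding $L_n\hookrightarrow L_m$ with $n<m$ would produce a proper $\delta=0$ subset of $L_m$, contradicting primitivity.

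The main obstacle is the cycle-graph identity driving $\delta(X)\geq 3$ for proper intermediate $X$ in $C_k$; this strict gap above $\delta(C_k)=2$ is what makes the extension primitive, and the analogous ``closure under neighbors'' argument plays the same role for the fully cyclic examples $L_n$. The non-embeddability and $\bK_0$-membership steps are then routine once the structural observations are in place.
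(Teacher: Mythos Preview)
Your argument is correct. For the two-element base you follow the same construction as the paper (the $(a,b)$-cycles $C_k$), but you supply considerably more detail: the paper simply asserts that ``if the cycle is broken, the $\delta$-rank goes up'' and does not address mutual non-embeddability at all, whereas your cycle-graph computation $\delta(X)=2+c(G[Y])$ makes primitivity explicit and your degree argument (each $c_i$ lies on exactly two lines, while $a,b$ lie on $2k\geq 4$) gives a clean proof that no $C_k$ embeds into any $C_{k'}$ with $k\neq k'$.

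For the empty and one-element bases your route is genuinely different. The paper stays inside the $C_k$ family: it places a new point $c$ on the line through $a,b$ and adds the two triples $R(c,c_1,c_{2k+1})$ and $R(c,c_{k+1},c_{3k+1})$, obtaining a structure $D_k$ with $4k+3$ points and $4k+3$ lines that is $0$-primitive over~$\emptyset$; deleting one of those two triples then yields a primitive over a single point. Your construction $L_n=(\mathbb Z/n\mathbb Z,\ \{i,i+1,i+3\})$ is an independent vertex-transitive family in which primitivity and non-embeddability both reduce to the single fact that a nonempty $\delta=0$ subset must be all of $L_n$. The paper's approach reuses the $C_k$ analysis already in hand; yours trades that economy for a self-contained incidence count and an immediate non-embeddability argument. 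One small gap to close in your one-element case: to conclude that $\phi(L_n)$ is a proper $\delta=0$ subset of $L_m$ you should note that the isolated point $a'$ cannot lie in $\phi(L_n)$, since otherwise $\delta(\phi(L_n)\setminus\{a'\})=-1$ would contradict $L_m\in\bK_0$.
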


\begin{proof}  Over any $a,b$ for each $k$ build an $(a,b)$-cycle $C_k$ , as in Definition~\ref{defcycle}. $C_k$ has $4k$ points and $(\{a,b\} \cup C_k)\in \bK_0$ has $4k$ 3-element lines. So $\delta(\{a,b\}\cup C_k))= 2 = \delta(\{a,b\})$.
Primitivity easily follows since if the cycle is broken, the
$\delta$-rank goes up. So $(\{a,b\},C_k))$ is a good pair whose isomorphism type we denote by $\boldsymbol{\gamma}_k$.

\smallskip
\noindent
To get primitives over $\emptyset$, let $c$ be on $ab$ and add the relations
$R(c,c_1,c_{2k+1}$) and $R(c,c_{k+1},c_{3k+1})$.  Now the entire structure $D_k$ has $4k+3$ points and $4k+3$ lines and can easily be seen to be  $0$-primitive over the empty set. (Note that for $k=1$, this is another avatar of the Fano plane.)

\smallskip
\noindent
Now remove one of the last two instances of $R$ and the result \mbox{is primitive
over $a$ or $b$.}
\end{proof}


%

%

\section{The Class $\bK_\mu$}\label{Kmusec}


We now introduce the new classes of structures needed to obtain strong minimality.  Recall that we have two classes: (i) $\bK_0$ is a class of finite structures; (ii) $\hat \bK_0$ is the universal class generated by $\bK_0$. The new class $\bK_\mu \subseteq \bK_0$  adds additional restrictions so that the generic model for $\bK_\mu$
is a strongly minimal linear space, and, in fact, a Steiner $k$-system for some $k$.
Using Definition~\ref{ax}, we  axiomatize the subclass $\bK^\mu_d$ of $\hat{\bK}_\mu$ (the universal class generated by $\bK_\mu$) of those models that are elementarily equivalent to   the generic for $\bK_\mu$. We extend Table~\ref{tab_classes} to a Table~\ref{table4} including the new classes defined in this section.

\begin{table}[h]
$$\begin{array}{|c|c|}
\hline
\text{Notation}     &  \text{References} \\
\hline
	 \mathbf{K}^*   &  \text{Definition~\ref{taulin}} \\
\hline
	 \mathbf{K}_0^* &  \text{Definitions~\ref{taulin} and \ref{basicnot}(\ref{K_0})} \\
\hline
	 \mathbf{K}_0   &  \text{Definition~\ref{K0def}} \\
\hline
	 \hat \bK_0     &  \text{Definitions~\ref{K0def} and \ref{basicnot}(\ref{hat})} \\
\hline
	 \bK_\mu        &  \text{Definition \ref{Kmu}(\ref{Kmuitem})} \\
\hline
	\hat{\bK}_\mu        &  \text{Definition \ref{Kmu}(\ref{Kmuhatitem})} \\
\hline
	 \bK^{\mu}_{d}  &  \text{Definition \ref{defd-cl}(\ref{mu-d})} \\
\hline
\end{array}
$$\caption{The classes of structures relevant to our construction.\label{table4}
}
\end{table}

 The following notation singles out the effect of the fact that our rank
  depends on line length rather than the number of occurrences of a relation.

\begin{notation}[Line length]\label{linelength} We write $\boldsymbol{\alpha}$ for the isomorphism
type of the good pair $(\{b_1,b_2\},a)$ with
 $R(b_1,b_2,a)$. 
 \end{notation}


	\begin{definition} \label{Kmu}	Recall the characterization of primitive extensions from Lemma 4.8 of \cite{BaldwinPao}.
	\begin{enumerate}[(1)]
	\item\label{itemKmu} Let $\Uscr$ be the collection of functions $\mu$ assigning to every isomorphism type $\beta$ of a good pair $(B,C)$ in $\bK_0$ (we write $\mu(B,C)$ instead of $\mu((B,C))$):
	\begin{enumerate}[(i)]
	\item an integer $\mu(\beta) = \mu(B,C) \geq \delta(B)$, if $|C-B|\geq 2$;
	\item an integer  $\mu(\beta) \geq 1$, if $\beta = \boldsymbol{\alpha}$ (cf. Notation~\ref{linelength}).
	\end{enumerate}

\item
For any good pair $(B,C)$ with $B \subseteq M$ and $M \in \hat \bK_0$,
 $\chi_M(B,C)$ denotes the number of disjoint copies of $C$ over $B$ in $M$. Of course,  $\chi_M(B,C)$ may be $0$.

	\item\label{Kmuitem} Let $\bK_{\mu}$ be the class of structures $M$ in $ \bK_{0}$ such that if $(B,C)$ is a good pair, then  $\chi_M(B,C)  \leq \mu(B,C)$.
	\item\label{Kmuhatitem} $\hat \bK_\mu$ is the universal class generated by $\bK_\mu$ (cf. Notation ~\ref{basicnot}(\ref{hat})).
\end{enumerate}	
\end{definition}

In \cite{BaldwinsmssII}, we change the set $\Uscr$ in various ways (and explore the combinatorial consequences of this change in the resulting generic model). In this paper, we assume $\mu \in \Uscr$ unless specified otherwise.


The value of $\mu(\boldsymbol{\alpha})$ is a fundamental invariant
 in determining the possible complete theories of generic structures; in particular we will see that it determines the length of every line in the generic and thus in any model \mbox{elementary equivalent to it.}

%
%




\begin{remark}\label{primline}{\rm We analyze the structure of extensions governed by good pairs with isomorphism type $\boldsymbol{\alpha}$
from Notation~\ref{linelength}. 
 Suppose $\{b_1,b_2,a\}  \subseteq F \in \bK_{\mu}$ with $R(b_1, b_2, a)$.
The $0$-primitive extensions $C$  of $B= \{b_1,b_2\}$ with $|(C-B)|=1$ are exactly the points on the line $\ell$ through $b_1,b_2$.
Any pair of points $e_1, e_2$ from $F$ that are on $\ell$ form a base witnessed by   $(\{e_1,e_2\},a)$ with
 $R(e_1,e_2,a)\wedge R(b_1,b_2,a) $.

\smallskip
\noindent
   Most arguments for amalgamation in Hrushovski constructions (e.g. \cite{baldwin, Holland5, Hrustrongmin, Zieglersm}) depend on a careful
 analysis of the location of the {\em unique} base of a good pair. Here, when $|\hat{C}| =1$, the uniqueness disappears and one must focus on the line rather than a particular base for it.}
  \end{remark}


There are two general approaches to showing existence of complete
 strongly minimal theories by the Hrushovki construction.  One divides the construction into two
 pieces, free and collapsed \cite{Poizatgeneric,Zieglersm}.  The final theory
 is taken as the sentences true in the generic model.
  The second, as the original \cite{Hrustrongmin},
 provides a direct construction of the strongly minimal set.
We choose here to follow the Holland's version of this approach. She  insightfully emphasised  axiomatizing the theory of the  class $\bK^{\mu}_{d}$ of $d$-closed structures \cite{Holland5},which we now define, by clearly identifiable $\pi_2$-sentences. This established the model completeness which was left open in  \cite{Hrustrongmin}.  In fact, we axiomatize the theory $T_\mu$ of the class $\bK^{\mu}_{d}$, prove it is strongly minimal, and then observe that the generic satisfies $T_\mu$. 
%
%
%

\begin{definition}\label{defd-cl} Fix the class $(\bK_0,\leq)$ of $\tau$-structures as defined in Definition~\ref{K0def}.
\begin{enumerate}[(1)]
\item
For $A\in \hat{\bK}_0$,  $X \subseteq_\omega A$ and $a \in A$, we let:

$$d_A(
X) = \min\{\delta(Y): X \subseteq Y \subseteq_\omega A\},$$
and
$$d_A(a/X) = d_A(aX) -d_A(X).$$

\item\label{item_defcl} For $M\in \hat{\bK}_\mu$, and $X \subseteq_\omega M$: $$\ddclM(X) = \{a \in M: d_M(aX) = d_M(X)\}.$$
For infinite $X$, $a \in \ddclM(X)$ if $a \in \ddclM(X_0)$ for some $X_0\subseteq_\omega X$.


\item  For $M\in \hat{\bK}_\mu$ and $X \subseteq M$, $X$ is $d$-closed in $M$ if $d(a/X)= 0$
implies $a \in X$ (equivalently, for all $Y \subseteq_{\omega} M-X$, $d(Y/X) >0$).

\item\label{mu-d} Let $\bK^{\mu}_{d}$ consist
of those $M\in \hat{\bK}_\mu$ such  that $M \leq N$ and $N \in \hat \bK_\mu$ imply $M$ is $d$-closed in $N$.

    \end{enumerate}
\end{definition}

The switch from $\delta$ to $d$ is designed to ensure that $X \subseteq Y $ implies $d(X) \leq d(Y)$; the submodularity of $d$ is verified as in e.g. \cite{BaldwinShiJapan, Holland5, Zieglersm}, and so the function $d$ is truly a dimension function, thus inducing a matroid structure.

\begin{fact}\label{geomexists} The $d$-closure operator $\ddclM$ (cf. Definition~\ref{defd-cl}(\ref{item_defcl})) induces a combinatorial pregeometry on any $M \in \hat \bK_\mu$.
\end{fact}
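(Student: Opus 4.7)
The plan is to derive the pregeometry axioms for $\ddclM$ from the standard properties of $d$: (i) monotonicity $X\subseteq Y\Rightarrow d(X)\leq d(Y)$, (ii) the unit-step bound $d(X\cup\{a\})\leq d(X)+1$, and (iii) submodularity $d(X\cup Y)+d(X\cap Y)\leq d(X)+d(Y)$. Finite character is already built into Definition~\ref{defd-cl}(\ref{item_defcl}), and reflexivity ($a\in\ddclM(\{a\})$) is immediate because $d(aX)\leq d(X)$ for $X=\{a\}$ witnessed by the minimum being realized at $X$ itself.

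First I would verify (i)--(iii) for $d$. Monotonicity follows since any finite witness $Z\supseteq Y$ realizing $\delta(Z)=d(Y)$ also contains $X$, bounding $d(X)\leq\delta(Z)=d(Y)$. The unit-step bound follows because if $Z\supseteq X$ realizes $d(X)=\delta(Z)$, then $Z\cup\{a\}$ is a candidate for $d(X\cup\{a\})$ and $\delta(Z\cup\{a\})\leq\delta(Z)+1$; the inequality holds in $\delta$ because adding one point increments $|Z|$ by $1$ and increases the line-nullity sum by the number of lines of $Z\cup\{a\}$ already based in $Z$ and now containing $a$, which is nonnegative. For submodularity, choose finite witnesses $Z_1\supseteq X$, $Z_2\supseteq Y$ with $\delta(Z_i)=d(Z_i)=d(X)$ or $d(Y)$ respectively (on self-sufficient subsets, $\delta$ realizes the minimum). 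Then $Z_1\cap Z_2\supseteq X\cap Y$ and $Z_1\cup Z_2\supseteq X\cup Y$, so $d(X\cup Y)+d(X\cap Y)\leq\delta(Z_1\cup Z_2)+\delta(Z_1\cap Z_2)\leq\delta(Z_1)+\delta(Z_2)=d(X)+d(Y)$, where the middle inequality is submodularity of $\delta$ (Lemma~\ref{delta_lemma}(2)).

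Next I would extract the pregeometry axioms. Monotonicity of $\ddclM$ is immediate. Idempotence reduces, by finite character, to showing that if every element of a finite set $Y$ lies in $\ddclM(X)$ then $\ddclM(Y\cup X)=\ddclM(X)$; adding elements of $d$-degree $0$ one at a time and using the unit-step bound together with monotonicity gives $d(X\cup\{y_1,\dots,y_n\})=d(X)$. The critical axiom is exchange. Suppose $a\in\ddclM(Xb)\setminus\ddclM(X)$. Then $d(Xab)=d(Xb)$ and $d(Xa)>d(X)$, so by the unit-step bound $d(Xa)=d(X)+1$. If $b\in\ddclM(X)$ then $b\in\ddclM(Xa)$ trivially; otherwise $d(Xb)=d(X)+1$, so $d(Xab)=d(X)+1=d(Xa)$, giving $b\in\ddclM(Xa)$ as required.

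The main obstacle is arranging the finite witnesses $Z_1, Z_2$ in the submodularity step so that $\delta$-submodularity can be applied; the standard device is to pass to the intrinsic closures $\mathrm{icl}_M(X)$ and $\mathrm{icl}_M(Y)$, on which $d$ and $\delta$ coincide, exactly as in \cite{BaldwinShiJapan,Holland5,Zieglersm}. Once submodularity and the unit-step bound are in hand, the verification of the pregeometry axioms is routine.
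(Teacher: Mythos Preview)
Your proposal is correct and follows precisely the standard route that the paper itself defers to: the paper does not give its own proof of this fact but simply remarks that monotonicity is immediate and that ``the submodularity of $d$ is verified as in e.g.\ \cite{BaldwinShiJapan, Holland5, Zieglersm}, and so the function $d$ is truly a dimension function, thus inducing a matroid structure.'' Your sketch unpacks exactly this standard argument, including the passage to intrinsic closures to transfer $\delta$-submodularity (Lemma~\ref{delta_lemma}(2)) to $d$, which is the one point requiring care.
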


%
%
%

%
%
%
%

We use good pairs to build our axiomatization,  $\Sigma_\mu$, of the theory of the class $\bK^{\mu}_{d}$.
 We write $\Sigma_\mu$  as the union of four sets of first-order $\tau$-sentences: $\Sigma^0_\mu$, $\Sigma^1_\mu$, $\Sigma^2_\mu$ and $\Sigma^3_\mu$.
Before listing them, we explain the origin of the third group: $\Sigma^2_\mu$. We would like
to just assert  the collection of {\em universal-existential} sentences: for all good pairs $(B,C)$ with $B
\subseteq M$, $\chi_M(B,C) = \mu(B,C)$.
Unfortunately, some  good
pairs may conflict with each others, and so, as far as we know,  the equality may fail for some good pairs when the base $B$ is not strong in the model.
%
%
Basically, this could happen because if  $(P,G)$ and $(Q,F)$ are good pairs with
$QF$ contained in $PG$ then realizing $(P,G)$ implies that $(Q,F)$ is automatically realized.  In particular, note that the $C$ of the good pair $(B,C)$ of Example~\ref{iclosedmatters} contains a new good pair $(B',C')$.

 The distinguishing  property of models $M \in \bK^{\mu}_{d}$ is that since every $0$-primitive extension over a finite {\em strong} subset of $M$ can be
embedded in $M$, by Lemma~\ref{getmax}, no proper $0$-primitive extension of $M$ is in $\hat{\bK}_\mu$.   In fact, this property characterizes the models that are elementarily equivalent to the generic.



Crucially, Holland\footnote{Holland provides a common framework for both {\em ab initio} constructions and fusions. The generality introduces considerations that are not relevant here, and our new predimension and the restriction to linear spaces introduce complications to her argument. Thus, for the convenience of the reader, we rephrased the argument for our situation.} expresses this failure  by a clearly motivated
$\pi_2$-sentence, which we expound in Remark~\ref{Hollsent}.
A salient point about the generic for $\bK_\mu$, denoted $\mathcal{G}_\mu$ (Notation~\ref{gennot}), is that
$\mathcal{G}_\mu \in \bK^{\mu}_{d}$.
 This fact is not used directly in the proof of strong minimality of $T_\mu$; we will observe it in Proposition~\ref{dcl}.

\medskip

%
%
%

One reason for the difficulty in the axiomatization is that the function $\mu$ is defined on arbitrary substructures, not strong substructures. {\em Restricting to strong substructure would inhibit if not prevent the $\pi_2$-axiomatization as  the strong substructure relation ($A \leq M$) is only type-definable.} Thus, in Lemma~\ref{isoqe}, we cannot assume $D$ is strong in both $E$ and $F$. In the following definition we rely on the terminology introduced in Definitions~\ref{prealgebraic} and \ref{Kmu}.

\begin{definition}\label{ax} $\Sigma_\mu$ is the union of the following four sets of sentences:
\begin{enumerate}[(1)]
\item $\Sigma^0_\mu$ is the collection of universal sentences axiomatizing $\bK_0$ as in Definition~\ref{K0def}.
   \item $\Sigma^1_\mu$ is the collection of {\em universal} sentences
that assert:
$$ B \subseteq M \;\; \Rightarrow \;\; \chi_M(B,C) \leq \mu(B,C).$$

    \item\label{item_sigma2} $\Sigma^2_\mu$ is a collection of {\em universal-existential} sentences $\psi_{B,C}$, depending on the good pair $(B,C)$, such   that for every occurrence of $B$ if $M \models \psi_{B,C} $ then for some good pair $(A,D)$ with $AD \subseteq BC$, any structure $N $ containing $MC$ satisfies  $\chi_N(A,D) > \mu(A,D)$ and so violates $\Sigma^1_\mu$.  See Lemma~\ref{proveax} for the explicit formulation of these sentences.
\item $\Sigma^3_\mu$ is the collection of existential sentences asserting that every line has length $\mu(\boldsymbol{\alpha}) +2$.

        \end{enumerate}
\end{definition}

%
%
%

          The  argument in Lemma~\ref{isoqe} that underlies both the axiomatization of $\bK^{\mu}_{d}$ and the amalgamation for $(\bK_\mu, \leq)$ differs from a mere amalgamation argument in  one significant way: $D \subseteq F$ but $D \leq F$
 is not assumed (on the other hand, $D \leq E$ {\em is} assumed).  We require several technical lemmas to address the difficulties arising from this fact.
 Our argument shows that if there is a model $M$ that
satisfies $\Sigma_\mu$, then we can find sentences to prevent
extensions in which $M$ is not $d$-closed.  The following example shows the
 necessity for the complications in proving Lemma~\ref{isoqe}: new primitives can occur in many ways.

 \begin{example}\label{iclosedmatters}{\rm
Construct the isomorphism type $\boldsymbol{\beta}$ of a good pair $(B,C)$ defined as follows.
  Let $B$ be two points $d_1,d_2$ and $C$ consists of six points $c_i$ for $i = 1, \dots 6$.
Let the non-trivial lines be $\{d_1, c_1,c_2, c_3\}$,$\{d_2, c_4,c_5, c_3\}$, $\{c_4,c_1,c_6\}$ and $\{c_5,c_2,c_6\}$.
So $C$ has 6 points and 4 lines each of nullity 1 so rank 2.
And $BC$ has 8 points and 4 lines, 2  of nullity 1 and 2 of nullity 2 so $BC$ also has rank 2. Check primitivity by inspection.


\smallskip
\noindent
Now turn this example on its head. Consider the following example of the setting of
Lemma~\ref{isoqe}. 
 Let $D = \{c_1,c_2\}$, 
   Let $F= D \cup \{c_3,c_4, c_5, c_6,d_2\}$,  
  and $E = D \cup \{d_1\}$. 
  $(D,E)$ is a good pair.  
   Amalgamating $F$ and $E$ over $D$
  we get a new
 realization $(B',C')$ of the {\em isomorphism type} $\boldsymbol{\beta}$ of the good pair $(B,C)$,
 which is not contained in either $D$ or $E$, but in $F \cup E$.
  This example does not violate
Lemma~\ref{isoqe} as $\mu(\boldsymbol{\alpha}) =2$ (and must be since there are $4$-element lines
in~$F$).}
 \end{example}

\begin{remark}\label{Hollsent}
 {\rm Example~\ref{iclosedmatters} shows that good pairs can conflict so we don't know  in general that a model $M$ of $T_\mu$ will satisfy $\chi_M(B,C) = \mu(B,C)$ for all good pairs $(B,C)$ that appear in $M$. We first prove in Lemma~\ref{isoqe}
  that each good pair
$(B,C)$  can only conflict with finitely many pairs $(B',C')$ and that that can happen only
if one pair is included in the other.
Following \cite{Holland5},
to guarantee  that $M
\in \bK^{\mu}_{d}$,  we assert by the formula $\psi_{B,C}$ (cf. Definition~\ref{ax}(\ref{item_sigma2})) that each  conflicting  pair $(A,D)$ is `almost realized' in $M$ so that adding points from $C$ contradicts $\Sigma^1_\mu$.}
\end{remark}

  Notice that in Lemma~\ref{isoqe} the fact that $(D,{E})$ is a good pair implies that $D \leq E$, and so we can use Lemma~\ref{canext} and thus consider $G = E \oplus_{{D}} F$. Notice that in Lemma~\ref{isoqe} the fact that $(D,{E})$ is a good pair implies that $D \leq E$, and so we can use Lemma~\ref{canext} and thus consider $G = E \oplus_{{D}} F$.
The following variant on \cite[Lemma 5.1]{Zieglersm} simplifies our original proof of Lemma~\ref{isoqe}.

\begin{lemma}\label{Zieg} Suppose $F \leq G$ and $F$ satisfies $\Sigma^0_\mu$. If there $C_i$ for $i<n$ that are pairwise disjoint over $B$ and the $(B,C_i)$ realize isomorphic good pairs.  Then at least one of the follows holds.
\begin{enumerate}
\item $B \subseteq F$
\item Some $C_i$ lies in $G-F$.
\end{enumerate}
\end{lemma}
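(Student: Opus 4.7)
The plan is to prove the contrapositive: assume $b \in B \setminus F$ (so $B \not\subseteq F$) and $C_i \cap F \neq \emptyset$ for every $i < n$ (so no $C_i$ lies in $G - F$), and derive a substructure $F \subseteq Y \subseteq G$ with $\delta(Y) < \delta(F)$, contradicting $F \leq G$.

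Take $H := B \cup \bigcup_{i<n} C_i \subseteq G$, set $Y := F \cup H$, and $X := F \cap H$. Submodularity of $\delta$ (Lemma~\ref{delta_lemma}(2)) applied to $F$ and $H$ gives $\delta(Y) - \delta(F) \leq \delta(H) - \delta(X)$, so it suffices to prove $\delta(H) < \delta(X)$. For the upper bound, iterated submodularity along the disjoint good-pair extensions yields $\delta(H) \leq \delta(B)$: $0$-primitivity gives $\delta(C_i/B) = 0$ and conditioning on a larger set only lowers rank, so $\delta(C_i / BC_0\cdots C_{i-1}) \leq 0$ for each $i$. For the lower bound on $\delta(X)$, goodness is the key input. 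Setting $B' := B \cap F \subsetneq B$, one combines monotonicity $\delta(C_i/B') \geq \delta(C_i/B) = 0$ with the nullity computation underlying Proposition~\ref{uniqueness_base}(2) to obtain $\delta(C_i/B') \geq 1$ (for each $i$, in the non-$\boldsymbol{\alpha}$ case). Since every $C_i \cap F$ is nonempty and the $C_i$ are pairwise disjoint over $B$, these strict excesses should aggregate to push $\delta(X)$ strictly above $\delta(B) \geq \delta(H)$, yielding the required strict inequality.

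The main obstacle is making the aggregation rigorous: the goodness bound $\delta(C_i/B') \geq 1$ concerns the full $C_i$, whereas $X$ only contains the partial piece $B' \cup \bigcup_i (C_i \cap F)$. When $|C_i - B| \geq 2$, Lemma~\ref{techforZiegler} (constraining how non-trivial lines through $B$ can meet $C_i - B$) together with the unique-base clause of Lemma~\ref{primchar}(2.2) supplies the structural control needed to transfer the inequality from $C_i$ to $B' \cup (C_i \cap F)$ and then sum disjointly across $i$. The $\boldsymbol{\alpha}$-type good pairs, where $|C_i - B| = 1$ and the unique-base property fails (cf.\! Remark~\ref{primline}), require separate handling because all the $c_i \in C_i \cap F$ lie on a single line based in $B$; there the rank drop must be extracted from the fact that $b$ contributes simultaneously to the nullity of that shared line through multiple disjoint good-pair realizations, so that the combined nullity jump on adding $b$ to $F$ exceeds the one new point contributed.
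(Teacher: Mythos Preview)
Your contrapositive setup and the reduction via submodularity to $\delta(H) - \delta(X) < 0$ are reasonable, but the aggregation obstacle you flag is a genuine gap that your sketch does not close. The difficulty is not just technical bookkeeping: your key inequality $\delta(C_i/B') \geq 1$ concerns the \emph{full} $C_i$ over the \emph{partial} base $B' = B \cap F$, whereas what $\delta(X)$ needs is control over the \emph{partial} pieces $C_i \cap F$. Once you try to sum, you must also control lines that $F$ may contain running between different $C_i$'s (or between $C_i$ and $B'$), and nothing in goodness or Lemma~\ref{primchar} bounds those. So the lower bound on $\delta(X)$ does not follow from what you have assembled.

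The paper's argument runs in the opposite direction and avoids the partial-piece problem entirely. It splits the $C_i$ into the $r$ copies fully contained in $F$ and the $s$ copies that straddle, and bounds $\delta(\,\cdot\,/F)$ directly in two steps. First, a fixed $b \in B \setminus F$ lies (by Lemma~\ref{primchar}(2.2)) on a line based in each $C_i \subseteq F$; these $r$ lines are based in $F$ but not in $B \cap F$, giving $\delta(B/F) \leq \delta(B/B\cap F) - r \leq \delta(B) - r$. Second, for each straddling $C_j$ one uses \emph{primitivity over the full $B$}: since $B \subsetneq B \cup (C_j \cap F) \subsetneq BC_j$ and there is no intermediate strong extension, $\delta\bigl(C_j \,/\, B \cup (C_j \cap F)\bigr) < 0$; by submodularity this gives $\delta(C_j - F \,/\, FB) \leq -1$, summing to $-s$. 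Combining, $\delta(B \cup \bigcup_j (C_j - F)\,/\,F) \leq \delta(B) - (r+s) = \delta(B) - n$, contradicting $F \leq G$ once $n > \delta(B)$ (which holds in the application since $n = \mu(B,C)+1 > \delta(B)$). The crucial move you are missing is to exploit primitivity with the \emph{full} $B$ in the conditioning set and the \emph{partial} $C_j \cap F$ added to it --- exactly the reverse of your attempt via goodness.
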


\begin{proof}
Suppose $B \nsubseteqq F$ and $C_i\subseteq F$  for $1 \leq i \leq r -1$ and $C_i \cap F \neq \emptyset$ and $C_i \cap (E-F) \neq \emptyset$ if $r \leq i \leq r+s -1$.  Thus, $r+s =n$. Then, for each $i<r$, $B$ contains a point that is on a line that is based on  $C_i$ and none of the other $C_j$.  Thus $$\delta(B/F) \leq \delta(B/B\cap F) -r \leq \delta(B) -r.$$
But for $j<s$, $\delta(C_j/B\cup (C_j \cap F)) < 0$  by the definition of primitive; so $\delta(\bigcup_{j<s}C_{r+j} C_j / FB) < -s$. So
$$\delta(\bigcup_{j<s}C_{r+j} C_j / F) \leq \delta(B) - (r+s)$$ as required.
\end{proof}

\begin{lemma}\label{isoqe} Let $F, E \models \Sigma_\mu$,  $D \subseteq F$, and suppose that $(D,{E})$ is a good pair (and so in particular $D \leq E$). 
 Now, if $G = E \oplus_{{D}} F$ and for some good pair $(B,C) \subseteq G$
  we have $\chi_G(B,C) > \mu(B,C)$, then:
\begin{enumerate}[(A)]
\item 
if $|C| = 1$, $C = \{c\}$ and $c$ is on a line based on some  $B'\subseteq D$;
    \item if $|C| \geq 2$ then $B\subseteq E$ and there exists $C'$ with $BC' \backsimeq BC$,
   with   $C' \subseteq (E-D)$.
Further, if $D\leq F$, there is a copy $C''$ of $C$ over $B$ with $C''  =(E-D)$, and $B \subseteq D$.
\end{enumerate}
\end{lemma}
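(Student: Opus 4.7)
The plan is to analyse the $n = \mu(B,C)+1$ pairwise disjoint witnesses $C_1,\dots,C_n$ of $\chi_G(B,C)>\mu(B,C)$ inside $G = E\oplus_D F$, and to leverage Lemma~\ref{Zieg} after noting $F \leq G$ (which follows from $D \leq E$ via Lemma~\ref{delta_lemma}(\ref{canext})), together with the fact that $E$ and $F$ both satisfy $\Sigma^1_\mu$.

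For Case (A), where $|C|=1$, Lemma~\ref{primchar}(2.1) forces the good pair to have type $\boldsymbol{\alpha}$: $B=\{b_1,b_2\}$ and each $c_i$ is a new point on the single non-trivial line $\ell$ through $b_1,b_2$ in $G$. Then $|\ell|\geq n+2 > \mu(\boldsymbol{\alpha})+2$. If $\ell$ were not based in $D$, Definition~\ref{defcanam} would confine $\ell$ entirely to $E$ or entirely to $F$, so $\chi_E$ or $\chi_F$ over $\{b_1,b_2\}$ would exceed $\mu(\boldsymbol{\alpha})$ and violate $\Sigma^1_\mu$ on that side. Hence $\ell$ is based in $D$, and any two-element $B' \subseteq \ell \cap D$ witnesses a base in $D$ through $c$.

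For Case (B), where $|C|\geq 2$, Lemma~\ref{Zieg} yields either $B \subseteq F$ or some $C_i \subseteq G-F = E-D$; the target is \emph{both} $B \subseteq E$ and the existence of a copy $C' \subseteq E-D$. Assume some $C_i \subseteq E-D$; if there were $b \in B \cap (F-D)$, then by Lemma~\ref{primchar}(2.2) $B$ is the unique base for $(B,C_i)$, so $b$ lies on a non-trivial line $\ell_b$ based in $C_i$ with both $C_i$-points in $E-D$. Since $b \in F-D$, Definition~\ref{defcanam} forces $\ell_b$ to be based in $D$ in $G$, producing hidden points $d_1,d_2 \in D\cap \ell_b$ outside $BC_i$. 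Submodularity applied to $X = BC_i \cup \{d_1,d_2\}$ across its $E$- and $F$-parts is to be played against $\delta(BC_i)=\delta(B)$ (which holds because $(B,C_i)$ is good), contradicting either $B \leq E$ or the primitivity of the good pair, and thus giving $B \subseteq E$. The complementary scenario ($B \subseteq F$ without a copy in $E-D$) is ruled out by the same mechanism, since $\chi_F(B,C)\leq \mu$ then forces a ``split'' copy crossing $F$ and $E-D$ that falls to the same obstruction.

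For the final refinement (when $D \leq F$), Lemma~\ref{delta_lemma}(\ref{canext}) now also gives $E \leq G$, so Lemma~\ref{Zieg} applied symmetrically forces $B \subseteq F$, and combined with $B \subseteq E$ this yields $B \subseteq D$. The only way to fit $\mu+1$ disjoint copies over $B \subseteq D$ while keeping $\chi_E(B,C),\chi_F(B,C)\leq \mu$ is for some copy to saturate $E-D$, yielding $C'' = E-D$. The main obstacle is the submodularity computation in Case (B): lines based in $D$ admit hidden $D$-point extensions in the amalgam, so a copy $BC_i$ may look internally consistent yet be inflated by concealed points; tracking how this inflation contradicts $\delta(BC_i) = \delta(B)$, and simultaneously controlling how many distinct hidden points the various $b \in B \cap (F-D)$ can force, is the delicate step.
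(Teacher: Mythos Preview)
Your Case~(A) is fine and matches the paper's argument.

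In Case~(B) you have the right setup but miss the clean finish and replace it with a vague ``delicate submodularity step'' that you never actually carry out. Once you have observed that the line $\ell_b$ through $b\in B\cap(F-D)$ and a point $c\in C_i\subseteq E-D$ must be based in $D$ (your ``hidden points'' $d_1,d_2\in D$), you are done: the point $c$ lies in $E-D$ and is on a line based in $D$. But $(D,E)$ is itself a good pair with $|E-D|\geq |C_i|=|C|\geq 2$, so by Lemma~\ref{primchar}(2.1) (or directly Lemma~\ref{techforZiegler}) no point of $E-D$ can lie on a line based in $D$. This contradiction gives $B\subseteq E$ immediately; no submodularity computation is needed. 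You also gloss over the other branch of the Zieg dichotomy ($B\subseteq F$): there the paper uses $F\leq G$ and primitivity of $C_i$ over $B$ to force any $C_i\not\subseteq F$ to lie entirely in $E-D$, and then notes that since $E$ is primitive over $D$, in fact $C_i=E-D$, whence $B\subseteq E$ by the same argument.

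Your treatment of the ``further'' clause is wrong. Applying Lemma~\ref{Zieg} with $E\leq G$ yields only ``$B\subseteq E$ or some $C_i\subseteq F-D$''; it does not give $B\subseteq F$. The paper argues differently: since $E\models\Sigma^1_\mu$, some $C'$ meets $F-D$; since $B\subseteq E\leq G$ (here is where $D\leq F$ is used), primitivity forces $C'\subseteq F-D$; then $D\leq F$ and Lemma~\ref{primchar}(2.2) give that the unique base of $C'$ lies in $D$, so $B\subseteq D$. Finally the copy $C_j\subseteq E-D$ is primitive over $D$ and based on $B\subseteq D$, hence $C_j=E-D$.
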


	\begin{proof}
 Since $G = E \oplus_{{D}} F$ we can use the notation and results of \ref{defcanam} and Lemma~\ref{delta_lemma}.~\ref{canext}.
 Note that $F, D, E$ are in $\hat \bK_\mu$ by the definition of the axioms $\Sigma_\mu$. Furthermore, $D \leq E$ and $E \in \bK_\mu$, by the definition of good pair.
 Let $\Cscr$ be a set of $\mu(B,C)+1$ disjoint copies of $C$ over $B$ in $G$, and list $\Cscr$ as $(C_1, ..., C_m)$, for $m = \mu(B,C)+1$.

 \noindent {\bf Case A}. $|C| = 1$.
\newline Then $(B,C)$ witnesses the isomorphism type $\boldsymbol{\alpha}$ from
  Definition~\ref{linelength}. So, there must be a line $\ell$ of size $\mu(B,C)+3$
 in $G$.
  Since $E$ and $F$ satisfy $\Sigma^1_\mu$, there must
   be $d \in F-D$ and $c \in E-D$ that lie on $\ell$.
  By Definition~\ref{defcanam}(2) of free almalgam $\ell$ must contain two points (say, comprising  $B'$) in
 ${D}$ that are connected to  $c \in E-D$.  Since $\{c\}$ is then primitive over $D$,  $E-D $ must be
  $ \{c\}$.  We finish the first claim. Note  $\chi_F(B',C)=\mu(B,C)$ as $\ell$ has $\mu({\boldsymbol{\alpha}})+2$ points in $F$.

 \noindent {\bf Case B}. $|C| \geq 2$.

 \begin{claim}\label{intoE} If $C \subseteq E-D$ is good over $B \subseteq F$, then $B \subseteq E$.
 \end{claim}

 \begin{proof} We show $B \subseteq E$. If not, there is a $b_1 \in B \cap (F-E)$  and since $C_j
 \subseteq (E-D)$ a line from $b_1$ to some $c\in C_j$.  Thus $c$ is on a line based on $D$ and so $C_j = E-D = \{c\}$. This contradicts $|C| \geq 2$
 so  $B \subseteq E$. \end{proof}

 We split into two cases depending on Lemma~\ref{Zieg}

 \noindent {\bf Case B.1}. Suppose $B \subseteq F$.

\noindent Since $\chi_F (B,C) \leq\mu(B,C)$, there must be a $C_i \in \Cscr$ that intersects $G-F = E-D$.
So, since $F\leq G$ and $C/B$ is primitive, $C_i \subseteq  G-F = E-D$.  But, since $E$ is primitive over $D$, $FE$ is primitive over $F$, so
$C_i = E-D$. By Case 2.2 of Lemma~\ref{primchar}, 
 $B$ is the only  subset of $F$ on which $C_i$ is based.
   Hence, as $BC_i \subseteq E$, we finish Case B.1 without using the supplemental hypothesis for the `further' in Case (B).

\smallskip

%

\smallskip
 \noindent {\bf Case  B.2}.  Suppose $B \not \subseteq F$.
By Lemma~\ref{Zieg}, we have the main claim; some $C_j$ lies in $E-D$. We prove the further.
 There must be a $C'\in \Cscr$ that intersects $F-D$, since $E \in \bK_\mu$.
  But $C'$ cannot split over $E$ since, $B \subseteq E$ by Claim~\ref{intoE}. As we now assume $D\leq F$, $E \leq G$; so $C' \subseteq (F-D)$. But then $C'$ is based on  a unique $B' \subseteq D$ since $D\leq F$. So $B = B'\subseteq D$.  But then $C_j$  is primitive over $D$ and based on $B\subseteq D$, and so $C_j = E-D$. Hence, $C_j$ is the required $C''$.
  This concludes the proof of Lemma~\ref{isoqe}.
\end{proof}

   The argument  for Lemma~\ref{getmap} differs from the standard only in requiring a special case for extending a line.

\begin{lemma}\label{getmap}
Suppose $A$ and $A'$ are  primitive over $Y$ with $\delta(A/Y) = \delta(A'/Y) = 0$ and both are based
on $B \subseteq Y$ with isomorphic good pairs $(B,\hat A)$ and  $(B,\hat A')$, where $\hat A = A-Y$ and $\hat A' = A'-Y$.  Then the map fixing $Y$ and taking $A$ to $A'$ is an isomorphism.
\end{lemma}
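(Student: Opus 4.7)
The plan is to define the candidate map $\phi\colon A \to A'$ in the obvious way and verify it is a $\tau$-isomorphism by a short case analysis on the triples in $R$. Let $g\colon B \cup \hat A \to B \cup \hat A'$ be the given isomorphism of good pairs (so $g\restriction B = \mathrm{id}_B$), and define $\phi$ by $\phi\restriction Y = \mathrm{id}_Y$ and $\phi\restriction \hat A = g\restriction \hat A$. Since $g$ fixes $B$, $\phi$ is a well-defined bijection $A \to A'$. It remains to show, for all distinct $x_1,x_2,x_3 \in A$, that $A \models R(x_1,x_2,x_3)$ iff $A' \models R(\phi x_1,\phi x_2,\phi x_3)$. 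I split into four cases according to how the $x_i$ are distributed between $Y$ and $\hat A$.

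The two easy cases are (i) all three $x_i$ lie in $Y$, which is immediate since $\phi$ is the identity on $Y$; and (ii) all three lie in $\hat A$, which follows because $g$ is a $\tau$-isomorphism on $B \cup \hat A \to B \cup \hat A'$. The substantive content is in the mixed cases. Here I invoke Lemma~\ref{primchar}, applied with $Y$ playing the role of the base set and $A$ the primitive extension, noting $\delta(A/Y)=0$.

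In case $|\hat A| \geq 2$ (Case 2.2 of Lemma~\ref{primchar}), $B \subseteq Y$ is the unique base, and every nontrivial line of $A$ that meets $\hat A$ meets $Y$ only inside $B$. Consequently, any triple of the form $R(y_1,y_2,a)$ with $y_1,y_2 \in Y$ and $a \in \hat A$, or of the form $R(y,a_1,a_2)$ with $y\in Y$ and $a_1,a_2 \in \hat A$, forces the $Y$-entries to lie in $B$; hence both cases (iii) and (iv) are entirely determined by the $\tau$-structure on $B \cup \hat A$ and are preserved by $g$. In case $|\hat A| = 1$, write $\hat A = \{a\}$ and $\hat A' = \{a'\}$ with $a' = g(a)$. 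By Case 2.1 of Lemma~\ref{primchar}, $a$ lies on the unique line $\ell$ of $A$ extending the line of $Y$ based in $B$, and similarly $a'$ lies on the unique line $\ell'$ of $A'$ extending the line of $Y$ based in $B$. Since $Y$ and $B$ are the same on both sides and two points determine a unique line, the line of $Y$ based in $B$ is literally the same subset of $Y$; thus for $y_1,y_2 \in Y$, the triple $R(y_1,y_2,a)$ holds in $A$ iff $y_1,y_2$ lie on that common line, iff $R(y_1,y_2,a')$ holds in $A'$. Case (iv) does not occur here since $|\hat A|=1$.

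Combining the cases, $\phi$ preserves and reflects $R$ on all triples, so it is a $\tau$-isomorphism. The only real pitfall is making sure that $R$-triples involving elements of $Y - B$ together with points of $\hat A$ are correctly handled; this is exactly the content of Lemma~\ref{primchar} Cases 2.1 and 2.2, which tell us that no such nontrivial collinearities exist outside what is already encoded in the good pair $(B,\hat A)$.
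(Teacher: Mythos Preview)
Your proof is correct and follows essentially the same approach as the paper's: both arguments split on $|\hat A|=1$ versus $|\hat A|\geq 2$ and invoke Lemma~\ref{primchar} (Cases 2.1 and 2.2) to see that every $R$-triple mixing $Y$ and $\hat A$ is already determined by the good-pair structure on $B\cup\hat A$. Your write-up is slightly more explicit in setting up the four-way case split on triples, but the substance is identical.
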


\begin{proof}
There are two cases depending on the cardinality of $\hat A$.
\newline {\bf Case 1}. $|\hat A| = 1$.
\newline As in case 1.1 of Lemma~\ref{primchar} let $\ell$ be a line which is based in $Y$ and suppose $\hat A = \{a\}, \hat A' =\{b\}$ are each on $\ell$ but neither is in $Y$.  Then, since both $a$ and $b$ are $R$-related only to the points on $\ell$  the map fixing $Y$ and taking $a$ to $b$ is an isomorphism.
\newline {\bf Case 2}. $|\hat A| \geq 2$.
\newline Applying Lemma~\ref{primchar}(2.2), there is a unique base $B$ (the $B_0$ of the lemma) and there is  a bijection $f$ between $|\hat A|$ and $|\hat A'|$ such that for each $b \in B$, $R(c_1,c_2,b)$ if and only $R(f(c_1),f(c_2),b)$. The union of that map with the identity on $Y$ is as required.
\end{proof}

We now show that any element of $\hat \bK_\mu$ (not just $\bK_\mu$) can be amalgamated (possibly with identifications) over a (necessarily finite) {\em strong} substructure $D$ of $F$ with a strong extension of $D$ to a member  $E$  of $\bK_\mu$. 

  \begin{conclusion}\label{conclude}  If $D \leq F \in \hat \bK_\mu$ and $D \leq E \in \bK_\mu$ then there is $G\in \hat \bK_\mu$ that embeds (possibly with identifications) both $F$ and $E$ over $D$.  Moreover, if $F \in \bK^\mu_d$, then $F = G$. In particular, $(\bK_\mu, \leq)$ has the amalgamation property, and there is a generic structure $\mathcal{G}_\mu \in \hat{\bf K}_\mu$ for $(\bK_\mu, \leq)$.
%

  \end{conclusion}

  \begin{proof}  Let $D,E,F $ satisfy the hypotheses. Clearly, we can assume that $D \leq E$ is a primitive extension.
   If $\delta(E/D) =k > 0$,
   Lemma~\ref{primchar} implies $k = 1$ and $E-D = \{a\}$. Now the disjoint amalgamation $E\oplus_D F$ is in $\hat \bK_\mu$ since $a$  is not $R$-related to any other element.
So, we are reduced to $0$-extensions and can refine the induction to
   assume $(D,E)$ is a good pair.
 %
We have an amalgam $G \in \hat \bK_0$ such that $G = E \oplus_D F$, $F\leq G$, and $E\leq G$. 
If $G \in \bK_\mu$, we finish.  If not,
there is an isomorphism type $\beta$ of a good pair $(B,C)$ and $(C_i: i < m)$ with $(B,C_i)\subseteq G$ realizing $\beta$ and such that $m > \mu(B,C)$. We now make a case distinction and show that in both cases we can embed $E$ into $F$ over $D$.
\newline {\bf Case 1}. $|C| =1$.
\newline By  Lemma~\ref{isoqe}(A) and by primitivity of $D \leq E$, we
 have that $|E-D| = 1$. But $E \in \bK_\mu$, and so $\chi_D(\boldsymbol{\alpha}) <
  \mu(\boldsymbol{\alpha})$, from which it follows that the element of $E-D$  can be embedded in $F-D$ over $B$.
\newline {\bf Case 2}. $|C| > 1$.
\newline The `further' clause of Lemma~\ref{isoqe}(B) shows that there must be a copy of $C$ equal to $E-D$.
Thus, using Lemma~\ref{getmap} and the argument in the last paragraph of the proof of Lemma~\ref{isoqe} (there is a copy of $C$ in $F-D$), we can conclude that we can embed $E$ into $F$ over $D$.

\smallskip
\noindent
For the `moreover', note that $M \in \bK^\mu_d$ implies that every proper extension $N$ of $M$ with $N \in \hat \bK^\mu_d$ satisfies $d(N/M) > 0$.
\end{proof}

\begin{corollary}\label{getmax} If $M \in \hat \bK^\mu_d$ and $B \leq M$, then for any
good pair $(B,C)$ with $C\cap M =B$, we have:
$$\chi_M(B,C) =\mu(B,C).$$
\end{corollary}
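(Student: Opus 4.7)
The plan is to prove each inequality separately. The upper bound $\chi_M(B,C) \leq \mu(B,C)$ is immediate: since $M \in \hat \bK_\mu$, Definition~\ref{Kmu}(\ref{Kmuitem}) bounds the count in every finite substructure. For the lower bound I argue by contradiction, assuming $\chi_M(B,C) < \mu(B,C)$, and split on whether $\hat C := C - B$ has exactly one element or at least two.

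If $|\hat C| = 1$, then $(B,C)$ has isomorphism type $\boldsymbol{\alpha}$ (Notation~\ref{linelength}), so $\chi_M(B,C)$ equals $|\ell|_M - 2$ for the line $\ell$ through $B$. Since $M \in \bK^\mu_d$ satisfies $\Sigma^3_\mu$, every line of $M$ has length exactly $\mu(\boldsymbol{\alpha}) + 2$, giving $\chi_M(B,C) = \mu(\boldsymbol{\alpha}) = \mu(B,C)$, contradicting our assumption.

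The main case is $|\hat C| \geq 2$. The idea is to engineer a proper strong extension of $M$ within $\hat \bK_\mu$ in which a fresh copy of $\hat C$ has $\delta$-rank $0$ over $M$, violating the $d$-closure of $M$. Concretely, after choosing a disjoint copy of $C$ over $B$, I form the free amalgam $G = C \oplus_B M$ (Definition~\ref{defcanam}). Since $B \leq M$ by hypothesis and $B \leq C$ because $(B,C)$ is good, Lemma~\ref{delta_lemma}(\ref{canext}) yields $G \in \bK_0$, $M \leq G$, and $\delta(\hat C / M) = \delta(C/B) = 0$. The main obstacle is verifying $G \in \hat \bK_\mu$, i.e., that inserting a fresh $\hat C$ does not push any good pair count past its $\mu$-bound. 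Suppose some $(B',C')$ with $\chi_G(B',C') > \mu(B',C')$ exists; I apply Lemma~\ref{isoqe} with $D = B$, $E = C$, $F = M$ (valid since $D \leq F$). In the line case $|C'| = 1$, the violation would produce a line in $G$ of length exceeding $\mu(\boldsymbol{\alpha}) + 2$; but Lemma~\ref{techforZiegler} guarantees that, with $|\hat C| \geq 2$, no line in $C$ meeting $\hat C$ is based in $B$, so no line of $M$ gets extended through $\hat C$ in the free amalgam, and every line of $G$ is a line of $M$ or a line of $C$, bounded by $\Sigma^3_\mu$ and by $C \in \bK_\mu$ respectively. In the case $|C'| \geq 2$, the ``further'' clause of Lemma~\ref{isoqe}(B) gives a copy $C'' = \hat C$ of $C'$ over $B'$ with $B' \subseteq B$; the uniqueness of base for $(B, \hat C)$ from Lemma~\ref{primchar}(2.2) then pins $B' = B$ and $(B',C') \cong (B, \hat C)$, so $\chi_G(B',C') = \chi_M(B,\hat C) + 1 \leq \mu(B,\hat C)$ by the contradiction hypothesis, a contradiction. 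Hence $G \in \hat \bK_\mu$.

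Since $M \leq G$ and $G \neq M$, but $d_G(\hat C / M) \leq \delta(\hat C/M) = 0$, any point of $\hat C$ witnesses a failure of $d$-closure of $M$ in $G$, contradicting $M \in \bK^\mu_d$. The hard part is the amalgamation verification above: coupling the line-length control from Lemma~\ref{techforZiegler} and $\Sigma^3_\mu$ with the structural dichotomy of Lemma~\ref{isoqe} is what prevents the insertion of one fresh copy of $\hat C$ from introducing any other forbidden configuration.
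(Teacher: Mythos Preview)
Your argument follows the same strategy as the paper's: form the free amalgam $G = C \oplus_B M$, observe that $G \in \hat\bK_\mu$ together with $M \leq G$ and $\delta(\hat C/M)=0$ would contradict $d$-closure of $M$, and then use Lemma~\ref{isoqe} (with $D=B$, $E=C$, $F=M$, noting $D\leq F$) to pin the $\Sigma^1_\mu$-violation in $G$ on the freshly added copy of $\hat C$ over $B$. The paper compresses all of this into a few lines and simply cites the ``further'' clause of Lemma~\ref{isoqe}(B); your version spells out the case analysis (in particular the $|C'|=1$ subcase via Lemma~\ref{techforZiegler}, and the identification $B'=B$ via the unique-base clause of Lemma~\ref{primchar}(2.2)), which the paper leaves implicit. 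So the two proofs are essentially the same, with yours being the more explicit rendering.

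Two small points to clean up. First, in the $|\hat C|=1$ case you invoke $\Sigma^3_\mu$ to conclude that every line of $M$ has length $\mu(\boldsymbol{\alpha})+2$; but the equivalence of $\bK^\mu_d$ with the models of $\Sigma_\mu$ is Lemma~\ref{proveax}, which in the paper's ordering comes \emph{after} this corollary. This is not circular (Lemma~\ref{proveax} does not cite Corollary~\ref{getmax}), but to respect the order you can simply drop the separate $|\hat C|=1$ case and run your free-amalgam argument uniformly: when $|\hat C|=1$, Lemma~\ref{isoqe}(A) forces the violating line to be the one through $B$, giving $\chi_M(B,C)\geq \mu(\boldsymbol{\alpha})$ directly, while case~(B) is vacuous since $|E-D|=1$. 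Second, in your $|\hat C|\geq 2$, $|C'|=1$ subcase you again write ``$\Sigma^3_\mu$'' and ``$C\in\bK_\mu$'' to bound line lengths; what you actually need (and have) is just $M\in\hat\bK_\mu$, together with the observation from Lemma~\ref{techforZiegler} that no line of $G$ meeting $\hat C$ is based in $B$, so in the free amalgam no line of $M$ is extended---hence every line of $G$ already lies in $M$ or in $C$, and the overlong line required by Lemma~\ref{isoqe}(A) cannot arise.
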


\begin{proof} By  Conclusion~\ref{conclude}, since $B\leq M$, there is an amalgamation in $\hat \bK_\mu$ of
$C$ and $M$ over $D$. But, $M$ and $MC$ cannot be freely amalgamated over $B$. As, in a putative amalgam $N$, $d_N(C/M)=0$. Whence since $M$ is $d$-closed, $C\subseteq M$, contradicting free amalgamation. By the `further' of Lemma~\ref{isoqe}(B).2, the violation of $\Sigma_1$ is given by the new copy of the pair $(B,C)$, and so $\chi_M(B,C) =\mu(B,C)$.
   \end{proof}

   \begin{question}  Is $D\leq M$ essential for the conclusion of Lemma~\ref{getmax}? {\rm  A complicated example showing necessity of this hypothesis in the fusion case (and thus the gap in Poizat's `proof' of existence of a
    Morley rank 2 expansion of a plane by a unary predicate) appears in \cite[\S 4]{BaldwinHolland}. The proof of Corollary~\ref{getmax} relies on that assumption  both in using the
   `further' of Lemma~\ref{isoqe} and  Conclusion~\ref{conclude}. Looking carefully at the proof of Lemma~\ref{isoqe} reveals that if there is a counterexample $(D,E)$, the failure is witnessed by a $(B,C)$ with $m = \mu(B,C) +1$ such that $B \subseteq E$, $B\nsubseteq F$, no $C_i \subseteq F$ and some $C_i \subseteq E-D$. Thus, $|E-D| \geq |C| + m$. It is unclear whether $B$ might be contained in $D-E$. Thus, we need something far different from Example~\ref{iclosedmatters} where we showed new isomorphism types of good pairs could appear in an amalgam but $|E-D|=1$.
   }
   \end{question}

%

\begin{notation}\label{gennot} Let $\Gscr_\mu$ denote the generic for $(\bK_{\mu} \leq)$ (cf. Conclusion~\ref{conclude}).
\end{notation}

Notice that it follows from Corollary~\ref{conclude} that every member of $\bK_\mu$ is strongly embeddable in $\Gscr_\mu$.

\begin{definition}\label{def_rich} Let $(\mathbf{K}_0, \leq)$ be as in the
context of Fact~\ref{conclusion_ax}. The structure $M$ is rich for the class
 $(\hat {\mathbf{K}_0}, \leq)$ (or $(\hat {\mathbf{K}_0}, \leq)$-rich) if for any
  finite $A,B \in \mathbf{K}_0$ with  $A \leq M$ and $A\leq B$ there is a
  strong embedding of $B$ into $M$ over $A$.
\end{definition}

%
%
%
%
%


Clearly, a generic is rich.  Even more,
since the definition of $\bK^{\mu}_{d}$ requires the embedding only of finite
extensions with dimension $0$, we have:

\begin{proposition}\label{dcl} Every rich model, and so in particular $\mathcal{G}_\mu$, is in $\bK^{\mu}_{d}$.
\end{proposition}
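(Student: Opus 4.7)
The plan is to argue by contradiction that richness of $M$ forces $M$ to be $d$-closed in every $\hat\bK_\mu$-extension. Suppose $M \leq N \in \hat\bK_\mu$ and some finite $Y \subseteq N \setminus M$ satisfies $d_N(Y/M) = 0$. By a standard closure argument, one finds a finite $X \leq M$ and a finite $A \leq N$ with $X \subseteq A$, $Y \subseteq A$, $A \cap M = X$, and $\delta(A/X) = 0$: start with any $X_0 \leq M$ satisfying $d_N(Y/X_0) = 0$, put $A := \mathrm{icl}_N(X_0 Y)$, and if $A \cap M \supsetneq X_0$ replace $X$ by $\mathrm{icl}_M(A \cap M)$ and iterate; since the intermediate icl-sets are finite and $X$ strictly grows at each step, this stabilizes. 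Then $A \setminus X \subseteq N \setminus M$.

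Decompose $X \leq A$ into primitive steps. Since $X \leq A$, every step contributes $\delta$-increment $\geq 0$, and the total is zero, so every step is $0$-primitive. Fix a good pair $(B_0, C_0)$ witnessing the first step; then $B_0 \subseteq X$ and $C_0 \subseteq A \setminus X \subseteq N \setminus M$ is a copy of $C_0$ over $B_0$ lying entirely outside $M$. The strategy is now to use richness to force $\chi_M(B_0, C_0) = \mu(B_0, C_0)$; the further copy $C_0 \subseteq N \setminus M$, being disjoint over $B_0$ from every copy inside $M$, then gives $\chi_N(B_0, C_0) \geq \mu(B_0, C_0) + 1$, contradicting $N \in \hat\bK_\mu$.

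To obtain $\chi_M(B_0, C_0) = \mu(B_0, C_0)$, induct on $j = \chi_M(B_0, C_0) < \mu(B_0, C_0)$. Set $M' := \mathrm{icl}_M\big(B_0 \cup \bigcup_{i \leq j} C^{(i)}\big) \leq M$ and form the free amalgam $M'' := M' \oplus_{B_0} (B_0 \cup C_{new})$ adjoining a fresh copy $C_{new}$ of $C_0$ over $B_0$. By Lemma~\ref{delta_lemma}(3), $M' \leq M''$. Once we verify $M'' \in \bK_\mu$, richness strongly embeds $M''$ into $M$ over $M'$, producing the $(j{+}1)$-th copy and completing the induction.

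The main obstacle is verifying $M'' \in \bK_\mu$, and the argument splits according to Lemma~\ref{primchar}. When $|C_0| \geq 2$, Lemma~\ref{primchar}(2.2) implies no nontrivial line through a point of $C_0$ and a point of $B_0$ is based in $B_0$, so by Definition~\ref{defcanam} the free amalgam extends no line of $M'$; the only good-pair count that changes is $\chi(B_0, C_0)$, which rises from $j$ to $j{+}1 \leq \mu(B_0, C_0)$, while sub-good-pairs internal to $B_0 \cup C_{new}$ are automatically controlled because $B_0 \cup C_{new} \cong B_0 \cup C_0 \subseteq A \in \bK_\mu$. When $|C_0| = 1$, the pair has isomorphism type $\boldsymbol{\alpha}$ and $C_0 = \{c\}$ lies on the line $\ell$ determined by $B_0$; here the induction stalls at $|\ell \cap M| = \mu(\boldsymbol{\alpha}) + 2$, but this is exactly what is needed. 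Richness pushes $\ell$ in $M$ to its maximal length $\mu(\boldsymbol{\alpha}) + 2$ by iteratively adding points via single-point $\boldsymbol{\alpha}$-extensions (each of which stays in $\bK_\mu$ while there is room), and then the extra point $c \in N \setminus M$ on $\ell$ forces $|\ell| \geq \mu(\boldsymbol{\alpha}) + 3$ in $N$, directly contradicting $N \in \hat\bK_\mu$ via the line-length bound.
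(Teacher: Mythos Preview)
Your overall strategy---locate a good pair $(B_0,C_0)$ with $C_0$ outside $M$, use richness to accumulate copies inside $M$, and contradict the $\mu$-bound---matches the paper's. The difference, and the weak point in your argument, is the step where you verify $M'' = M' \oplus_{B_0}(B_0\cup C_{\mathrm{new}}) \in \bK_\mu$. Your claim that ``the only good-pair count that changes is $\chi(B_0,C_0)$'' is not justified: nothing you have said rules out a good pair $(B,C)$ with $B$ meeting $C_{\mathrm{new}}$ and some copy of $C$ straddling $M'\setminus B_0$ and $C_{\mathrm{new}}$. Handling this essentially requires the machinery of Lemma~\ref{isoqe}, and even then one wants $B_0 \leq M'$ (to get $B_0 C_{\mathrm{new}} \leq M''$ and run the symmetric argument), which you have not arranged---your $B_0$ is just the good-pair base inside $X$, not a strong subset of $M'$.

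The paper bypasses all of this with one observation: instead of forming an abstract free amalgam and checking membership in $\bK_\mu$, use the structure already sitting inside $N$. Setting $B_0 = \mathrm{icl}_M(B)$ and inductively $B_{i+1} = \mathrm{icl}_M(B_i C_i)$, each $B_i \cup C$ equipped with the substructure inherited from $N$ is automatically in $\bK_\mu$ (it is a finite substructure of $N \in \hat\bK_\mu$), and $B_i \leq B_i C$ follows from $B_i \leq M \leq N$. Richness then applies directly with no further verification, and the embedded copy $C_{i+1}$ is disjoint from $B_i \supseteq C_1,\ldots,C_i$. After $\mu(B,C)+1$ iterations, $M$ itself violates $\Sigma^1_\mu$. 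Your argument is easily repaired by replacing your free amalgam $M''$ with $M' \cup C_0$ carrying the $N$-induced structure; but that is exactly the paper's move.
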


\begin{proof} We show that
every $(\bK_\mu,\leq)$-rich model $M$  is in $\bK^{\mu}_{d}$. Suppose for contradiction that there is an  $N \in \hat \bK_\mu$  with
$M \leq N$ and there is a $C \subseteq (N-M)$ such that $C$ is $0$-primitive over $M$.
By Lemma~\ref{primchar}, $C$ is based on some finite $B \subseteq M$. Since
$M \leq N$, $C$ is also primitive over $B_0 = \mathrm{icl}_M(B)$.
Since $M$ is rich there is a copy $C_1\subseteq M$ of $C$ over $B_0$. Now
 let $B_1 = \mathrm{icl}_M(C_1)$. Applying richness again we can choose another embedding  $C_2$  of $C$ into $M$ over $B_1$.
Continuing in this fashion, after less than $\mu(B,C) + 1$ steps we have contradicted $M \in \hat \bK_\mu$.
\end{proof}

%

%

	Now we explain the interaction between  the axioms $\Sigma^1_\mu$ and  $\Sigma^2_\mu$.  No extension of a model of $\Sigma^2_\mu$ by a good pair is in $\hat \bK_\mu$. This will yield the axiomatization of the theory of the $d$-closed
structures and thus of the generic (by Proposition~\ref{dcl}).

\begin{lemma}\label{proveax} The family of first-order sentences
 $\Sigma_\mu$ (Definition~\ref{ax}) defines the class of $d$-closed models.
\end{lemma}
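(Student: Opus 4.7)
The plan is to prove both inclusions: $M \in \bK^\mu_d$ implies $M \models \Sigma_\mu$, and conversely. The forward direction uses the definition of $d$-closedness to force each axiom, while the reverse direction uses the axioms to rule out any $0$-primitive extension of $M$ inside $\hat\bK_\mu$.

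For the forward direction, fix $M \in \bK^\mu_d$. Axioms $\Sigma^0_\mu$ and $\Sigma^1_\mu$ are universal and follow from $M \in \hat\bK_\mu$. For $\Sigma^3_\mu$, note that $\Sigma^1_\mu$ gives $|\ell| \leq \mu(\boldsymbol{\alpha})+2$ for every line $\ell$ in $M$; conversely, if $|\ell| < \mu(\boldsymbol{\alpha})+2$, then adjoining a fresh point $c$ on $\ell$ yields an extension in $\hat\bK_\mu$ (the only new good pairs are additional copies of $\boldsymbol{\alpha}$ over pairs from $\ell$, and none exceeds $\mu(\boldsymbol{\alpha})$) with $d(c/M)=0$, contradicting $d$-closedness. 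For $\Sigma^2_\mu$ at a good pair $(B,C)$ with $|C|\geq 2$, fix an occurrence of $B$ in $M$ and form the free amalgam $N := M \oplus_B BC^*$, where $BC^*$ is a fresh copy of $BC$. Since $(B,C)$ is a good pair, $B \leq BC^*$, so Lemma~\ref{delta_lemma} gives $M \leq N$ with $d_N(C^*/M)=0$. Because $M$ is $d$-closed, $N \notin \hat\bK_\mu$, so some good pair exceeds its $\mu$-bound in $N$; Lemma~\ref{isoqe} forces any such pair $(A,D)$ to sit inside $BC^*$ up to the canonical identification, and this $(A,D)$ is precisely the witness the sentence $\psi_{B,C}$ demands at this occurrence of $B$.

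For the reverse direction, assume $M \models \Sigma_\mu$. Axioms $\Sigma^0_\mu$ and $\Sigma^1_\mu$ together place every finite substructure of $M$ in $\bK_\mu$, so $M \in \hat\bK_\mu$. Suppose toward contradiction that $M \leq N \in \hat\bK_\mu$ and some $c \in N-M$ satisfies $d_N(c/M)=0$. By choosing a finite witness and minimizing $\delta$, we may assume there are finite $B \subseteq M$ and $C \subseteq N-M$ with $c \in C$ such that $C$ is $0$-primitive over $B$. Lemma~\ref{primchar} yields two cases. If $|C|=1$, then $c$ lies on a line $\ell$ based in $B$, but $\Sigma^3_\mu$ gives $|\ell \cap M| = \mu(\boldsymbol{\alpha})+2$, so $|\ell \cap N| \geq \mu(\boldsymbol{\alpha})+3$ and $N$ violates $\Sigma^1_\mu$, contradicting $N \in \hat\bK_\mu$. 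If $|C|\geq 2$, pick $B$ to be a base so that $(B,C)$ is a good pair; applying $\psi_{B,C}$ at this occurrence of $B$ yields a good pair $(A,D)$ with $AD \subseteq BC$ and $\chi_N(A,D) > \mu(A,D)$, again contradicting $N \in \hat\bK_\mu$. Hence $M \in \bK^\mu_d$.

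The main obstacle is the verification of $\Sigma^2_\mu$ in the forward direction when $B$ is not strong in $M$: the blocking good pair $(A,D)$ need not coincide with $(B,C)$ itself, as Example~\ref{iclosedmatters} already illustrates in a closely related amalgamation context. Pinpointing $(A,D)$ depends essentially on the case analysis of Lemma~\ref{isoqe}, which describes exactly how new good pairs can emerge when $BC^*$ is freely amalgamated with $M$ over a non-strong base. This same phenomenon explains why $\Sigma^2_\mu$ must be formulated as a disjunction over all good pairs $(A,D)$ with $AD \subseteq BC$, rather than the simpler assertion $\chi_M(B,C) = \mu(B,C)$ that would suffice if uniqueness of bases always held.
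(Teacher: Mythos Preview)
Your argument tracks the paper's approach closely: both directions are handled by reducing to good pairs via Lemma~\ref{primchar}, the free amalgam $M\oplus_B BC$ is formed, and Lemma~\ref{isoqe} is invoked to locate the blocking pair inside $BC$. The line-length axiom $\Sigma^3_\mu$ is treated exactly as the paper treats it (through the $\boldsymbol{\alpha}$ case of Lemma~\ref{isoqe}), and your reverse direction is the paper's reverse direction.

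There is, however, one genuine omission. Definition~\ref{ax}(3) does not actually \emph{define} the sentences $\psi_{B,C}$; it only states the semantic property they are to have and explicitly defers the construction to Lemma~\ref{proveax}. The burden of this lemma is therefore not merely to verify the biconditional but to \emph{write down} $\pi_2$-sentences with the required property. The paper does this by fixing, for each pair of good pairs $(D,E)$ and $(B,C)$ with $BC\subseteq DE$, the finitely many possible atomic diagrams $\chi_i(\bar v,\bar x)$ of $H\cup D$, where $H$ collects the portions of $\mu(B,C)+1$ disjoint copies of $C$ that must already lie in $M$; the formula $\varphi_{(D,E),(B,C)}$ is the disjunction of these, and $\psi_{(D,E)}$ is the disjunction over all such $(B,C)$. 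This explicit description is what makes your forward direction go through: you need not only that the free amalgam is bad (which $d$-closedness gives you), but that badness is \emph{witnessed by a first-order formula in $M$ alone}, and that is precisely what the diagram enumeration supplies. Without it, the step ``this $(A,D)$ is precisely the witness the sentence $\psi_{B,C}$ demands'' appeals to a converse of the property in Definition~\ref{ax}(3) that has not yet been established.

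A smaller point: in your reverse direction with $|C|\ge 2$, you should note (via Lemma~\ref{primchar}(2.2)) that the substructure $M\cup C$ of $N$ \emph{is} the free amalgam $M\oplus_B BC$, since no line through $C$ meets $M\setminus B$; this is what lets you apply the semantic content of $\psi_{B,C}$ to the ambient $N$.
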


\begin{proof} We use the notation of Lemma~\ref{isoqe}.
For $M \in \hat{\bK}_\mu$,
we say $M \oplus_D E$ is {\em bad} if for some good pair $(B,C)$ with $BC \subseteq DE$, $\chi_{M \oplus_D E}(B,C) > \mu(B,C)$.

 We first define for each good pair $(D,E)$ the formula $\psi_{(D,E)}$ described  in Definition~\ref{ax}. For each  duo of good pairs $(D,E)$ and $(B,C)$ with $BC \subseteq DE$ 
define the formula $\varphi_{(D,E),(B,C)}$ as follows.
Fix a model $M_0 \in \hat{\bK}_\mu$; choose a copy of $D\subseteq M_0$
such that $M_0 \oplus_D E$ is bad witnessed by $(B,C)$.
If $|C| >1$ choose by Lemma~\ref{isoqe}.B
$C_1,\ldots C_r$ (where $r =\mu(B,C)+1$)  that are disjoint copies of $C$
 over $B$ contained in $M_0 \oplus_D E$ and let $\overline{s}$ enumerate $H =(\bigcup_i C_i)-D) \cap M_0$.
   Let
  $\chi(\overline{v},\overline{x})$ be a possible
   atomic diagram of $H \cup D \subseteq M$, where $\lg(\overline{v}) = \lg(\overline{s})$,  for pairs $(M_0,D)$ as $M_0$ varies over $\hat{\bK}_\mu$ and $D$ varies over possible embeddings into $M_0$.
 Let \begin{equation}\varphi_{(D,E),(B,C)}:
   \bigvee_i (\exists \overline{v})\chi_i(\overline{v},\overline{x})\label{aformula}\end{equation} where the  $\chi_i$ are the finitely many possible such diagrams $\chi$.  We have shown that for any $M \in \hat{\bK}_\mu$ if $M \oplus_D E$ is a {\em bad} extension witnessed by $(B,C)$ then $M\models \psi_{(D,E)(B,C)}$.

    Let $\rho(\overline{x})$ be the atomic
   diagram of $D$.
Now we define $\Sigma^2_\mu$ and $\Sigma^3_\mu$ to assert a) each line has cardinality $\mu(\boldsymbol{\alpha})+2$  and b) each of the following (countable) collection of sentences (for all good pairs $(D,E)$), where $\rho(\overline{x})$ is the atomic
   diagram of $D$.
\begin{equation}\psi_{(D,E)}: \hskip .1in (\forall \overline{x})  [\rho(\overline{x}) \rightarrow \bigvee_{BC \subseteq DE}\varphi_{(D,E),(B,C)}(\overline{x})]. \label{keyformula}\end{equation}

 Now, if $M\models \Sigma_\mu$ then $M$ is $d$-closed. Since if not, there is an $N \in \hat{\bK}_\mu$ for some $(D,E)$, $M \oplus_D E \subseteq N$.
 If $|E|=1$ then condition a) is violated. Suppose $M\models \psi_{(D,E)}$ witnessed by $(B,C)$. If $|C| =1$ condition a) is again violated by Lemma~\ref{isoqe}.A.

 But, if $|C| >1$ some $\chi_i$ from Equation~\ref{aformula} will be satisfied in $M$. And, by Definition~\ref{delta_lemma}.~\ref{defcanam}, $\chi(\overline{v},\overline{x}) \cup \diag_{\rm qf}(E) \models \diag_{\rm qf}(HE)$ where $H$ is, as before, the interpretation of $\overline{v}$. This implies $\chi_{M \oplus_D E}(B,C)\geq \chi_{HE}(B,C)  > \mu(B,C)$ and we finish.
 \end{proof}


%
%

Recall (Definition~\ref{defd-cl}) that a finite set $X$ is $d$-independent when each $x \not\in \ddcl(X -\{x\})$, i.e.\! $d(X) > d(X -\{x\})$ for each $x \in X$.  It is then easy to establish the first of the following assertions by induction and the others follow.

\begin{lemma}\label{basicd} Let $M \in \hat{\bK}_\mu$ and let $Y$ be $d$-independent in $M$. For every finite $X \subseteq Y$ we have:
\begin{enumerate}[(i)]
\item $d(X) = |X|$;
\item $X\leq M$, and so $\mathrm{icl}_M(X) =X$;
\item there are no $R$-relations among elements of $X$.
\end{enumerate}
\end{lemma}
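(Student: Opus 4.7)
The plan is to prove (i) first, by induction on $|X|$, since (ii) and (iii) then follow quickly from it.

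For (i), I would first observe that since $\ddcl$ induces a pregeometry on $M$ (Fact~\ref{geomexists}), every subset of the $d$-independent set $Y$ is again $d$-independent, so for each $x \in X$ one has $d_M(X) > d_M(X - \{x\})$. Since $\delta$ is integer-valued, so is $d_M$, which upgrades this strict inequality to $d_M(X) \geq d_M(X - \{x\}) + 1$. Starting from $d_M(\emptyset) = \delta(\emptyset) = 0$ and inducting on $|X|$ gives $d_M(X) \geq |X|$. The reverse inequality $d_M(X) \leq \delta(X) \leq |X|$ is immediate from the definitions, since $d_M(X)$ is taken as a minimum of values $\delta(W)$ for $W \supseteq X$ (so $d_M(X) \leq \delta(X)$) and each nullity $\mathbf{n}_X(\ell)$ in Definition~\ref{defdelrank} is nonnegative (so $\delta(X) \leq |X|$). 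Hence $d_M(X) = \delta(X) = |X|$.

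For (iii), I would argue by contradiction: if some triple $R(x_1,x_2,x_3)$ held with all $x_i \in X$, then $X$ (viewed as a $\tau$-substructure of $M$) would contain at least one non-trivial line $\ell$ with $|\ell| \geq 3$ based in $X$, contributing $\mathbf{n}_X(\ell) \geq 1$ to the sum in Definition~\ref{defdelrank}. This forces $\delta(X) \leq |X| - 1$, contradicting the equality $\delta(X) = |X|$ obtained in (i).

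For (ii), I would invoke the monotonicity of $d_M$ noted just after Definition~\ref{defd-cl}: for any finite $Z$ with $X \subseteq Z \subseteq M$, one has $\delta(Z) \geq d_M(Z) \geq d_M(X) = |X| = \delta(X)$, so $X \leq M$ in the sense of Definition~\ref{K0def}. The equality $\mathrm{icl}_M(X) = X$ is then immediate, since $X$ is itself strong in $M$ and the intrinsic closure is by definition the smallest such set containing $X$.

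I do not anticipate any real obstacle: the only substantive ingredients are integer-valuedness of the predimension, monotonicity of $d_M$, and the fact that independence is hereditary in a pregeometry, all of which are already established earlier in the paper.
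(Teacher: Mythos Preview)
Your proposal is correct and follows exactly the outline the paper gives: establish (i) by induction on $|X|$, then derive (ii) and (iii) from the equality $d_M(X)=\delta(X)=|X|$. The paper leaves all of these details implicit, so your write-up is in fact a faithful expansion of what the authors had in mind.
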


Using Lemmas~\ref{basicd} and \ref{conclude},
we follow Holland's proof  showing that $\Sigma_\mu$ axiomatizes the complete theory of $\bK^{\mu}_d$ (See Lemma 23 of \cite{Holland5}.).

%

\begin{lemma}\label{axcomp} If $M \models \Sigma_\mu$ then $M \in \bK^{\mu}_{d}$. Moreover, $\Sigma_\mu$ is an axiomatization of the complete theory $T_\mu$ of the class $\bK^\mu_d$.
\end{lemma}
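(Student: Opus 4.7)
The plan is to first dispose of the inclusion $M\models \Sigma_\mu \Rightarrow M \in \bK^\mu_d$, which is already contained in Lemma~\ref{proveax}. For the converse direction---that every $d$-closed model satisfies $\Sigma_\mu$---I would observe that $\Sigma^0_\mu$ and $\Sigma^1_\mu$ are universal and hold throughout $\hat\bK_\mu$ by Definitions~\ref{K0def} and \ref{Kmu}, while $\Sigma^3_\mu$ and the universal-existential axioms $\Sigma^2_\mu$ fall out of Corollary~\ref{getmax}: applied to the isomorphism type $\boldsymbol{\alpha}$ it forces every line to have length $\mu(\boldsymbol{\alpha})+2$, and applied to the good pairs $(A,D)$ identified in the construction of $\psi_{(D,E)}$ it forbids the conflicting configurations. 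Thus $\Sigma_\mu$ axiomatizes the class $\bK^\mu_d$ on the nose.

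For completeness of $T_\mu$, I will run a back-and-forth between arbitrary $\aleph_1$-saturated elementary extensions $M,N\models \Sigma_\mu$. The partial maps in play will be bijections $f\colon A \to B$ with $A \leq M$ and $B \leq N$ both finite; the base case is $f = \emptyset$, legitimate since $\emptyset \leq M,N$ and $d$-independent tuples have the same quantifier-free type in both structures by Lemma~\ref{basicd}. Given such an $f$ and a target $a \in M$, set $C := \mathrm{icl}_M(Aa)$, which is finite because $\delta$ is integer-valued. Decompose $A \leq C$ as a chain $A = A_0 < A_1 < \cdots < A_k = C$ of primitive extensions and extend $f$ one primitive step at a time.

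At each primitive step, Lemma~\ref{primchar} offers two cases. If $A_{i+1} - A_i = \{c\}$ lies on a line $\ell$ already based in $A_i$, then $\Sigma^3_\mu$ forces the corresponding line in $N$ to have length exactly $\mu(\boldsymbol{\alpha}) + 2$; since $f(A_i)$ is finite, $\aleph_1$-saturation yields a fresh point of that line to serve as $f(c)$. If instead $|A_{i+1} - A_i| \geq 2$, then $(B_0, A_{i+1}-A_i)$ is a good pair for a unique $B_0 \subseteq A_i$ (Lemma~\ref{primchar}(2.2)); Corollary~\ref{getmax} applied to $f(B_0) \leq N$ produces $\mu(B_0, A_{i+1}-A_i)$ disjoint copies of the good pair in $N$, and since only boundedly many can intersect $f(A_i)$, saturation supplies one disjoint from it, with Lemma~\ref{getmap} certifying that extending $f$ by any bijection onto it is an isomorphism. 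The principal obstacle will be maintaining the invariant that $f$ is a partial isomorphism between \emph{strong} finite substructures at each step: this is secured by the transitivity and heredity axioms for $\leq$ in Fact~\ref{conclusion_ax} together with the free-amalgamation calculation of Lemma~\ref{delta_lemma}(\ref{canext}), which together ensure that adjoining a primitive extension to a strong substructure preserves strongness. Once the back-and-forth is carried through, $M \backsimeq N$, whence $M \equiv N$, and completeness of $T_\mu$ follows.
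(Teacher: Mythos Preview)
Your back-and-forth has two genuine gaps. First, when you decompose $A \leq \mathrm{icl}_M(Aa)$ into primitive steps, Lemma~\ref{primchar} has \emph{three} cases and you only treat the two $0$-primitive ones (2.1 and 2.2); Case~(1), where $\delta(A_{i+1}/A_i) = 1$ and $A_{i+1} = A_i \cup \{c\}$ with $c$ on no line based in $A_i$, occurs precisely when $a \notin \ddcl_M(A)$. To extend $f$ across such a step you need a point of $N$ that is $d$-independent from $f(A_i)$, and neither $\Sigma_\mu$ nor saturation gives this for free --- it requires a separate argument that $\aleph_1$-saturated models of $\Sigma_\mu$ have infinite $d$-dimension. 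Second, in Case~(2.2) your use of Corollary~\ref{getmax} is off: you only know $f(A_i) \leq N$, not $f(B_0) \leq N$, and even granting the latter the corollary yields exactly $\mu(B_0,C)$ copies over $f(B_0)$, a fixed finite number, so saturation cannot manufacture a fresh one disjoint from $f(A_i)$. The right lever here is the ``moreover'' of Conclusion~\ref{conclude}: with $D = f(A_i) \leq N \in \bK^\mu_d$ and $D \leq f(A_{i+1}) \in \bK_\mu$, the amalgam is forced to be $N$ itself, which embeds $A_{i+1}$ over $f(A_i)$ directly.

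The paper's proof sidesteps both issues by proving $\kappa$-categoricity for $\kappa > \aleph_0$ rather than running a back-and-forth from $\emptyset$. Taking $M, M' \models \Sigma_\mu$ of the same uncountable cardinality, it fixes $d$-bases $X \leq M$ and $X' \leq M'$ via Lemma~\ref{basicd}; since the $d$-closure of a finite set lies in its algebraic closure and is hence countable, $|X| = |X'| = \kappa$, and any bijection $X \to X'$ is an isomorphism of strong substructures carrying no $R$-relations. Because $X$ is already a full basis, every element of $M$ lies in $\ddcl_M(X)$, so every further step is $0$-primitive --- Case~(1) never arises --- and each such step is extended via Conclusion~\ref{conclude}, with no saturation needed.
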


\begin{proof}
By Lemma~\ref{proveax}, it suffices to show $\bK^\mu_d$ is $\kappa$-categorical
for $\kappa> \aleph_0$.
 Suppose now that $M, M'\models \Sigma_\mu$. By $\Sigma^3_\mu$ and by taking elementary extensions we
may assume that both have cardinality $\kappa > \aleph_0$. We show that $\Sigma_\mu$ is $\kappa$-categorical and so complete.
As geometries,
$M$ and $M'$ have bases $X,X'$ of the same cardinality.
By Lemma~\ref{basicd}, $X \leq M$ and $X'\leq M'$ and they are isomorphic by any bijection $f$.
  The  isomorphism  $f$ extends to one between $M$ and $M'$ since $M$ and
   $M'$ are built from
   $X$ and $X'$ by
    a sequence of
     $0$-primitive extensions and each step can be extended by
      Lemma~\ref{conclude}.
      \end{proof}




Having followed the outline of her proof, we have the analog to Holland's result \cite{Holland5} that the   strongly minimal Hrushovski constructions are model complete.

\begin{remark} Since the axioms $\Sigma_\mu$ are universal-existential and $T_\mu$  is $\aleph_1$-\hspace{0.0000000001cm}categorical, it is model complete by Lindstroms's `little theorem': that $\pi_2$-axiomatizable theories that are categorical in some infinite power are model complete \cite{Lindstrom}.
\end{remark}

 Our theories $T_\mu$ uniformize the result that there are only finitely many finite line lengths in any strongly minimal linear space (cf. Fact~\ref{bndlen}). We show in Corollary~\ref{contmu} using Lemma~\ref{2prim} that there are continuum-many strongly minimal theories $T_\mu$ such that in each of them all lines have fixed length $\mu(\boldsymbol{\alpha} +2$.

\begin{corollary}\label{contmu} There are continuum-many $\mu \in \Uscr$ (cf. Definition~\ref{Kmu}(\ref{itemKmu})) which give distinct first-order theories of Steiner systems. That is, there is $\mathcal{V} \subseteq \Uscr$ such that $|\mathcal{V}| = 2^{\aleph_0}$ and $\mu \neq \nu \in \mathcal{V}$ implies that $Th(G_\mu) \neq Th(G_{\nu})$ (recall Notation~\ref{gennot}).
\end{corollary}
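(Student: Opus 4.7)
The plan is to exploit Lemma~\ref{2prim}, which produces an infinite family $\{\boldsymbol{\gamma}_k : k \geq 1\}$ of pairwise non-isomorphic good pairs---where $\boldsymbol{\gamma}_k$ is the type of $(\{a,b\}, C_k)$ arising from an $(a,b)$-cycle of length $4k$---and to vary $\mu$ independently on each $\boldsymbol{\gamma}_k$ while keeping the rest of $\mu$ fixed. Concretely, I will fix a baseline $\nu \in \Uscr$ by choosing some $\nu(\boldsymbol{\alpha}) \geq 1$ (pinning down the common line length so that each generic is a Steiner system via $\Sigma^3_\mu$) and setting $\nu(\beta) = \delta(B)$ on every other good pair $\beta = (B,C)$ with $|C|\geq 2$. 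For each $S \subseteq \{k : k \geq 1\}$ I will define $\mu_S$ to agree with $\nu$ off $\{\boldsymbol{\gamma}_k : k \geq 1\}$ and to satisfy $\mu_S(\boldsymbol{\gamma}_k) = 2$ if $k \in S$ and $\mu_S(\boldsymbol{\gamma}_k) = 3$ otherwise. Since $\delta(\{a,b\}) = 2$, both values meet the constraint in Definition~\ref{Kmu}(1)(i), so $\mu_S \in \Uscr$; the map $S \mapsto \mu_S$ is injective, producing a family $\mathcal{V} \subseteq \Uscr$ of cardinality $2^{\aleph_0}$.

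To separate theories, suppose $S \neq S'$ and pick $k$ where they disagree, say $\mu_S(\boldsymbol{\gamma}_k) = 2$ and $\mu_{S'}(\boldsymbol{\gamma}_k) = 3$. I will consider the first-order $\tau$-sentence $\varphi_k$ expressing ``no pair of points is the base of three pairwise disjoint copies of $C_k$,'' which is well-defined since $C_k$ is finite and its quantifier-free diagram over $\{a,b\}$ can be written down explicitly. By construction, $\varphi_k$ is a direct instance of the schema $\Sigma^1_{\mu_S}$ (Definition~\ref{ax}(2)) applied to $\boldsymbol{\gamma}_k$, so $\varphi_k \in T_{\mu_S}$. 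Conversely, to see $\varphi_k \notin T_{\mu_{S'}}$, I will invoke richness of $\mathcal{G}_{\mu_{S'}}$ (Definition~\ref{def_rich}, Conclusion~\ref{conclude}) to find a strong base $B = \{a,b\} \leq \mathcal{G}_{\mu_{S'}}$---the trivial $2$-element $\tau$-structure lies in $\bK_{\mu_{S'}}$ and embeds strongly over the empty set---whence Corollary~\ref{getmax} gives $\chi_{\mathcal{G}_{\mu_{S'}}}(B, C_k) = \mu_{S'}(\boldsymbol{\gamma}_k) = 3$, contradicting $\varphi_k$. Completeness of $T_\mu$ (Lemma~\ref{axcomp}) then distinguishes the theories.

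The main technical weight sits in this last separation, not in the cardinality count: the schema $\Sigma^1_\mu$ only supplies the one-sided inequality $\chi \leq \mu$, so the proof needs the matching lower bound $\chi \geq \mu$ to be actually attained somewhere in the generic. That lower bound is exactly the content of Corollary~\ref{getmax} at strong bases, and richness delivers a strong $2$-element base. The remaining verifications---that each $\mu_S$ satisfies the positivity constraint of $\Uscr$, that distinct $S$ give distinct $\mu_S$, and that all the $\mathcal{G}_{\mu_S}$ are Steiner systems of the same line length---are immediate bookkeeping.
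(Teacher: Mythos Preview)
Your proposal is correct and takes essentially the same approach as the paper: vary $\mu$ on the family $\{\boldsymbol{\gamma}_k\}$ between the values $2$ and $3$, and separate the resulting theories via the number of disjoint copies of $C_k$ over a two-point base. You spell out the separation more carefully than the paper does---explicitly naming the sentence $\varphi_k$, invoking $\Sigma^1_\mu$ for the upper bound and richness together with Corollary~\ref{getmax} at a strong base for the lower---whereas the paper compresses this into a single line.
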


\begin{proof} For any $X \subseteq \omega$, let $\mu_X$ assert that $\mu(\boldsymbol{\gamma}_k)$ (from the proof of Lemma~\ref{2prim}) is $3$ if $k\in X$ and $2$ if not (recall that it must be at least $2$).
Then, if $k\in X\setminus Y$, then $T_{\mu_X}
\not \equiv  T_{\mu_Y}$ (cf. Notation~\ref{gennot}), since there are three extensions in the isomorphism type $\mu(\boldsymbol{\gamma}_k)$
of some pairs $\{a,b\}$ in models of $T_{\mu_X}$ but not in models of $T_{\mu_Y}$.\end{proof}

%

%
%

\begin{lemma} \label{dclinacl} If $M \in \bK^{\mu}_{d}$, then for every $X \subseteq M$, $\ddcl(X) = \mathrm{acl}_M(X)$.  Thus, $T_\mu$ is strongly minimal.
\end{lemma}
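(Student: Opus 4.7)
The plan is to prove $\ddcl(X) = \acl_M(X)$ by two containments and then derive strong minimality of $T_\mu$ from the fact that $d$ is a dimension function on a pregeometry (Fact~\ref{geomexists}) together with the completeness of $T_\mu$ from Lemma~\ref{axcomp}. I first reduce to finite $X$ by the definition of $\ddcl$ and $\acl$, and I silently replace $X$ by $\icl_M(X)$ (which I will confirm lies in $\acl(X)$ by induction on the length of the primitive chain that builds $\icl_M(X)$ from $X$).

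For $\ddcl(X)\subseteq \acl_M(X)$, take $a\in\ddcl(X)-X$. Then $d_M(Xa)=d_M(X)=\delta(\icl_M(X))$, so there is a finite $Y\supseteq\icl_M(X)\cup\{a\}$ with $\delta(Y)=\delta(\icl_M(X))$ and $\icl_M(X)\leq Y$. Decompose $\icl_M(X)\leq Y$ into a chain of primitive steps; since all $\delta$-increments sum to $0$ and each is in $\{0,1\}$ by Lemma~\ref{primchar}, every step is $0$-primitive. Apply Lemma~\ref{primchar} to the step containing $a$. In Case (2.1), $a$ sits on a line based in the previous stage; by $\Sigma^3_\mu$ this line has size exactly $\mu(\boldsymbol{\alpha})+2$, so $a$ lies in the finite set $\{x:R(b_1,b_2,x)\}$ for any two earlier points on that line, witnessing $a\in\acl(X)$. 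In Case (2.2), $a$ lies in a copy of a good pair $(B_0,A)$ with $B_0$ in the previous stage, and $\chi_M(B_0,A)\leq\mu(B_0,A)<\infty$ by $\Sigma^1_\mu$; thus the existential formula over $B_0$ expressing ``$x$ belongs to some copy of $A$ over $B_0$ realising the fixed isomorphism type'' has at most $\mu(B_0,A)\cdot|A|$ solutions, and again $a\in\acl(X)$.

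For $\acl_M(X)\subseteq\ddcl(X)$, suppose $a\notin\ddcl(X)$, equivalently $d_M(a/X)\geq 1$. Let $Y_0=\icl_M(X)$; then $Y_0\cup\{a\}$ is a strong extension of $Y_0$ in which $a$ is $\delta$-free (no $R$-relation to $Y_0$ is compelled). Since $T_\mu$ is complete (Lemma~\ref{axcomp}) and $\mathcal{G}_\mu\models T_\mu$ is rich, I pass to a sufficiently saturated elementary extension $M^*\succeq M$ and iteratively apply the amalgamation statement of Conclusion~\ref{conclude} to glue countably many disjoint copies $\{a_i:i<\omega\}$ of $Y_0\cup\{a\}$ over $Y_0$ inside $M^*$. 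All the $a_i$ share the quantifier-free type of $a$ over $Y_0$; by the model completeness that follows from $\pi_2$-axiomatizability and $\aleph_1$-categoricity (Lindström's theorem), they share the full first-order type, so $a$ has infinitely many conjugates over $X$ in $M^*$, contradicting $a\in\acl(X)$.

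Finally, strong minimality: given any formula $\varphi(x,\bar b)$ over $M\models T_\mu$, each realisation $c$ of $\varphi$ has $d_M(c/\bar b)\in\{0,1\}$ because $d$ takes integer values and the single element $c$ contributes at most $1$. The realisations with $d(c/\bar b)=0$ form a subset of $\ddcl(\bar b)=\acl(\bar b)$, so if they are all of $\varphi(M,\bar b)$ the set is finite. Otherwise some realisation $c^*$ satisfies $d(c^*/\bar b)=1$; any other element $c'\in M$ with $d(c'/\bar b)=1$ has the same quantifier-free type as $c^*$ over $\icl(\bar b)$ (by Lemma~\ref{basicd} and uniqueness of the strong one-point extension), hence by completeness of $T_\mu$ the same first-order type, so $c'$ also satisfies $\varphi$. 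Thus the complement of $\varphi(M,\bar b)$ is contained in $\acl(\bar b)$ and is finite. The hardest ingredient is producing the infinitely many conjugates in the reverse containment for an \emph{arbitrary} $M\in\bK^\mu_d$ rather than just for the generic; this is exactly where I need the equivalence between richness (enjoyed by $\mathcal{G}_\mu$) and $d$-closedness (enjoyed by every model of $T_\mu$) provided by Proposition~\ref{dcl} and Lemma~\ref{axcomp}.
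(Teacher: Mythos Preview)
Your forward containment $\ddcl(X)\subseteq\acl_M(X)$ is correct and essentially identical to the paper's: decompose into a chain of $0$-primitive steps and bound each by $\mu$.

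The genuine problem is in the reverse containment. You write that once the $a_i$ share the quantifier-free type of $a$ over $Y_0$, ``by the model completeness that follows from $\pi_2$-axiomatizability and $\aleph_1$-categoricity (Lindstr\"om's theorem), they share the full first-order type.'' Model completeness does \emph{not} say that quantifier-free types determine complete types; it says that types are determined by \emph{existential} formulas. To transfer an existential witness for $\exists\bar y\,\psi(Y_0,a_1,\bar y)$ to one for $\exists\bar y\,\psi(Y_0,a_2,\bar y)$ you must embed $\icl_{M^*}(Y_0 a_1 \bar c)$ over $Y_0 a_2$ inside $M^*$, and that is exactly a richness step (a back-and-forth move), not a consequence of model completeness. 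Your final sentence invokes ``the equivalence between richness and $d$-closedness,'' but Proposition~\ref{dcl} only gives rich $\Rightarrow$ $d$-closed; the paper never states the converse, and the ``moreover'' in Conclusion~\ref{conclude} (which would give it) is about $0$-primitive amalgams, whereas here $\delta(a/Y_0)=1$. So as written, the step from same qf-type to same type is a gap. The same gap recurs in your strong-minimality paragraph when you assert that any $c'$ with $d(c'/\bar b)=1$ has the same type as $c^*$.

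The paper closes this gap differently and more economically: pick a maximal $d$-independent $X_0\subseteq X$, extend $X_0\cup\{a\}$ to a $d$-basis of $M$, and use the fact (already established inside the proof of Lemma~\ref{axcomp}) that any permutation of a $d$-basis extends to an automorphism of $M$. Swapping $a$ with other basis elements gives infinitely many $X_0$-conjugates of $a$, hence $a\notin\acl(X_0)$; since $X\subseteq\ddcl(X_0)\subseteq\acl(X_0)$ by the forward containment, $a\notin\acl(X)$. This uses the categoricity argument directly rather than routing through model completeness, and avoids both the saturated-extension detour and the need to justify richness of arbitrary $d$-closed models.
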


\begin{proof} We first show that for $M \in \hat \bK_\mu$ the left hand side is contained in the right. If $Y$ is a finite subset of $M$,$\delta(Y/X) =0$, $Y$ is a union of a finite chain with length $k< \omega$ of extensions by good pairs $(B_i,C_i)$;
each is realized by at most $\mu(B_i,C_i)$ copies, and so:
$$|Y|\leq \sum_{i<k}\ \mu(B_i,C_i) \times |C_i|.$$
Thus, $Y \subseteq \mathrm{acl}_M(X)$.

\smallskip
\noindent
Concerning the other containment, let $M \in  \bK^\mu_d$, $a \in M$ and $X \subseteq_\omega M$. If $d(a/X) > 0$ and $X_0$ is a maximal $d$-independent subset of $X$, then $X_0 \cup \{a\}$ extends to a $d$-basis for $M$. Furthermore, in the proof of Lemma~\ref{axcomp}, we observed that any permutation of a $d$-basis extends
to an automorphism of $M$. Thus, if $a\notin \ddcl(X)$ , then $a \notin \mathrm{acl}_M(X)$. Hence, $\ddcl(X) = \mathrm{acl}_M(X)$, as desired.

\smallskip
\noindent
 Strong minimality follows, since for any finite $A$ there is a unique non-algebraic $1$-type over $A$, namely the type $p$ of a point $a$ such that: (i) $a$ is  not on any line based in $A$ (and so $\delta(a/A) =1$); (ii) $Aa$ is strong in any model. Clause (ii) is given by the collection of universal sentences forbidding any $B \supseteq Aa$ with $\delta(B) <\delta(Aa)$. Thus, in $\Gscr_\mu$ we have that $d(a/A) = 1$ for any $a$ realizing $p$.  Hence, any two realizations $a$ and $b$ of $p$ are such that $Aa \leq \Gscr$ and $Ab \leq \Gscr$, and thus they are automorphic by the genericity of $\Gscr_\mu$ (cf. Conclusion~\ref{conclude}). Hence, $p$ is a complete type.
 \end{proof}




\begin{notation}\label{fanoset}
 Let $F$ be the Fano plane and $\Fscr$ be the set of $\mu \in \mathcal{U}$ such that:
$$\mu(\emptyset, F) > 0.$$
\end{notation}

Lemma~\ref{nlf} shows that  for any $\mu \in \Fscr$ and $M \models T_\mu$, we have that $\mathrm{acl}_M(\emptyset)$ is infinite; by Ryll-Nardjewski, $T_\mu$ is not $\aleph_0$-categorical. In view of Lemma~\ref{dclinacl}, the countable models correspond exactly to the models of dimension $\alpha$ for $\alpha \leq \aleph_0$.


\begin{lemma}\label{nlf} Let $\mu \in \Fscr$. Neither the generic, $\Gscr_\mu$, nor any model of $T_\mu$  is locally finite with respect to $\ddcl = \mathrm{acl}$ (cf. Lemma~\ref{dclinacl}). Thus, $T_\mu$ is not $\aleph_0$-categorical and has $\aleph_0$ countable models.
Since the generic has infinite dimension, it is $\omega$-saturated.
\end{lemma}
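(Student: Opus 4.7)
The plan is to show that $\mathrm{acl}$ is not locally finite by exhibiting a finite set with infinite algebraic closure, and then derive the remaining statements from standard facts about strongly minimal theories. Since $\mu(\emptyset,F)>0$ and $\emptyset\leq M$ trivially, Corollary~\ref{getmax} applied to every $M \models T_\mu$ gives $\chi_M(\emptyset,F)=\mu(\emptyset,F)>0$, so $M$ contains a Fano plane whose seven points form a finite $\emptyset$-definable set and hence lie in $\mathrm{acl}_M(\emptyset)$.

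The combinatorial crux is a bound on finite Steiner $k$-systems in $\bK_0$, where $k=\mu(\boldsymbol{\alpha})+2\geq 3$ is the common line length in $M$ by $\Sigma^3_\mu$. A Steiner $k$-system on $v$ points has $\binom{v}{2}/\binom{k}{2}$ lines of nullity $k-2$, so $\delta(v,k) = v - v(v-1)(k-2)/(k(k-1))$; since substructures of $M$ lie in $\hat\bK_0$ and therefore satisfy $\delta\geq 0$, one obtains $v \leq 1 + k(k-1)/(k-2)$, a bound equal to $7$ for $k\in\{3,4,5\}$ and growing only like $k+O(1)$ in general.

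I then split on $k$. For $k\geq 4$, a single step of subspace closure already adjoins the $k-3$ extra points on each of the seven Fano lines, giving $|\mathrm{cl}_R^M(F)|\geq 7(k-2)$, which exceeds the above bound; since any finite subspace closure is closed under the length-$k$ lines of $M$ and hence is a Steiner $k$-system in $\bK_0$, $\mathrm{cl}_R^M(F)$ must be infinite, and being contained in $\mathrm{acl}_M(\emptyset)$ we are done. For $k=3$, the Fano plane is already subspace-closed, so I pick any $a\in M\setminus F$ (which exists as $M$ is infinite) and apply the same argument to $\mathrm{cl}_R^M(F\cup\{a\})$: if finite, it is a sub-Steiner-triple-system of size $\geq 8$, hence $\geq 9$ by the divisibility $v\equiv 1,3\pmod 6$, and $\delta(9,3)=-3<0$, a contradiction. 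So $\mathrm{cl}_R^M(F\cup\{a\})\subseteq\mathrm{acl}_M(\{a\})$ is infinite.

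Having established that $\mathrm{acl}_M$ is not locally finite, Ryll-Nardzewski immediately yields that $T_\mu$ is not $\aleph_0$-categorical. By strong minimality (Lemma~\ref{dclinacl}), each countable model of $T_\mu$ is determined up to isomorphism by its $d$-dimension, which ranges over $\{0,1,2,\dots,\aleph_0\}$, giving exactly $\aleph_0$ countable models. Finally, $\Gscr_\mu$ has infinite $d$-dimension by richness (Conclusion~\ref{conclude}), and in a strongly minimal theory a countable model of dimension $\aleph_0$ realizes every complete type over every finite parameter set, so $\Gscr_\mu$ is $\omega$-saturated. The main obstacle is the case split on $k$ and the rigorous verification that every finite subspace closure of $F$ (or $F\cup\{a\}$) in $M$ is automatically a Steiner $k$-system subject to the $\delta$-bound.
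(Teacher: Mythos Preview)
Your argument is correct and takes a genuinely different route from the paper's.

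The paper proves the stronger assertion that $\mathrm{acl}_M(\emptyset)$ is infinite by an explicit inductive construction: starting with a copy $A_0$ of the Fano plane in $M$, it fixes three vertices $a_0,b_0,c_0$ and adjoins three new points $a_1,b_1,c_1$ with $R(a_0,a_1,c_1)$, $R(b_0,b_1,c_1)$, $R(a_1,b_1,c_0)$, checks that $A_1=A_0\cup\{a_1,b_1,c_1\}$ is a $0$-primitive extension of $A_0$, and iterates. Each step is realised strongly in the generic, so $d(A_i/\emptyset)=0$ for all $i$, giving an explicit infinite subset of $\mathrm{acl}(\emptyset)$.

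Your approach replaces this hand-built tower by a global counting bound: a finite Steiner $k$-system sitting inside a model of $\hat\bK_0$ must satisfy $\delta\geq 0$, forcing $v\leq 1+k(k-1)/(k-2)$, while already the first step of subspace closure of the Fano configuration produces $7(k-2)$ points for $k\geq 4$. This is slick and avoids the ad hoc configuration, and for $k\geq 4$ it also yields $\mathrm{acl}(\emptyset)$ infinite, since $\mathrm{cl}_R^M(F)\subseteq\mathrm{acl}_M(\emptyset)$. The only place your argument is weaker is the case $k=3$: there the Fano plane is already $\mathrm{cl}_R$-closed, and your use of an auxiliary point $a\notin F$ establishes only that $\mathrm{acl}_M(\{a\})$ is infinite. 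That suffices for the lemma as stated (non-local-finiteness, hence failure of $\aleph_0$-categoricity via Ryll--Nardzewski), but note that the paper later invokes Lemma~\ref{nlf} to obtain $\mathrm{acl}(\emptyset)$ infinite for weak elimination of imaginaries (Fact~\ref{thehammer}(3)); for that application in the $k=3$ case one needs the paper's constructive chain rather than your counting argument.
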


\begin{proof} We show that the algebraic closure of the empty set is infinite. Construct a sequence $( A_i : i<\omega)$ in $\Gscr_\mu$ by
letting $A_0$ to be the Fano plane, which (Example~\ref{fano}) is easily seen to be $0$-  primitive over the empty set. Notice that there can only be finitely many realizations of the Fano plane in any model of $T_\mu$, and so $A_0$ is in the algebraic closure of the empty set. Now let $a_0,b_0,c_0$ be the vertices of the triangle in the standard picture of the Fano plane. Choose $a_1,b_1,c_1$ disjoint from $A_0$ so that
 $(a_0,a_1,c_1)$, $(b_0,b_1,c_1)$,  and $(a_1,b_1,c_0)$ are triples of collinear points. Then, letting $A_1 = \{a_0,b_0,c_0,a_1,b_1,c_1 \}$, it is to see that $A_1$ is a primitive
 extension of $A_0$.  Now  build $A_2$ by taking $a_1,b_1,c_1$ as the base and adding $a_2,b_2,c_2$ as in the construction of $A_1$ from $A_0$; and then iterate.
 Each stage (and hence the union)
can be strongly embedded as $A'_i$ in the generic. But then $\delta(A'_{i+1}/A'_i) = d(A_{i+1}/A_i) = 0$. By transitivity, with $A_\omega$ denoting $\bigcup_{i < \omega} A_i$, we have that for any finite $X\subseteq A_\omega$,
$d(X/A_0) = 0$. Since $\ddcl = \mathrm{acl}$ (Lemma~\ref{dclinacl}), we finish.
We constructed this sequence in the algebraic closure of the empty set, and so
it occurs in the prime model of $T$.
Thus, $\mathrm{acl}_M(\emptyset)$ is infinite for any model $M$ of $T_\mu$. By Ryll-Nardjewski, $T_\mu$ is not $\aleph_0$-categorical.
In view of Lemma~\ref{dclinacl}, as in any strongly minimal theory, these models correspond exactly to models of dimension $\alpha$ for $\alpha \leq \aleph_0$.
\end{proof}

\section{Further Context}\label{fcon}

In this section we place our work in the context of further work on the model theory of Steiner systems/linear spaces, studies on the consequences of flat geometries, and Hrushovski constructions.

\begin{remark}\label{onbc} {\rm We compare our examples with the construction in \cite{BarbinaCasa}  of  structures existentially closed for the class of all Steiner quasigroups.  Note that Steiner quasigroups are the quasigroups associated with  Steiner {\em triple} systems in \cite{BarbinaCasa}.}
\begin{enumerate}[(i)]
{\rm \item Their generic, denoted $\mathbb{M}_{\rm sq}$, has continuum many types over the empty set,  satisfies $TP_2$ and $NSOP_1$, and it is locally finite (but not uniformly locally finite) as a quasigroup. If $\mu \in \Uscr$, then it is obvious that $T_\mu$ fails the first three of these properties since it is strongly minimal.
Furthermore, we showed in Lemma~\ref{nlf} that our examples with $\mu \in \Fscr$ are not locally finite for $\mathrm{acl} =\ddcl$.
Strikingly, in $\mathbb{M}_{\rm sq}$, the definable closure is equal to the
 algebraic closure ($\dcl = \mathrm{acl}$). In  \cite{BaldwinsmssII} we show that this equality fails drastically in any $T_\mu$ with $\mu \in \Uscr$. }
{\rm \item The structure $\mathbb{M}_{\rm sq}$  is the prime model of its theory; our $\Gscr_\mu$ is saturated.
While the example in \cite{BarbinaCasa} is quantifier eliminable, ours is only model complete. The first is the model completion of the universal theory of
Steiner quasigroups.  Since each $M \in \bK_\mu$ can be extended to $N \in \bK^\mu_d$, the second is the model completion of the universal theory of $\hat \bK_\mu$ for the relevant $\mu$. Quantifier elimination does not follow since, despite the limited amalgamation in Conclusion~\ref{conclude},
$\hat \bK_\mu$ does not have amalgamation. }
{\rm \item In the introduction we mentioned further results on the combinatorics of strongly minimal Steiner systems and strongly minimal quasigroups from \cite{BaldwinsmssII}, and compared our approach with that of \cite{BarbinaCasa, ConKr, HyttinenPaolinifree}. }
\end{enumerate}
\end{remark}

We isolate for strongly minimal sets the  following facts scattered in the literature, often in more generality, and then apply them to show the connection with Zilber's conjecture. 
%

\begin{fact}\label{thehammer} Let $T$ be a strongly minimal theory.
\begin{enumerate}[(1)]
\item\label{flatnogroup} \hspace{-0.1cm} {\rm \cite[Lemma 14+remark just after]{Hrustrongmin}}
If the $\mathrm{acl}$-geometry of $T$ is flat, then
$T$ does not interpret an infinite group and $T$ is CM-trivial.
\item \hspace{-0.1cm} {\rm \cite[Theorem 5.1.1]{pillaybook2}} If the $\mathrm{acl}$-geometry of $T$ is locally modular and non-trivial, then
$T$ interprets an infinite group.
%
%
\item \hspace{-0.1cm} {\rm \cite[Lemma 1.6]{Pillayacf}} If $\mathrm{acl}(\emptyset)$ is infinite in $T$, then $T$ admits weak elimination of imaginaries.
\end{enumerate}
\end{fact}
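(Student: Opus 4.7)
The plan is to handle each clause as a separate appeal to established geometric stability theory, since this is a ``Fact'' collecting results from the literature; the job is to spell out the argument that invokes each cited source.

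For (1), I would follow Hrushovski's dimension calculation from \cite{Hrustrongmin}. Assume for contradiction that $T$ interprets an infinite group $G$, pick a generic independent pair $a, b \in G$, and consider the three $d$-closed sets $F_1 = \acl(a)$, $F_2 = \acl(b)$, $F_3 = \acl(ab)$. These are pairwise of the same dimension as $\acl(\emptyset)$ when intersected, yet each pair of them generates the same set $\acl(a,b)$ of dimension $2$. Plugging $F_1, F_2, F_3$ into the flatness inequality $(*)$ of Definition~\ref{allflatdef}(2) yields $2 \leq 1 + 1 + 1 - 1 - 1 - 1 + 0 = 0$, a contradiction. CM-triviality is a similar but slightly more delicate computation: one shows that whenever $A \subseteq B$ are $d$-closed and $\tp(\abar/B)$ does not fork over $A$, the canonical base of $\tp(\abar/B)$ lies in $\acl(A)$; flatness applied to the relevant lattice of $d$-closed sets forces this inclusion, which is exactly the CM-triviality condition.

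For (2), invoke the Hrushovski--Zilber group configuration theorem. The plan, as fully developed in \cite[Theorem 5.1.1]{pillaybook2}, is to exhibit from a non-trivial locally modular pregeometry a generic ``group configuration'' (a six-tuple witnessing the interalgebraicity pattern of three generic points and three generic lines through them) and then to recover an interpretable infinite group from the configuration by the standard coordinatization argument.

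For (3), follow Pillay's argument in \cite{Pillayacf}. Given an imaginary element $e$, choose a finite real tuple $\abar$ of minimal Morley rank with $e \in \dcl(\abar)$, and consider the finite orbit $\{\abar_1, \ldots, \abar_n\}$ of $\abar$ under the stabilizer of $e$. The plan is to code this finite set by some real tuple $\overline{c}$ interalgebraic with $e$; the infinitude of $\acl(\emptyset)$ provides enough parameter-free distinguishable algebraic elements to assemble such a coding in a strongly minimal environment, yielding weak elimination of imaginaries.

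The main obstacles are conceptual rather than computational. The subtlest is the CM-triviality half of (1), which requires setting up the flatness inequality on the right chain of $d$-closed sets controlling the canonical base; (2) is substantial but is invoked here as a black box; and (3) is essentially an exercise in strong minimality once one has the infinite parameter-free algebraic closure in hand.
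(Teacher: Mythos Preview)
The paper does not prove this Fact; it is stated purely as a list of citations, so there is no in-paper argument to compare against. Your outlines of (2) and (3) are acceptable high-level pointers to the cited black boxes.

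Your sketch of (1), however, contains two genuine errors. First, the three-set computation fails. You say the pairwise intersections $F_i\cap F_j$ have the dimension of $\acl(\emptyset)$ but then subtract $1$ for each; with the correct value $0$ the inequality reads $2\le 1+1+1-0-0-0+0=3$, which is no contradiction. This is not merely an arithmetic slip: the configuration $\acl(a),\acl(b),\acl(ab)$ never violates the flatness inequality in any group---check it in a strongly minimal vector space. Hrushovski's argument needs a larger configuration (and some care in passing from $T^{\mathrm{eq}}$ to the home sort); for instance, with $a,b,c$ independent generics in an infinite abelian subgroup, the four sets $\acl(a,b)$, $\acl(a,c)$, $\acl(b,c)$, $\acl(a{+}b,\,a{+}c)$ have pairwise intersections of dimension $1$ and triple intersections of dimension $0$, so flatness would force $3\le 4\cdot 2-6\cdot 1=2$.

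Second, what you describe as CM-triviality---that the canonical base of $\tp(\abar/B)$ lies in $\acl(A)$ whenever $\tp(\abar/B)$ does not fork over $A$---is actually one-basedness (local modularity), which flat geometries need not satisfy. CM-triviality is the weaker assertion that $\mathrm{Cb}(\abar/A)\subseteq\acl(\mathrm{Cb}(\abar/B))$ whenever $A\subseteq B$ are algebraically closed with $\acl(\abar A)\cap B=A$.
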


We modify \cite[Lemma 15]{Hrustrongmin} to show our examples have  the characteristic properties of the {\em ab initio} Hrushovski construction.

\begin{conclusion}\label{getflat} For any $\mu \in \mathcal{U}$,
the $\acl$-pregeometry associated with $T_\mu$ is flat (Definition~\ref{defflat}. Thus,
\begin{enumerate}[(1)]
\item\label{flatnogroup} \hspace{-0.1cm}
$T$ does not interpret an infinite group and $T$ is CM-trivial.
%

%
%
\item \hspace{-0.1cm}  If, further, $\mu \in \Fscr$, $T$ has weak elimination of imaginaries.
\end{enumerate}
\end{conclusion}

\begin{proof} Fix $M \models T_\mu$. By Lemma~\ref{dclinacl}, $\acl$ is the same as $\cl_d$. We use the notation of Definition~\ref{allflatdef}  and start with  $\acl$-closed subsets $E_i\leq M$ of finite dimension for $i$ in the finite set $I$.
For flatness, for each $\emptyset \neq S\subseteq I$, let $E_S = \bigcap_{i\in S} E_i$; let  $\check E_S$,  be a finite base for $E_S$.    That is, $\check E_S \leq E_S \leq M$ and $\mathrm{cl}_d(\check E_S) = E_S$.  For $i\in I$, let $F_i = \icl(\bigcup_{i \in S\subseteq I } \check E_S)$. Then, as usual, for $S \subseteq I$ let $F_S = \bigcap_{i\in S} F_i$
and $F_{\emptyset} = \bigcup_{i\in I} F_i$.
Now we have the following

$$E_S = \bigcap_{i\in S} E_i = \acl(\check E_S) = \acl(F_S).$$
The first two equalities are immediate from the definitions. $\check E_S$ is clearly a subset of $F_S$ since $i\in S$ implies $\check E_S \subseteq F_i$. Finally examination of the definitions of $F_i$ and $F_S$ shows $E_S \subseteq \acl(F_S)$ for each $\emptyset \neq S \subseteq I$.
Since $\delta(F_S) =d(F_S)$ and $\delta$ is flat by Lemma~\ref{delta_lemma}.1 applied to the $F_i$ and $F_S$, lifting by  $d(F_S) = d(E_S)$, we have that $d$ is flat.

The consequences follow from Fact~\ref{thehammer}.
\end{proof}

\bibliography{ssgroups}{}
\bibliographystyle{alpha}

\end{document}